\documentclass{amsart}
\usepackage{amsmath,amssymb,amsthm,mathtools}
\usepackage{enumerate}
\usepackage{xcolor}
\usepackage[colorlinks,citecolor=blue,urlcolor=blue,linkcolor=blue]{hyperref}
\usepackage{microtype}
\usepackage{etoolbox}

\usepackage{tikz-cd}
\usepackage{tikz}
\usepackage{marginnote}
\definecolor{mygray}{gray}{0.85}
\usepackage[linecolor=black, backgroundcolor=mygray,colorinlistoftodos,prependcaption,textsize=small]{todonotes}
\usepackage{xargs}  
\usepackage{xcolor}

\newcommand{\mrm}[1]{\mathrm{#1}}

\renewcommand{\leq}{\leqslant}
\renewcommand{\geq}{\geqslant}

\newcommand{\Aut}{\operatorname{Aut}}
\newcommand{\Inn}{\operatorname{Inn}}

\newcommand{\ASpe}{\operatorname{AutSpe}}
\newcommand{\ISpe}{\operatorname{InnSpe}}
\newcommand{\FAut}{\operatorname{FAut}}
\newcommand{\FInn}{\operatorname{FInn}}
\newcommand{\FAutfin}{\operatorname{FAut}_{\mathrm{fin}}}
\newcommand{\FInnfin}{\operatorname{FInn}_{\mathrm{fin}}}
\newcommand{\PartConj}{\operatorname{PartConj}}
\newcommand{\PartConjsc}{\operatorname{PartConj}_{\mathrm{sc}}}
\newcommand{\Spe}{\operatorname{Spe}}
\newcommand{\ad}{\operatorname{ad}}
\newcommand{\supp}{\operatorname{supp}}
\newcommand{\first}{\operatorname{first}}
\newcommand{\last}{\operatorname{last}}
\newcommand{\link}{\operatorname{link}}
\newcommand{\Pc}{\operatorname{Pc}}
\newcommand{\esupp}{\operatorname{esupp}}

\newcommand{\PStab}{\operatorname{PStab}}
\newcommand{\N}{\mathbb N}
\newcommand{\Z}{\mathbb Z}
\newcommand{\cG}{\mathcal G}

\newcommand{\cl}[1]{\overline{#1}}

\AtBeginEnvironment{proof}{%
  \setlength{\parindent}{0pt}%
  \setlength{\parskip}{\smallskipamount}%
}

\numberwithin{equation}{section}

\theoremstyle{plain}
\newtheorem{theorem}{Theorem}[section]
\newtheorem{proposition}[theorem]{Proposition}
\newtheorem{lemma}[theorem]{Lemma}
\newtheorem{corollary}[theorem]{Corollary}
\newtheorem{fact}[theorem]{Fact}

\theoremstyle{definition}
\newtheorem{definition}[theorem]{Definition}
\newtheorem{observation}[theorem]{Observation}
\newtheorem{notation}[theorem]{Notation}
\newtheorem{remark}[theorem]{Remark}

\title[Acylindrical hyperbolicity and pure conjugating automorphisms]{Acylindrical hyperbolicity and pure conjugating automorphisms in graph products of groups}

\author[G. Paolini]{Gianluca Paolini}
\address{Department of Mathematics ``Giuseppe Peano'', University of Torino, Via Carlo Alberto 10, 10123 Torino, Italy}
\email{gianluca.paolini@unito.it}

\author[J. Rabideau]{Jean-Luc Rabideau}
\address{Department of Mathematics, Vanderbilt University, 1326 Stevenson Center Ln, Nashville, TN 37240, USA}
\email{jean-luc.p.rabideau@vanderbilt.edu}

\thanks{Research of Gianluca Paolini was supported by project PRIN 2022 ``Models, sets and classifications'', prot. 2022TECZJA, and by INdAM Project 2024 (Consolidator grant) ``Groups, Crystals and Classifications''.}
\thanks{Research of Jean-Luc Rabideau was supported by the National Science Foundation Graduate Research Fellowship Program under Grant No. 2444112.  Any opinions, findings, and conclusions or recommendations expressed in this material are those of the author(s) and do not necessarily reflect the views of the National Science Foundation.}

\date{September 1, 2026}

\begin{document}

\begin{abstract}
We study graph products of groups over defining graphs of arbitrary
cardinality from two closely related viewpoints.  First, we characterize
acylindrical hyperbolicity.  If the defining graph is irreducible, has at
least two vertices, and has a finite star base, then every parabolically
full subgroup is either virtually cyclic or acylindrically hyperbolic.  For
vertex-full subgroups the finite star base condition is also necessary.  In
particular, the graph product itself is acylindrically hyperbolic exactly
when the graph has a finite star base and the group is not virtually cyclic,
or equivalently is not the infinite dihedral group.  We also give the
corresponding classification for reducible defining graphs.  Second,
motivated by our earlier work on
countable right-angled Coxeter groups, we study pure conjugating
automorphisms in the topology of pointwise convergence.  We prove that they
are topologically generated by finite-support factorwise automorphisms and
partial conjugations, and establish
closedness, density, exact-equality, and discreteness results.  The same
finite star base condition that appears in the acylindrical hyperbolicity
criterion governs closedness and discreteness in the star-connected case;
for arbitrary graphs, its natural refinement is the existence of a finite
component witness set.
Finally, for graph products of abelian groups we
obtain a canonical topological semidirect decomposition of the pure
\mbox{conjugating automorphism group.}
\end{abstract}

\maketitle
\setcounter{tocdepth}{1}
\tableofcontents

\section{Introduction}

Graph products of groups were introduced by Green in \cite{Green1990}.  Let
$\Gamma$ be a simplicial graph and let
$\cG=(G_v)_{v\in V(\Gamma)}$ be a family of non-trivial groups.  The graph
product $G=\Gamma\cG$ is obtained from the free product of the vertex groups
by imposing commutation between elements of $G_u$ and $G_v$ whenever $u$ and $v$ are
adjacent.  Graph products interpolate between free and direct products and
include, as the two most familiar uniform examples, right-angled Artin
groups and right-angled Coxeter groups.

\smallskip
This paper grew directly out of our previous work
\cite{PaoliniRabideau2026}.  There, in order to realize non-Archimedean
Polish groups as outer automorphism groups, we studied the pointwise
convergence topology on automorphism groups of countable right-angled
Coxeter groups.  Tits' decomposition of $\mrm{Aut}(W)$, for $W$ a right-angled Coxeter group, reduced an important part of that
problem to understanding when the special automorphism group coincides with
the inner automorphism group.  Our approach was to separate that equality
into two topological questions, density and closedness, and to translate both of these questions into graph-theoretic properties.  The density part
was governed by so-called {\em star-connectedness}, while the closedness part led us to
introduce what we called {\em the finite star base condition}.  The present project asks how far
that mechanism extends from right-angled Coxeter groups to arbitrary graph
products, without countability assumptions.  This leads naturally to two
questions, one geometric and one topological, namely:
\begin{enumerate}[(1)]
\item when is $G=\Gamma\cG$ acylindrically hyperbolic (for $\Gamma$ of arbitrary cardinality)?
\item what is the pointwise-topological structure of the automorphisms that
preserve the conjugacy class of each specified vertex group?
\end{enumerate}

At first these questions appear unrelated.  Their connection is the main
organizing principle of the paper.  For $v \in V(\Gamma)$, write
\[
 N^*(v)=\{v\}\cup\{u\in V(\Gamma):uR_\Gamma v\},
 \qquad
 Z_\Gamma=\{v\in V(\Gamma):N^*(v)=V(\Gamma)\}.
\]
A subset $A\subseteq V(\Gamma)$ is a \emph{star base} if
$\bigcap_{a\in A}N^*(a)=Z_\Gamma$, and it is a \emph{finite star base} when
$A$ is finite\footnote{Notice that here $A$ is allowed to be empty: with the convention that
the empty intersection is $V(\Gamma)$, the empty set is a star base exactly
when every vertex is universal, or equivalently when $\Gamma$ is complete;
compare \cite{PaoliniRabideau2026}.}.  Thus a finite star base gives finitely
many witnesses to the
failure of a non-universal vertex to commute with the whole graph product.
We call $\Gamma$ \emph{irreducible} if its complement graph is connected,
and \emph{star-connected} if $\Gamma\setminus N^*(v)$ has at most one
connected component for every $v \in V(\Gamma)$.  Here a \emph{star} means
a set of the form $N^*(v)$.  Finally, a \emph{component witness set}
is a set meeting every connected component of every graph
$\Gamma\setminus N^*(v)$.

These notions clarify both the connection and the division of labour among
the results.  In the irreducible case, a finite star base is exactly the
finite combinatorial datum needed to obtain acylindrical hyperbolicity.  On
the automorphism side, partial conjugations can act independently on the
components of $\Gamma\setminus N^*(v)$.  Consequently, in order to force a
pure conjugating automorphism which preserves finitely many vertex groups to
preserve every vertex group, the chosen finite set of vertices must meet
each such component.  This is precisely the stronger requirement that the
graph admit a finite component witness set.  Every component witness set is
a star base, and on
a star-connected graph the converse holds.  Consequently, in the
star-connected case the same finite star base condition governs both the
geometric classification and the closedness and discreteness phenomena for
pure conjugating automorphisms.
Star-connectedness itself prevents different components
of a deleted star from carrying independent partial conjugations, and hence
turns density of partial conjugations and factorwise automorphisms into density of inner and factorwise
automorphisms.
We now state the main results of our paper in the two directions separately and then
return to their common finite conditions.

\subsection{Acylindrical hyperbolicity}
The class of acylindrically hyperbolic groups, defined by Osin in \cite{Osin2016}, is a wide class of groups exhibiting features of negative curvature. This class is broad enough to include many classes of groups admitting natural actions on hyperbolic spaces, including all non-elementary
hyperbolic and relatively hyperbolic groups, while retaining many strong structural properties.

Recall that a subgroup of a graph product $G$ is called \textit{parabolic}
if it is a conjugate of a subgroup generated by the groups
$\{G_v\}_{v \in T}$ for some $T \subseteq V(\Gamma)$.  A subgroup is called
\textit{parabolically full} if it is not contained in a proper parabolic
subgroup of $G$.
Minasyan--Osin \cite[Theorem~2.12]{MinasyanOsin2015} proved that, for a
finite irreducible defining graph with at least two vertices, every parabolically full subgroup is either virtually
cyclic or acylindrically hyperbolic.  Every finite graph has a finite star
base, and our first main result gives the same conclusion for arbitrary
defining graphs after replacing finiteness by having a finite star base.

\begin{theorem}\label{thm:intro-ah}
Let $G=\Gamma\cG$ be a graph product of non-trivial groups over an
irreducible graph $\Gamma$ with at least two vertices, and suppose $\Gamma$ has a finite star base. Suppose that $H \leq G$ is parabolically full. Then $H$ is either virtually cyclic or acylindrically hyperbolic.
\end{theorem}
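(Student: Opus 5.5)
Since the statement is known for finite graphs by Minasyan--Osin \cite[Theorem~2.12]{MinasyanOsin2015}, I will reduce to their setting by finding one loxodromic element acting as a WPD element, and then invoke Osin's criterion \cite{Osin2016}. The acylindrical action I will use is the action of $G$ on the contact graph / extension graph of the graph product; equivalently, one can use the Minasyan--Osin tree-like actions from splittings $G = G_{\Gamma\setminus\{v\}} *_{G_{\link(v)}} G_{N^*(v)}$. The criterion is: if $H$ is not virtually cyclic and contains an element that is WPD for some action of $G$ (hence of $H$) on a hyperbolic space, then $H$ is acylindrically hyperbolic.

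\textbf{Step 1: use the star base to build a finite irreducible subgraph.} Let $A$ be a finite star base. Since $\Gamma$ is irreducible with at least two vertices, $Z_\Gamma=\emptyset$, so $\bigcap_{a\in A}N^*(a)=\emptyset$. Connectedness of the complement lets me enlarge $A$ to a finite set $B\supseteq A$ whose induced complement graph is connected. Thus $\Gamma_B$ is finite and irreducible, and the intersection of the stars of $B$ in $\Gamma$ is still empty. Any element of $G_B$ that is ``$B$-full'' (not contained in a conjugate of a proper parabolic of $G_B$) is loxodromic WPD for the action of $G_B$ coming from \cite{MinasyanOsin2015}. The key point to verify is that such an element is also WPD for the action of the whole group $G$. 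This should reduce to computing centralizers: the centralizer of a cyclically reduced element $g$ of full support $B$ lies in $\langle g\rangle$-commensurable $\times\, G_{\bigcap_{b\in B}N^*(b)\setminus B}$, and the latter factor is trivial by the star base property. So $g$ has virtually cyclic centralizer, and it acts loxodromically, hence WPD, in the extension-graph action of $G$; this is in the spirit of Genevois's and Minasyan--Osin's criteria.

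\textbf{Step 2: produce such an element inside $H$.} Let $H$ be parabolically full and not virtually cyclic. By Minasyan--Osin-type arguments, which are structural and do not need finiteness, $H$ contains an element $h$ whose essential support (after cyclic reduction by conjugation) is not contained in any proper join-free restriction. Concretely, I would use that $H$ is not contained in a proper parabolic to find $h\in H$ whose support $S$ satisfies $\bigcap_{s\in S}N^*(s)=\emptyset$ and whose support graph $\Gamma_S$ is irreducible. The natural route is to take finitely many elements $h_1,\dots,h_n$ of $H$ whose supports together cover $B$ and connect up, and then combine them by a ping-pong / product argument, e.g. $h=h_1^{k_1}\cdots h_n^{k_n}$, so that the support of a cyclic reduction of $h$ contains $B$ and is irreducible. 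Then Step~1 applies with $B$ replaced by this finite support, and $h$ is a WPD element of $H$, finishing the proof.

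\textbf{Main obstacle.} I expect the hardest step to be Step~2. The issue is that a parabolically full subgroup of an infinite graph product need not contain a single element whose support meets every star. Being full is a condition about all of $V(\Gamma)$, but each element has finite support, so the finite star base is precisely what makes this possible. The plan is to show that for each $a\in A$, $H$ is not contained in $G_{N^*(a)}$-type parabolics, and that irreducibility lets the products be arranged so that their supports merge. If the combination argument fails, the fallback is to pass to a finitely generated subgroup $H_0\le H$ that is parabolically full in the finite-support parabolic $G_{B'}$ for a suitable finite $B'\supseteq A$. One would then apply \cite[Theorem~2.12]{MinasyanOsin2015} in $G_{B'}$ to obtain a WPD element there, and lift WPD to $G$ via Step~1.
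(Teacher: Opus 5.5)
Your overall architecture matches the paper's: enlarge the star base to a finite $B$ with $\Gamma_B^{\mathrm c}$ connected, find an element conjugate into $H$ whose parabolic closure is $G_S$ for some finite $S\supseteq B$, show it is loxodromic WPD for an action of $G$, and apply Osin's theorem.

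\textbf{Step~1.} This is where the finite star base has to do its work, and it contains a genuine gap: the inference ``loxodromic with virtually cyclic centralizer, hence WPD'' is false. In $\mathrm{BS}(1,2)=\langle a,t\mid tat^{-1}=a^2\rangle$ acting on its Bass--Serre tree, $t$ is loxodromic and $C(t)=\langle t\rangle$. But $t$ is not WPD, since $\mathrm{BS}(1,2)$ is neither virtually cyclic nor acylindrically hyperbolic; indeed the infinite group $\langle a^{2^n}\rangle$ fixes the axis segment $[u,t^nu]$. WPD must control every element that moves two far-apart axis points by a bounded amount, not only those commuting with $h$. For the same reason, WPD for an action of $G_B$ tells you nothing about the additional elements of $G$. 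For infinite $\Gamma$ you would also have to prove WPD for an extension-graph action from scratch; it is not interchangeable with the tree actions.

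Your instinct that the star base kills the link factor is right, but it must be applied to pointwise stabilizers of axis segments. The paper does this in the Bass--Serre tree of $G=G_{A}*_{G_{N(v)}}G_{N^*(v)}$, where $A=V(\Gamma)\setminus\{v\}$ and $v\in S$:
\begin{enumerate}[(i)]
\item A minimal-length conjugate $d$ of $h$ has an alternating amalgam normal form, so $h$ is loxodromic.
\item For $w=d^{2|S|+1}$, stripping $G_A$-syllables from both ends gives $x\in G_AwG_A$ with $\first(x)\cap A=\last(x)\cap A=\varnothing$.
\item Paths in $\Gamma_S^{\mathrm c}$ starting at $v$, whose syllables are never stripped, show $S\subseteq\supp(x)$.
\item The parabolic intersection formula gives $xG_Ax^{-1}\cap G_A=G_P$ with $P\subseteq\bigcap_{s\in S}N(s)=\varnothing$.
\end{enumerate}
Hence $\PStab_G(u,wu)=1$ for two vertices on the axis, and \cite[Corollary~4.3]{MinasyanOsin2015} yields WPD.

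\textbf{Step~2.} This also has a gap. The product $h_1^{k_1}\cdots h_n^{k_n}$ is not justified: supports can cancel under reduction and shrink under cyclic reduction, and controlling this is exactly the content of \cite[Theorem~6.16]{MinasyanOsin2015}.

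Your fallback is the paper's route, but its starting point is not automatic for infinite $\Gamma$. You need a finite $F\subseteq H$ with $\Pc(F)=pG_Sp^{-1}$ and $B\subseteq S$. The paper gets this from Proposition~\ref{prop:parabolic-fullness-finite-supports}: $H$ is parabolically full if and only if $\bigcup_F\esupp(F)=V(\Gamma)$. Its nontrivial direction uses the alignment lemma to produce a single conjugator $p$ placing all of $H$ inside $pG_{V(\Gamma)\setminus\{z\}}p^{-1}$ or $pG_Ep^{-1}$.

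One must then conjugate $\langle F\rangle$ into $G_S$. Although $\langle F\rangle\leq G_{\supp(F)}$, it need not be parabolically full there. Next, apply \cite[Theorem~6.16]{MinasyanOsin2015} in the finite irreducible graph product $G_S$, and use Lemma~\ref{lem:compatibility} to get $\Pc(h)=G_S$ in $G$.

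Note that this step uses only parabolic fullness. The star base enters through irreducibility of $\Gamma_S$, through $\link_\Gamma(S)=\varnothing$, and through the stabilizer computation above.
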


This is Theorem~\ref{thm:pf_ah}. The proof builds on the theory of parabolic subgroups developed in \cite{AntolinMinasyan2015}, and uses the action of $G$ on the Bass-Serre tree of a standard amalgam as in \cite{MinasyanOsin2015}. More precisely, Proposition \ref{prop:parabolic-fullness-finite-supports} gives a useful alternative characterization of parabolically full subgroups when $\Gamma$ is an infinite graph. This characterization is used to construct an element $h \in H$ whose support contains a walk in $\Gamma^c$ visiting all vertices of a finite star base. This element is a loxodromic WPD element for the action of $G$ on the Bass-Serre tree.
Here WPD stands for \emph{weak proper discontinuity}; see
\cite[Definition~2.5]{Osin2016}.

The finite star base hypothesis is sufficient uniformly for all
parabolically full subgroups, but it need not be necessary for an individual
one: Remark~\ref{rem:counterexample} gives a parabolically full,
acylindrically hyperbolic subgroup over an irreducible graph with no finite
star base.  Necessity is recovered for the class of
\emph{vertex-full} subgroups, namely those meeting every vertex group
non-trivially.  Since the full graph product is vertex-full, the next result
also gives the exact criterion for an irreducible graph product itself to be
acylindrically hyperbolic.  Thus Theorems~\ref{thm:intro-ah} and
\ref{thm:intro-subgroup} complement each other: the former applies to all
parabolically full subgroups under a finite star base hypothesis, while the
latter characterizes that hypothesis for vertex-full subgroups.

\begin{theorem}\label{thm:intro-subgroup}
Let $G=\Gamma\cG$, where $\Gamma$ is irreducible and has at least two
vertices, and let $H\leq G$ satisfy $H\cap G_v\neq1$ for every
$v\in V(\Gamma)$.  Then $H$ is virtually cyclic or acylindrically
hyperbolic if and only if $\Gamma$ has a finite star base.  Moreover, the
virtually cyclic case can occur only when $\Gamma$ has two vertices and
$H\cap G_v\cong C_2$ for both of them.
\end{theorem}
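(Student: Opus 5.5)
Both directions rest on two observations. A vertex-full subgroup $H$ is parabolically full: if $H\leq gG_Tg^{-1}$ with $T\subsetneq V(\Gamma)$, pick $v\notin T$ and $1\neq x\in H\cap G_v$. By the theory of parabolic subgroups in \cite{AntolinMinasyan2015}, $G_v\cap gG_Tg^{-1}$ is contained in a parabolic subgroup of $G_v$ of the form $G_v\cap kG_{T'}k^{-1}$ with $v\notin T'$, and this is trivial. That contradicts $x\neq1$.

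\emph{Sufficiency.} If $\Gamma$ has a finite star base, Theorem~\ref{thm:intro-ah} applies directly to the parabolically full subgroup $H$, so $H$ is virtually cyclic or acylindrically hyperbolic.

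\emph{The virtually cyclic case.} Irreducibility and $|V(\Gamma)|\geq2$ give two non-adjacent vertices $u,w$. Then $\langle H\cap G_u,H\cap G_w\rangle\cong (H\cap G_u)*(H\cap G_w)$, because the parabolic subgroup $G_{\{u,w\}}$ is this free product. If $H$ is virtually cyclic, this free product is virtually cyclic, which forces both factors to be $C_2$. Moreover, if $\Gamma$ had a third vertex $x$, irreducibility lets us choose $x$ non-adjacent to $u$ or to $w$, say to $u$. Then $H$ contains either a free product of three nontrivial groups (if $x\not\sim w$ as well), or $(H\cap G_u)*(H\cap G_x)$ and $(H\cap G_u)*(H\cap G_w)$ sharing the factor $H\cap G_u$. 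In the latter case the generated subgroup contains non-commuting infinite-order elements $ax,aw$ (with $a\in H\cap G_u$, $x\in H\cap G_x$, $w\in H\cap G_w$ nontrivial) that generate a free group of rank two, by a normal-form argument in $G_{\{u,w,x\}}$. Either way $H$ is not virtually cyclic. Hence $|V(\Gamma)|=2$ and $H\cap G_v\cong C_2$ for both vertices.

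\emph{Necessity.} Suppose $\Gamma$ has no finite star base and $H$ is acylindrically hyperbolic; the virtually cyclic case is already excluded, since a two-vertex graph trivially has a finite star base. An acylindrically hyperbolic group has trivial amenable radical and, more to the point, contains no infinite \emph{s-normal} subgroup of the following kind: if $N\leq H$ is an infinite subgroup that is a union of an increasing chain of subgroups each with infinite centralizer in $H$... This is too weak, so I will instead use Osin's result that in an acylindrically hyperbolic group $H$, for every loxodromic WPD element $h$, the centralizer $C_H(h)$ is virtually cyclic, together with the fact that acylindrical hyperbolicity gives a generalized loxodromic element $h\in H$. Take any $h\in H$. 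Its support is a finite set $S\subseteq V(\Gamma)$. Since no finite star base exists, $\bigcap_{s\in S}N^*(s)\supsetneq Z_\Gamma$, so there is a non-universal vertex $v$ adjacent to or equal to every vertex of $S$.

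The next step is where the main obstacle lies; it needs care because $v$ might lie in $S$. Choose instead, using that infinitely many such finite sets fail, a vertex $v\notin S$; this is possible by applying non-existence of a finite star base to $S$ and noting that if all witnesses lie in $S$ then $S\cup\{\text{finitely many}\}$ adjustments give a contradiction. Then $H\cap G_v\leq C_H(h)$.

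Since $v$ is non-universal, pick $w\not\sim v$. Then $(H\cap G_v)$ and its conjugate by $H\cap G_w$ generate a non-virtually-cyclic group inside... which need not centralize $h$. Instead, iterate: enlarge $S$ by finitely many vertices to find infinitely many distinct vertices $v_1,v_2,\dots$ in $\bigcap_{s\in S}N^*(s)\setminus Z_\Gamma$. Two of them, say $v_i,v_j$, are non-adjacent, or else they span an infinite clique whose vertex groups generate an infinite direct sum. In either case $C_H(h)\supseteq\langle H\cap G_{v_i},H\cap G_{v_j}\rangle$ is not virtually cyclic (a nontrivial free product other than $C_2*C_2$, or an infinite direct sum of nontrivial groups). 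So no element of $H$ has virtually cyclic centralizer, and in particular no element is a generalized loxodromic. This contradicts acylindrical hyperbolicity, proving necessity.

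The delicate combinatorial point is showing that the absence of a finite star base yields infinitely many non-universal vertices in $\bigcap_{s\in S}N^*(s)$ for every finite $S$. I would prove this by contradiction. If only finitely many such vertices $F$ existed, then for each $f\in F$ choose $a_f$ with $f\notin N^*(a_f)$; the set $S\cup\{a_f: f \in F\}$ is then a finite star base.
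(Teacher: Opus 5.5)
Your sufficiency argument and your proof that vertex-full subgroups are parabolically full match the paper. You invoke the Antol\'in--Minasyan intersection formula directly, where the paper uses Lemma~\ref{lem:alignment} with $\Pc(x_v)=G_v$. Necessity, however, goes by a genuinely different route.

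\textbf{Comparison with the paper's necessity argument.} The paper proceeds in three steps:
\begin{enumerate}[(i)]
\item Lemma~\ref{lem:ah-finite-centralizer} produces a finite $F\subseteq H$ with $C_H(F)=Z(H)$, built from two independent loxodromics whose virtually cyclic centralizers meet in a finite set.
\item $Z(H)=1$ by Lemma~\ref{lem:vertex_trans_center}.
\item Proposition~\ref{prop:irreducible-fsb} converts $C_H(F)=1$ into a finite star base by adjoining to $\supp(F)$ one non-neighbour of each of its vertices.
\end{enumerate}
You instead use a single generalized loxodromic $h$, whose centralizer is virtually cyclic. You then show that, without a finite star base, $\bigcap_{s\in\supp(h)}N^*(s)$ contains infinitely many vertices $v\notin\supp(h)$, and every such $H\cap G_v$ centralizes $h$. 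Your combinatorial lemma is correct: if only finitely many non-universal vertices survived, adjoining one non-neighbour of each to $S$ would give a finite star base. It is essentially the contrapositive of the paper's implication (2)$\Rightarrow$(1) in Proposition~\ref{prop:irreducible-fsb}.

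What each route buys:
\begin{itemize}
\item Yours is more economical: one loxodromic element, no centre computation, and the centralizer argument never uses irreducibility.
\item The paper's yields the finite-centralizer characterization that it reuses for discreteness of $\Inn(H)$.
\item The paper also handles the virtually cyclic case inside the same lemma. You must dispose of it separately, via the observation that it forces $|V(\Gamma)|=2$, which you do.
\end{itemize}

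\textbf{Local repairs.} Two steps need fixing, though neither affects the strategy.

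First, in the last step of necessity, a pair of non-adjacent $v_i,v_j$ is not enough. If $H\cap G_{v_i}\cong H\cap G_{v_j}\cong C_2$, they generate $C_2*C_2$, which is virtually cyclic, and nothing you say excludes this. Use the whole infinite family $V'$ instead. The subgroup $K=\langle H\cap G_v:v\in V'\rangle\leq C_H(h)$ is not finitely generated: any finitely generated subgroup lies in $G_{V''}$ for a finite $V''$, while $G_v\cap G_{V''}=1$ for $v\notin V''$. Since subgroups of virtually cyclic groups are finitely generated, $C_H(h)$ is not virtually cyclic. Alternatively, Ramsey's theorem gives an infinite clique or three pairwise non-adjacent vertices, and either case works.

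Second, in the virtually cyclic case with $x\sim w$, the elements $ax$ and $aw$ do not generate a free group in general. Indeed $(aw)^{-1}(ax)=w^{-1}x$ is a non-trivial element of $(H\cap G_w)\times(H\cap G_x)$. It has finite order whenever $x$ does, because you have just shown that $w$ has order $2$. The correct observation is that the three intersections generate
\[
(H\cap G_u)*\bigl((H\cap G_w)\times(H\cap G_x)\bigr),
\]
a free product of a group of order at least $2$ with one of order at least $4$. Such a free product contains a non-abelian free subgroup, which is exactly the paper's argument.

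Finally, delete the abandoned remark that acylindrically hyperbolic groups have trivial amenable radical. It is false, since $F_2\times C_2$ is a counterexample, although you never use it.
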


 The forward direction of the proof uses the fact that an acylindrically hyperbolic
group has a finite subset whose centralizer is its center. For an irreducible graph
product the center is trivial, and we show that the existence of such a finite subset
is equivalent to the existence of a finite star base.  The converse uses the observation that any such subgroup $H$ is necessarily parabolically full (Proposition \ref{prop:vertex_full_are_pf}).

Decomposing an arbitrary graph product over the
connected components of the complement graph also gives the following
reducible form, recorded as Corollary~\ref{cor:arbitrary-ah}.

\begin{corollary}\label{cor:intro-arbitrary-ah}
Let $(\Gamma_i)_{i\in I}$ be the irreducible components of $\Gamma$.  Then
$G=\Gamma\cG$ is acylindrically hyperbolic if and only if $I$ is finite,
exactly one factor $G_{V(\Gamma_i)}$ is acylindrically hyperbolic, and every
other component consists of a single vertex carrying a finite group.
\end{corollary}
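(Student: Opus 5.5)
The plan is to reduce to a direct-product statement. First, decompose $\Gamma$ along the connected components of the complement graph $\Gamma^c$: if $(\Gamma_i)_{i\in I}$ are the irreducible components, then every vertex of $\Gamma_i$ is adjacent to every vertex of $\Gamma_j$ for $i\neq j$. Hence $G=\Gamma\cG$ is the restricted direct product $\bigoplus_{i\in I}G_{V(\Gamma_i)}$. We then invoke two standard facts about acylindrically hyperbolic groups (Osin \cite{Osin2016}). First, an acylindrically hyperbolic group has no infinite normal subgroup that decomposes as a direct product of two infinite groups. More usefully, if $G=A\times B$ is acylindrically hyperbolic, then one factor is finite and the other is acylindrically hyperbolic. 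Second, acylindrical hyperbolicity passes to and from finite-index overgroups and quotients by finite normal subgroups; in particular $A\times F$ with $F$ finite is acylindrically hyperbolic iff $A$ is.

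For the forward direction, suppose $G$ is acylindrically hyperbolic. If $I$ were infinite, split $I=I_1\sqcup I_2$ with both parts infinite. Then $G=G_{I_1}\times G_{I_2}$, where both factors are infinite, since each is a restricted direct product of infinitely many non-trivial groups. This contradicts the product fact. So $I$ is finite. Next, write $G=G_{V(\Gamma_i)}\times G_{\text{rest}}$ for each $i$. By the product fact, for each $i$ either $G_{V(\Gamma_i)}$ is finite or the complementary factor is finite. At least one factor must be infinite, since acylindrically hyperbolic groups are non-elementary and hence infinite. Exactly one factor can be infinite, because two infinite factors would again contradict the product fact. That infinite factor $G_{V(\Gamma_{i_0})}$ is acylindrically hyperbolic, because it is a quotient of $G$ by the finite normal subgroup formed by the remaining factors. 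Each other $G_{V(\Gamma_j)}$ is finite.

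The remaining point, which I expect to be the main (mild) obstacle, is showing that a finite $G_{V(\Gamma_j)}$ forces $\Gamma_j$ to be a single vertex. Since $\Gamma_j$ is irreducible, if it had at least two vertices, its complement would be connected and would contain a non-edge $u,w$. Then $G_u*G_w$ would embed as a special subgroup, and a free product of two non-trivial groups is infinite. This is a contradiction, so $\Gamma_j$ is a single vertex carrying the finite group $G_j$.

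For the converse, suppose $I$ is finite, the factor $G_{V(\Gamma_{i_0})}$ is acylindrically hyperbolic, and every other factor is a finite vertex group. Then $G=G_{V(\Gamma_{i_0})}\times F$ with $F$ a finite direct product, and the second fact gives acylindrical hyperbolicity of $G$. Combined with Theorem~\ref{thm:intro-subgroup}, this makes the criterion explicit: the factor $G_{V(\Gamma_{i_0})}$ is acylindrically hyperbolic iff $\Gamma_{i_0}$ has a finite star base and at least two vertices and is not the $C_2*C_2$ case. However, the corollary as stated needs only the direct-product reduction. One should also note that a single-vertex component carrying an infinite group cannot be the acylindrically hyperbolic one unless that vertex group itself is acylindrically hyperbolic. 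This is consistent with the statement, since it only asks that $G_{V(\Gamma_{i_0})}$ be acylindrically hyperbolic.
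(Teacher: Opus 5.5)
Your proposal is correct and follows the paper's proof essentially step by step: the direct-sum decomposition over irreducible components, Osin's obstruction to splitting as a product of two infinite groups (used both to force $I$ finite and to leave exactly one infinite factor), the observation that two non-adjacent vertices inside an irreducible component make the factor infinite, and the converse by pulling back a non-elementary acylindrical action along the projection with finite kernel. The only differences are cosmetic: the paper gets acylindrical hyperbolicity of $G_{V(\Gamma_{i_0})}$ from Osin's result on infinite normal subgroups rather than by passing to a quotient by a finite normal subgroup, and your general claim about finite-index overgroups is more than you need, since only the elementary case $A\times F$ with $F$ finite is ever used.
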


For context, the finite star base condition had previously appeared, under
the name \emph{coneless set}, in Ferov's work on pointwise-inner
endomorphisms \cite{Ferov2016}.  Valiunas constructed explicit
acylindrical actions on quasi-trees under a bounded-degree hypothesis
\cite{Valiunas2021}; every bounded-degree graph has a finite star base,
but the converse is false.  Oyakawa studies the geometry of graph products over
infinite defining graphs \cite{Oyakawa2026}.  For finite defining graphs,
Cohen classifies the stronger property of admitting a non-elementary
acylindrical action on a simplicial tree \cite{Cohen2026} and Genevois classifies relative hyperbolicity \cite[Theorem 8.35]{Genevois2017}.

\subsection{Pure conjugating automorphisms}
Automorphism groups of graph products have been studied from several
complementary perspectives.  The classical generating results for
right-angled Artin groups are due to Servatius and Laurence
\cite{Servatius1989,Laurence1995}, while Tits described the automorphism
groups of right-angled Coxeter groups \cite{Tits1988}.  For graph products
of abelian groups, structural and generating results were obtained by
Guti\'errez--Piggott--Ruane, Corredor--Guti\'errez, and
Charney--Ruane--Stambaugh--Vijayan
\cite{GutierrezPiggottRuane2012,CorredorGutierrez2012,CharneyRuaneStambaughVijayan2010}.

The main algebraic input in this paper is a theorem of Genevois and Martin: for a
finite defining graph, conjugating automorphisms are generated by local
automorphisms and partial conjugations \cite{GenevoisMartin2019}.
Genevois and Varghese study algebraic properties of the pure conjugating
automorphism group, denoted $\operatorname{PConjAut}(G)$ in their work
\cite{GenevoisVarghese2020}.  Genevois' later memoir concerns geometric and
algebraic properties of full automorphism groups under hypotheses on the
vertex groups \cite{Genevois2024}.  More recently, Escalier and Horbez gave
generators for full automorphism groups of finite graph products under
natural maximality assumptions on the chosen graph-product decomposition
\cite[Theorem~1.5]{EscalierHorbez2026}.

We equip $\Aut(G)$ with the topology of pointwise convergence,
where $G$ is discrete.  This is especially natural in the present setting:
the defining graph may have arbitrary cardinality
 while a basic neighborhood records agreement on
only finitely many elements.  Consequently, finite induced subgraphs and
canonical retractions can be used to solve finite pieces of a problem, and
topological closure records exactly the passage from all finite pieces to the
whole graph product.

We use four natural subgroups of $\Aut(G)$.  The group $\ASpe(G)$ consists
of the automorphisms sending each $G_v$ to a conjugate of the same vertex
group; this is the same as the pure conjugating automorphism group.  Its subgroup
$\ISpe(G)$ consists of those automorphisms whose restriction to each $G_v$
is conjugation by an element of the ambient graph product, where the
conjugator may depend on $v$.  The groups $\FAut(G)$ and $\FInn(G)$ preserve
every vertex group setwise and act, respectively, by arbitrary and inner
automorphisms on the individual factors.

\begin{center}
\small
\renewcommand{\arraystretch}{1.2}
\begin{tabular}{c|p{0.27\textwidth}|p{0.42\textwidth}}
 & Image of $G_v$ & Action after transporting back to $G_v$ \\
\hline
$\ASpe(G)$ & a conjugate of $G_v$ & an arbitrary automorphism of $G_v$ \\
$\ISpe(G)$ & a conjugate of $G_v$ & an inner automorphism of $G_v$ \\
$\FAut(G)$ & $G_v$ & an arbitrary automorphism of $G_v$ \\
$\FInn(G)$ & $G_v$ & an inner automorphism of $G_v$
\end{tabular}
\end{center}

Our first result is a topological generation theorem. It identifies dense subgroups that will later be shown to be closed, and hence equal to the ambient special automorphism groups, under the finite star base or finite component witness set hypotheses.
We
write
$ \PartConjsc(G)$
for the subgroup generated by single-component partial conjugations.  The
finite-support subgroups of $\FAut(G)$ and $\FInn(G)$ are denoted,
respectively, by
$\FAutfin(G)$ and $\FInnfin(G).$

\begin{theorem}\label{thm:intro-generation}
For every graph product $G=\Gamma\cG$,
\[
 \ASpe(G)=\cl{\FAutfin(G)\PartConjsc(G)}
\]
and
\[
 \ISpe(G)=
 \cl{\FInnfin(G)\PartConjsc(G)}^{\,\ISpe(G)}.
\]
Moreover, $\ASpe(G)$ is closed in $\Aut(G)$, and $\ISpe(G)$ is closed in
$\Aut(G)$ if and only if $\Inn(G_v)$ is closed in $\Aut(G_v)$ for every
$v\in V(\Gamma)$.
\end{theorem}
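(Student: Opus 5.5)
The plan is to reduce everything to finite induced subgraphs via canonical retractions and then apply the Genevois--Martin theorem \cite{GenevoisMartin2019} on each finite piece.

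\textbf{Closedness of $\ASpe(G)$.} Suppose $\varphi_i\to\varphi$ pointwise with $\varphi_i\in\ASpe(G)$. Fix $v$ and a nontrivial $g\in G_v$. For large $i$ we have $\varphi_i(g)=\varphi(g)$, so $\varphi(g)$ is conjugate into $G_v$. I then need that $\varphi(G_v)$ as a whole lies in a single conjugate $xG_vx^{-1}$. For two elements $g,h\in G_v$, a single index $i$ works for both, so $\varphi(g)$ and $\varphi(h)$ lie in a common conjugate $x_iG_vx_i^{-1}$. Fixing one $g\neq 1$ pins down $x_i$ modulo the normalizer of $G_v$, which is $G_vC(G_v)=G_{N^*(v)}$ by the parabolic theory of \cite{AntolinMinasyan2015}. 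Hence $\varphi(G_v)\subseteq xG_vx^{-1}$. Surjectivity of $\varphi$ together with the analogous statement for $\varphi^{-1}$ should give equality. To get this, I would show $\ASpe(G)$ is a subgroup and that limits in $\Aut(G)$ of elements of $\ASpe(G)$ have inverses that are also limits: in $\Aut(G)$ with the pointwise topology (a topological group for discrete $G$), $\varphi_i^{-1}\to\varphi^{-1}$, so applying the same argument gives $\varphi^{-1}(G_v)\subseteq yG_vy^{-1}$. Malnormality-type facts for parabolics (a conjugate of $G_v$ contained in another conjugate of $G_v$ is equal to it) then yield equality.

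\textbf{The same argument for $\ISpe(G)$.} The argument above also shows that $\ISpe$-limits restrict on $G_v$ to automorphisms that are pointwise limits of inner ones twisted by the fixed conjugator. Hence $\ISpe(G)$ is closed iff each $\Inn(G_v)$ is closed. For the converse direction, I embed $\Aut(G_v)$ as factorwise automorphisms acting only at $v$; these form a closed copy whose intersection with $\ISpe(G)$ is $\Inn(G_v)$.

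\textbf{Density.} Given $\varphi\in\ASpe(G)$ and a finite set $F\subseteq G$, choose a finite $S\subseteq V(\Gamma)$ that contains the supports of $F$, of $\varphi(F)$, and of the conjugators for vertices in $\operatorname{supp}(F)$. Enlarge $S$ further so that the relevant component structure of $\Gamma\setminus N^*(v)$ restricted to $S$ reflects that of $\Gamma$. Let $\rho_S\colon G\to G_S$ be the retraction. Then $\psi=\rho_S\circ\varphi|_{G_S}$ is an endomorphism of $G_S$ that sends each $G_u$, $u\in S$, to a conjugate of $G_u$. The key lemma is that $\psi$ is an automorphism. Given that, Genevois--Martin writes $\psi$ as a product of local automorphisms and partial conjugations of $\Gamma_S$. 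Each partial conjugation of $\Gamma_S$ by $G_w$ on a component $C$ of $\Gamma_S\setminus N^*(w)$ lifts to a product of single-component partial conjugations of $\Gamma$: conjugate on every component of $\Gamma\setminus N^*(w)$ meeting $C$. Because different $\Gamma$-components may merge inside $S$ only if a path exists in $\Gamma$, the lift agrees on $G_S$. The product of these lifts is an element of $\FAutfin(G)\PartConjsc(G)$, where the partial conjugations can be moved past the finite-support factorwise automorphisms since $\FAut$ normalizes $\PartConjsc$. It agrees with $\varphi$ on $F$. For $\ISpe$ the same construction keeps local automorphisms inner, so density holds inside $\ISpe(G)$.

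\textbf{Main obstacle.} The hardest step is proving that $\psi$ is an automorphism of $G_S$, and that single-component partial conjugations of $\Gamma$ restrict correctly to $\Gamma_S$. I would first handle injectivity and surjectivity by choosing $S$ closed under enough conjugator supports, and if needed fall back on a Hopfian-type argument for conjugating endomorphisms of finite graph products.
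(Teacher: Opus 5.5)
Your closedness arguments for $\ASpe(G)$ and $\ISpe(G)$ are fine and essentially match the paper's; ending with the chain of inclusions and Fact~\ref{fact:parabolic-basics}(5) also works. The density part, however, has two genuine gaps. First, the step you call the key lemma, that $\psi=\rho_S\circ\varphi|_{G_S}$ is an automorphism, is left open, and neither fallback works. Enlarging $S$ by conjugator supports is beside the point. A Hopfian-type argument cannot supply surjectivity, because an endomorphism carrying each vertex group isomorphically onto a conjugate of itself need not be onto: on $F(a,b)=\langle a\rangle*\langle b\rangle$, the map $a\mapsto a$, $b\mapsto (ba)b(ba)^{-1}$ is injective with image of infinite index. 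The missing observation is short. The kernel $\ker\rho_S$ is the normal closure of $\{G_v:v\notin S\}$. Since $\varphi$ and $\varphi^{-1}$ send each $G_v$ to a conjugate of $G_v$, both preserve $\ker\rho_S$. Hence $\varphi$ descends to an automorphism of $G/\ker\rho_S\cong G_S$, and that automorphism is exactly $\psi$. This works for every $S$ (Lemma~\ref{lem:special-retraction}).

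Second, your one-level lifting needs the following for every center $w$ of the set $S$ on which you apply Genevois--Martin, and every component $C$ of $\Gamma_S\setminus N^*(w)$: the unique component $D\supseteq C$ of $\Gamma\setminus N^*(w)$ must satisfy $D\cap S=C$. Your justification has the direction reversed. A component of $\Gamma_S\setminus N^*(w)$ always lies in a single component of $\Gamma\setminus N^*(w)$; the danger is an ambient component splitting inside $S$.

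If you impose this for all centers of the enlarged set, it can be impossible. Take $V(\Gamma)=\mathbb N$ with $n,m$ adjacent iff $|n-m|\geq 3$. For $n\geq 2$, the graph $\Gamma\setminus N^*(n)$ is the path $n+1,\,n-2,\,n+2,\,n-1$. So if $\{0,\dots,b\}\subseteq S$ with $b\geq 3$, then $b+2$ is the only neighbour of $b-1$ in $\Gamma\setminus N^*(b)$, and $b+1$ is the only neighbour of $b-2$ in $\Gamma\setminus N^*(b-1)$. Both must therefore lie in $S$, and by induction no finite $S\supseteq\{0,1,2,3\}$ works.

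If you impose it only for centers in the original support set, the partial conjugations centered at added vertices have no lift agreeing with them on $G_S$. Your conclusion that the product agrees with $\varphi$ on $F$ then breaks down.

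The paper avoids this with two levels. It takes $S=\supp(F)\cup\supp(\varphi(F))$, and $T\supseteq S$ containing connecting paths only for triples $(u,x,y)$ from $S$ (Lemma~\ref{lem:finite-path-closure}). It decomposes $\varphi_T=\rho_T\circ\varphi|_{G_T}$ in $G_T$ and compares on $G_S$ through the homomorphism $r_{T,S}$ induced by the retraction $G_T\to G_S$. Factors centered outside $S$, or with $C_i\cap S=\varnothing$, restrict to the identity on $G_S$ and are simply dropped. The remaining factors lift to $\pi(u_i,a_i,D_i)$ with $D_i\cap S=C_i\cap S$. Then $\widetilde\varphi|_{G_S}=r_{T,S}(\varphi_T)=\varphi_S$, so $\widetilde\varphi(f)=\rho_S(\varphi(f))=\varphi(f)$ for every $f\in F$.
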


The generation statement is Theorem~\ref{thm:partial-density}, and
the closedness statement is Theorem~\ref{thm:special-closedness}.  The proof
uses the finite-graph theorem of Genevois--Martin only after retracting onto
a suitable finite induced subgraph.  Partial conjugations of that finite
graph are then lifted back to the ambient graph product so as to agree on a
prescribed finite set.  This is precisely the kind of finite approximation
for which the pointwise topology is natural.

Star-connectedness turns these topological generators into inner and factorwise
automorphisms, and a finite star base turns density into equality.

\begin{theorem}\label{thm:intro-special}
For every graph product $G=\Gamma\cG$, the following are equivalent:
\begin{enumerate}[(1)]
\item $\Gamma$ is star-connected;
\item $\ASpe(G)=\cl{\Inn(G)\FAut(G)}$;
\item $\ISpe(G)=\cl{\Inn(G)\FInn(G)}^{\,\ISpe(G)}$.
\end{enumerate}
If $\Gamma$ also has a finite star base, then the closures may be omitted in
(2) and (3).
\end{theorem}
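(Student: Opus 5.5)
We will derive Theorem~\ref{thm:intro-special} from the topological generation Theorem~\ref{thm:intro-generation} together with a combinatorial analysis of single-component partial conjugations. Recall that a partial conjugation is determined by a vertex $v$, an element $g\in G_v$, and a union $C$ of connected components of $\Gamma\setminus N^*(v)$. It conjugates every $G_u$ with $u\in C$ by $g$ and fixes every other vertex group.

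\textbf{(1)$\Rightarrow$(2) and (1)$\Rightarrow$(3).} The inclusions $\cl{\Inn(G)\FAut(G)}\subseteq\ASpe(G)$ and $\cl{\Inn(G)\FInn(G)}^{\,\ISpe(G)}\subseteq\ISpe(G)$ hold because the groups on the right are closed by Theorem~\ref{thm:intro-generation} (respectively, $\ISpe(G)$ is closed in itself) and contain the generators.

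For the reverse inclusions, suppose $\Gamma$ is star-connected. Then $\Gamma\setminus N^*(v)$ has at most one component, so a single-component partial conjugation by $g\in G_v$ conjugates every vertex group outside $N^*(v)$ by $g$. It fixes $G_w$ for $w\in N^*(v)$, and inner conjugation by $g$ also fixes $G_w$ for $w\in N^*(v)\setminus\{v\}$, since these groups commute with $g$. Hence the partial conjugation equals $\ad_g$ composed with the factorwise automorphism that is $\ad_{g^{-1}}$ on $G_v$ and the identity elsewhere. This factorwise automorphism lies in $\FInnfin(G)$. Consequently
\[
\PartConjsc(G)\subseteq\Inn(G)\FInnfin(G).
\]
Since $\Inn(G)$ is normalized by $\FAut(G)$, the product $\Inn(G)\FAut(G)$ is a group. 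Therefore $\FAutfin(G)\PartConjsc(G)\subseteq\Inn(G)\FAut(G)$, and similarly for the $\FInn$ version. Taking closures and applying Theorem~\ref{thm:intro-generation} gives (2) and (3).

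\textbf{(2)$\Rightarrow$(1) and (3)$\Rightarrow$(1).} This is the main obstacle. Suppose some $\Gamma\setminus N^*(v)$ has two components $C_1$ and $C_2$. Pick $1\neq g\in G_v$ and let $\varphi$ be the partial conjugation by $g$ on $C_1$; it lies in $\ISpe(G)\subseteq\ASpe(G)$. We must show $\varphi$ is not a limit of elements $\ad_x\circ\psi$ with $\psi\in\FAut(G)$. Choose $u_1\in C_1$, $u_2\in C_2$ and nontrivial $a_i\in G_{u_i}$, and consider the finite set $F=\{a_1,a_2\}\cup\{\text{a nontrivial element of }G_v\}$. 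If $\ad_x\circ\psi$ agreed with $\varphi$ on $F$, then $x\psi(a_1)x^{-1}=ga_1g^{-1}$ and $x\psi(a_2)x^{-1}=a_2$. Using normal forms in graph products (the conjugator carrying $G_{u}$ to itself up to the normalizer $G_u\times G_{\link(u)}$), we get $x\in gG_{N^*(u_1)}$ and $x\in G_{N^*(u_2)}$. We also have $x\psi(G_v)x^{-1}=G_v$, with agreement on an element of $G_v$, which constrains $x$ to $G_{N^*(v)}$. Since $u_1$ and $u_2$ lie in different components, one should derive a contradiction from $g\notin G_{N^*(u_1)}$. The delicate point is handling the factor of $x$ lying in vertex groups of $N^*(v)$ that commute with both $u_1$ and $u_2$. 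For this I would use parabolic-intersection results from \cite{AntolinMinasyan2015}, reducing to the finite induced subgraph on $\{v,u_1,u_2\}$ together with a path between them via the canonical retraction, and checking there that the $v$-syllable of $x$ must be simultaneously $g$ and trivial.

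\textbf{Omitting the closures.} With a finite star base, star-connectedness upgrades it to a finite component witness set. The closedness results announced in the abstract then show that $\Inn(G)\FAut(G)$ is closed in $\Aut(G)$ and that $\Inn(G)\FInn(G)$ is closed in $\ISpe(G)$. The mechanism is as follows. A convergent net $\ad_{x_i}\psi_i$ eventually agrees with the limit on generators from the finite witness set $A$. This pins down the $x_i$ modulo $\bigcap_{a\in A}$ of the normalizers, which by the star base condition is the center $G_{Z_\Gamma}$. Hence the $x_i$ can be taken eventually constant, and the limit has the form $\ad_x\circ\lim\psi_i$, where $\lim\psi_i$ lies in the closed group $\FAut(G)$.
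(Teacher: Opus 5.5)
Your overall architecture matches the paper's proof of Theorem~\ref{thm:exact-special}. Density comes from Theorem~\ref{thm:partial-density} together with writing an entire partial conjugation as an inner automorphism times an element of $\FInnfin(G)$. Necessity comes from a non-entire partial conjugation, and closedness of $\Inn(G)\FAut(G)$ comes from a finite star base. The genuine gap is in (2)$\Rightarrow$(1). You correctly arrive at $x\in gG_{N^*(u_1)}$ and $x\in G_{N^*(u_2)}$, but then only say that ``one should derive a contradiction.'' You then sketch a heavier retraction/parabolic-intersection argument built around a ``delicate point'' that does not actually arise.

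The contradiction is a one-line support computation, which is the content of Lemma~\ref{lem:nonentire-obstruction}. Since $u_2\notin N^*(v)$, also $v\notin N^*(u_2)$, so $x\in G_{N^*(u_2)}$ forces $v\notin\supp(x)$. As $1\neq g\in G_v$, the $v$-syllable of $g^{-1}x$ has nothing to merge with, so $v\in\supp(g^{-1}x)\subseteq N^*(u_1)$. This contradicts $u_1\notin N^*(v)$. The factors of $x$ in vertex groups commuting with $u_1$ and $u_2$ are irrelevant, because they carry no $v$-syllable. The test element from $G_v$, the path, and the parabolic-intersection results are all unnecessary. Your retraction idea can be made to work: $\rho_{\{v,u_1,u_2\}}$ sends $x$ into $gG_{u_1}\cap G_{u_2}$ inside $G_v*G_{u_1}*G_{u_2}$, and this intersection is empty. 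As written, however, that step is not carried out.

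There are also smaller corrections in the closure-omission step.
\begin{itemize}
\item The group $G_{Z_\Gamma}$ is not the center of $G$; the center is $\bigoplus_{v\in Z_\Gamma}Z(G_v)$. What lets you take the $x_i$ eventually constant is that $\ad(z)\in\FInn(G)$ for $z\in G_{Z_\Gamma}$ (Lemma~\ref{lem:universal-inner-factorwise}). Hence the tail of the net lies in a single coset $\ad(x_j)\FAut(G)$, respectively $\ad(x_j)\FInn(G)$.
\item For the version of (3), the limit $\ad(x_j)^{-1}\phi$ lies in both $\FAut(G)$ and $\ISpe(G)$, hence in $\FInn(G)$ by \eqref{eq:pinn-intersection}. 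You should state this.
\item Upgrading to a finite component witness set is unnecessary. Proposition~\ref{prop:products-closed-fsb} uses only the finite star base, and star-connectedness enters solely through density.
\end{itemize}
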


This is Theorem~\ref{thm:exact-special}.  It displays the two ingredients
already present in our earlier right-angled Coxeter work in their general
form: star-connectedness gives density, while the finite star base gives
closedness.  When $\Gamma$ is not star-connected, a non-entire partial
conjugation cannot even be approximated by inner times factorwise
automorphisms.

For graphs which are not star-connected, the correct finite condition must
remember every component on which a partial conjugation may act.  This is
the role of finite component witness sets.  They imply closedness of
$\PartConj(G)\FAut(G)$ and, relatively, of
$\PartConj(G)\FInn(G)$ (see Proposition \ref{prop:product_closed}). Combining with Theorem \ref{thm:intro-generation} therefore yields the following.

\begin{corollary}
    Let $G = \Gamma\cG$. If $\Gamma$ has a finite component witness set, then \[
 \ASpe(G)=\FAut(G)\PartConjsc(G)
\]
and
\[
 \ISpe(G)=
 \FInn(G)\PartConjsc(G).
\]
\end{corollary}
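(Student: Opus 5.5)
The plan is to combine the density statement of Theorem~\ref{thm:intro-generation} with the closedness statement of Proposition~\ref{prop:product_closed}. The starting point is the trivial chain of inclusions
\[
\FAutfin(G)\PartConjsc(G)\subseteq \FAut(G)\PartConjsc(G)\subseteq \FAut(G)\PartConj(G)\subseteq \ASpe(G).
\]
The last inclusion holds because factorwise automorphisms and partial conjugations each send every vertex group to a conjugate of itself.

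For the first identity:
\begin{enumerate}[(1)]
\item Theorem~\ref{thm:intro-generation} says the leftmost set is dense in $\ASpe(G)$.
\item Proposition~\ref{prop:product_closed} says that, when $\Gamma$ has a finite component witness set, $\PartConj(G)\FAut(G)$ is closed in $\Aut(G)$.
\end{enumerate}
I first check that $\PartConj(G)\FAut(G)=\FAut(G)\PartConj(G)$ is a subgroup. This follows because $\FAut(G)$ normalizes $\PartConj(G)$. Indeed, conjugating a partial conjugation by $\phi\in\FAut(G)$ gives another partial conjugation with the same support component and conjugator $\phi(g)$. So the product set is a closed set containing a dense subset of $\ASpe(G)$, and therefore equals $\ASpe(G)$.

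The remaining point is to replace $\PartConj(G)$ by $\PartConjsc(G)$. A general partial conjugation acts by a single element $g\in G_v$ on a union $C$ of components of $\Gamma\setminus N^*(v)$. If $C$ has finitely many components, it is a finite product of single-component partial conjugations, since these commute when they share the same $v$ and $g$. If $C$ is infinite, a finite component witness set $F$ meets every component, so only finitely many components of $\Gamma\setminus N^*(v)$ exist. Hence every partial conjugation lies in $\PartConjsc(G)$, and we get $\ASpe(G)=\FAut(G)\PartConjsc(G)$.

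For $\ISpe(G)$ the argument is the same, using the relative statements. By Theorem~\ref{thm:intro-generation}, $\FInnfin(G)\PartConjsc(G)$ is dense in $\ISpe(G)$ for the subspace topology. By Proposition~\ref{prop:product_closed}, $\PartConj(G)\FInn(G)$ is closed in $\ISpe(G)$. Again $\FInn(G)$ normalizes $\PartConj(G)$, so the product is a subgroup. It contains the dense set, so it equals $\ISpe(G)$. Finally $\PartConj(G)=\PartConjsc(G)$ by the finiteness of components established above.

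The main obstacle is this finiteness of the number of components of each $\Gamma\setminus N^*(v)$. It needs care because distinct components might all be met by a single witness vertex only if that vertex lies in several components, which is impossible since components are disjoint. So a finite witness set meets only finitely many components, and the count is bounded by $|F|$. If the paper instead intends $\PartConj(G)$ to be generated by single-component ones already, this step is vacuous.
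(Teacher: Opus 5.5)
Your proposal is correct and follows the paper's proof: density of $\FAutfin(G)\PartConjsc(G)$ in $\ASpe(G)$ and of $\FInnfin(G)\PartConjsc(G)$ in $\ISpe(G)$ comes from Theorem~\ref{thm:partial-density}. Closedness of $\PartConj(G)\FAut(G)$, and relative closedness of $\PartConj(G)\FInn(G)$, comes from Proposition~\ref{prop:product_closed}. Finally, $\PartConj(G)=\PartConjsc(G)$ because a finite component witness set bounds the number of components of each $\Gamma\setminus N^*(u)$ by its cardinality.
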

This is Corollary \ref{cor:product_equality}.
Finite component witness sets also yield the following exact discreteness
criteria.

\begin{theorem}\label{thm:intro-component-witness}
The group $\PartConj(G)\FInn(G)$ is discrete if and only if $\Gamma$ has a
finite component witness set, every $\Inn(G_v)$ is discrete in
$\Aut(G_v)$, and $\Inn(G_v)=1$ for all but finitely many vertices.  Likewise,
$\PartConj(G)\FAut(G)$ is discrete if and only if $\Gamma$ has a finite
component witness set, every $\Aut(G_v)$ is discrete, and
$\Aut(G_v)=1$ for all but finitely many vertices.
\end{theorem}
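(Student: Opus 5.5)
We prove both equivalences in the same way; write $P=\PartConj(G)\FInn(G)$, the argument for $\PartConj(G)\FAut(G)$ being identical with $\Aut(G_v)$ in place of $\Inn(G_v)$. Discreteness of a subgroup of $\Aut(G)$ in the pointwise topology means that there is a finite set $F\subseteq G$ such that the only element of the subgroup fixing $F$ pointwise is the identity. We may take $F$ inside $\bigcup_{v\in S}G_v$ for a finite $S\subseteq V(\Gamma)$, since every element of $G$ is a finite product of vertex-group elements.

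\emph{Sufficiency.} Let $W$ be a finite component witness set, let $S_0$ be the finite set of vertices with $\Inn(G_v)\neq1$, and for each $v\in S_0$ choose a finite $F_v\subseteq G_v$ witnessing discreteness of $\Inn(G_v)$ in $\Aut(G_v)$. Put $S=W\cup S_0\cup\{\text{a vertex of }\Gamma\}$ and take as basic neighbourhood the set of elements of $P$ fixing $F=\bigcup_{v\in S}F_v\cup\bigcup_{w\in W}\{g_w\}$, where $g_w\in G_w$ is non-trivial.

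Let $\varphi\in P$ fix $F$. Each element of $P$ acts on $G_u$ as conjugation by some $x_u\in G$ composed with an inner automorphism of $G_u$. If $\varphi$ fixes a non-trivial element of each $G_w$ with $w\in W$, then $\varphi$ preserves each $G_w$. We then argue, as in the proof that finite component witness sets give closedness (Proposition~\ref{prop:product_closed}), that $\varphi$ preserves every vertex group, after possibly composing with an inner automorphism that is trivial on $F$.

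The point is this. A partial conjugation by $g\in G_v$ acting on a component $C$ of $\Gamma\setminus N^*(v)$ moves the witness in $C$. Hence in the normal form of $\varphi$ as a product of partial conjugations, the conjugators acting on each component must cancel. Making this precise is the main obstacle. We would try to use the explicit description of $\PartConj(G)\FInn(G)$ via the conjugator function $u\mapsto x_u$ (modulo $N$-centralizers), and show that for $u$ in a component $C$ of $\Gamma\setminus N^*(w)$ the coset of $x_u$ is determined by that of $x_{w'}$ for the witness $w'\in C$. This uses the component structure exactly as in the closedness proof.

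Once $\varphi\in\FInn(G)$ fixes $F$, its restriction to $G_v$ for $v\in S_0$ is trivial by the choice of $F_v$, and trivial for $v\notin S_0$ because $\Inn(G_v)=1$. Hence $\varphi=\mathrm{id}$.

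\emph{Necessity.} First, $\FInn(G)\cong\prod_v\Inn(G_v)$ with the product topology sits inside $P$. It is closed relative to $P$ and is discrete whenever $P$ is. A product of groups is discrete only if all but finitely many factors are trivial and each factor is discrete. Second, suppose $\Gamma$ has no finite component witness set, and let $F$ be supported on a finite $S$. Then some component $C$ of some $\Gamma\setminus N^*(v)$ misses $S$. Choose $1\neq g\in G_v$ and let $\varphi$ be the partial conjugation by $g$ on $C$. Then $\varphi$ fixes every $G_u$ with $u\notin C$, so it fixes $F$. Moreover $\varphi\neq\mathrm{id}$: it conjugates a non-trivial $G_u$ with $u\in C$, $u$ not adjacent to $v$, by $g$, and $g$ does not centralize $G_u$. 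Thus every neighbourhood of the identity contains a non-trivial element of $P$, so $P$ is not discrete.
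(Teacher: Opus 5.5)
Your necessity direction is correct and is essentially the paper's argument. The factorwise subgroup $\FInn(G)\cong\prod_v\Inn(G_v)$ (resp.\ $\FAut(G)$) inherits discreteness, which gives the local conditions. A single-component partial conjugation on a component missing $\supp(F)$ is then a non-trivial element fixing $F$.

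The sufficiency direction has a genuine gap, exactly where you write that making things precise is the main obstacle. You never prove that if $\varphi=\alpha\tau$, with $\alpha\in\PartConj(G)$ and $\tau\in\FInn(G)$, preserves $G_s$ for every $s$ in the component witness set, then it preserves every vertex group. That step is the whole content of this direction.

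Your heuristic does not supply it. There is no normal form for products of partial conjugations in which ``the conjugators acting on each component cancel'', because partial conjugations based at different vertices have overlapping, non-nested domains. The invariant you propose is also false as stated. Take the graph with vertices $u,w',z,w$ and edges $uw'$, $w'z$. Then $u,w'$ lie in one component of $\Gamma\setminus N^*(w)$, yet $\pi(z,c,\{u\})$ changes the conjugator coset of $G_u$ while that of $G_{w'}$ stays trivial, exactly as for the identity. Invoking the closedness proposition does not help either, since closedness does not imply discreteness.

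The missing idea is support propagation (Lemmas~\ref{lem:support-propagation} and~\ref{lem:comp_witness_fixing}), which holds for every $\sigma\in\ASpe(G)$.

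Write $\sigma(G_x)=p_xG_xp_x^{-1}$ with $p_x$ of minimal length in $p_xG_{N^*(x)}$. Then $\last(p_x)\cap N^*(x)=\varnothing$ and $\supp(\sigma(G_x))=\supp(p_x)\cup\{x\}$.

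Now let $x,y\notin N^*(v)$ be adjacent. Then $\sigma(G_x)$ and $\sigma(G_y)$ commute, so $q=p_x^{-1}p_y$ satisfies $qG_yq^{-1}\leq G_{N^*(x)}$. Lemma~3.17 of Genevois--Martin gives $q\in G_{N^*(x)}G_{N^*(y)}$. Neither factor involves $v$, so $v\in\supp(p_x)$ forces $v\in\supp(p_y)$. Hence whether $v\in\supp(\sigma(G_x))$ depends only on the component of $\Gamma\setminus N^*(v)$ containing $x$.

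Suppose some $\sigma(G_x)\neq G_x$. Take a shortest conjugator $g$ and $v\in\last(g)$. Then $x\notin N^*(v)$ and $v\in\supp(\sigma(G_x))$. Propagate to the witness $s$ in the component of $\Gamma\setminus N^*(v)$ containing $x$. This gives $v\in\supp(\sigma(G_s))=\{s\}$, contradicting $s\notin N^*(v)$.

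Apply this to $\sigma=\alpha$, which preserves each $G_s$ by Lemma~\ref{lem:vertex-conjugates}. You get $\alpha\in\FAut(G)\cap\ISpe(G)=\FInn(G)$, so $\varphi\in\FInn(G)$. Your coordinatewise final step then finishes the proof, and it needs no auxiliary inner automorphism.
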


These are Theorems~\ref{thm:pinn-discrete-general} and
\ref{thm:paut-discrete-general}.  On a star-connected graph, finite
component witness sets are exactly finite star bases, so this general
criterion reduces to the finite star base results of Section~\ref{sec:finite-star-base}.
This is the precise point at which the geometric and topological halves of
the paper meet.  In \cite{PaoliniRabideau2026}, the finite star base
condition controlled the closedness of the inner automorphism group of a
countable right-angled Coxeter group.  Here the same condition is the exact
criterion for acylindrical hyperbolicity in the irreducible case.  On the
automorphism side, it again controls closedness whenever star-connectedness
eliminates the possibility of independent partial conjugations on different
components of a deleted star.  For arbitrary graphs, finite component
witness sets record precisely this additional componentwise information.
\subsection{Abelian vertex groups and right-angled Coxeter groups}
When all vertex groups are abelian, the factorwise inner group is trivial
and the pure conjugating automorphism group admits a canonical topological semidirect-product decomposition; this is the content of the next theorem.

\begin{theorem}\label{thm:intro-abelian}
Let $G=\Gamma\cG$ be a graph product of non-trivial abelian groups.  There is
a canonical topological semidirect-product decomposition
\[
 \ASpe(G)=\ISpe(G)\rtimes\FAut(G),
 \qquad
 \FAut(G)\cong\prod_{v\in V(\Gamma)}\Aut(G_v).
\]
Moreover,
\[
 \ISpe(G)=\cl{\PartConjsc(G)}^{\,\ISpe(G)}.
\]
\end{theorem}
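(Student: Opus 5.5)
We will build the decomposition from a continuous retraction $\ASpe(G)\to\FAut(G)$. Take $\varphi\in\ASpe(G)$ and a vertex $v$. Then $\varphi(G_v)=g_vG_vg_v^{-1}$ for some $g_v\in G$, and we put $\alpha_v(\varphi)=\iota_{g_v}^{-1}\circ\varphi|_{G_v}\in\Aut(G_v)$. The first task is to show $\alpha_v(\varphi)$ does not depend on the choice of $g_v$. If $g_vG_vg_v^{-1}=g_v'G_vg_v'^{-1}$, then $h=g_v^{-1}g_v'$ normalizes $G_v$. By the standard description of normalizers of parabolic subgroups (Antol\'in--Minasyan), $N_G(G_v)=G_v\times G_{\link(v)}$. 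Write $h=ab$ with $a\in G_v$ and $b\in G_{\link(v)}$. Then $b$ centralizes $G_v$, and since $G_v$ is abelian, so does $a$. Hence conjugation by $h$ is trivial on $G_v$, and $\alpha_v$ is well defined. This is the only place abelianness enters.

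Next, put $\alpha(\varphi)=(\alpha_v(\varphi))_v\in\prod_v\Aut(G_v)$. The isomorphism $\FAut(G)\cong\prod_v\Aut(G_v)$ is easy. By the universal property of the graph product, any family of factor automorphisms extends uniquely to an automorphism of $G$. Restricting to factors inverts this extension. The isomorphism is a homeomorphism because pointwise convergence on $G$ is equivalent to pointwise convergence on the generating set $\bigcup_vG_v$. Indeed, an element of $G$ is a finite word in vertex elements, and composition and evaluation are continuous on discrete $G$.

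To see that $\alpha$ is a homomorphism, take $\varphi,\psi\in\ASpe(G)$ with $\psi|_{G_v}=\iota_{h}\circ\beta$ and $\beta\in\Aut(G_v)$. Then $\varphi\psi|_{G_v}=\iota_{\varphi(h)}\circ\varphi|_{G_v}\circ\beta=\iota_{\varphi(h)g_v}\circ\alpha_v(\varphi)\circ\beta$. Well-definedness then gives $\alpha_v(\varphi\psi)=\alpha_v(\varphi)\alpha_v(\psi)$.

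For continuity of $\alpha$, fix $x\in G_v$. We must show that $\alpha_v(\varphi)(x)$ is determined by $\varphi$ on a finite set, locally uniformly in $\varphi$. We expect this to be the main obstacle, since $g_v$ is not read off from finitely many values. We would first try the following. If $x\neq1$, then $\varphi(x)=g_vyg_v^{-1}$ with $y=\alpha_v(\varphi)(x)\in G_v$. Write $\varphi(x)$ in cyclically reduced normal form. The cyclically reduced core of a conjugate of a vertex element is a single syllable $y'\in G_v$. Moreover $y'$ is conjugate to $y$ within $G_v$, hence equal to $y$ because $G_v$ is abelian. So $y$ is a function of the single value $\varphi(x)$, and $\alpha$ is continuous. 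We would check carefully that the cyclic reduction is unambiguous in graph products. If it is not, the fallback is to use the retraction $G\to G_v$, which kills all other vertex groups: it sends $\varphi(x)$ to a $G_v$-conjugate of $y$, which is $y$ itself by abelianness. This fallback looks cleaner and we would use it instead.

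By construction, $\ker\alpha=\ISpe(G)$: $\alpha_v(\varphi)$ is trivial exactly when $\varphi|_{G_v}$ is conjugation by $g_v$, and $\FInn(G)$ is trivial because the vertex groups are abelian. Also, $\alpha$ restricts to the identity on $\FAut(G)$, so it is a continuous retraction. Hence the multiplication map $\ISpe(G)\rtimes\FAut(G)\to\ASpe(G)$ is a bijective continuous homomorphism, with continuous inverse $\varphi\mapsto(\varphi\alpha(\varphi)^{-1},\alpha(\varphi))$. This gives the topological semidirect product, and it is canonical because $\alpha$ involves no choices.

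Finally, for the closure statement we apply Theorem~\ref{thm:intro-generation}: $\ISpe(G)=\cl{\FInnfin(G)\PartConjsc(G)}^{\,\ISpe(G)}$. Since $\FInnfin(G)\subseteq\FInn(G)$ is trivial, this is exactly $\cl{\PartConjsc(G)}^{\,\ISpe(G)}$.
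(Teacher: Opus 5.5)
Your proposal is correct. Its overall structure matches the paper's proof of Theorem~\ref{thm:abelian-splitting}: the same coordinate maps $\ad(g_v^{-1})\circ\varphi|_{G_v}$, the same well-definedness argument through $N_G(G_v)=G_{N^*(v)}$, the same multiplicativity computation and kernel identification, the same continuous inverse $\varphi\mapsto(\varphi\alpha(\varphi)^{-1},\alpha(\varphi))$, and the same derivation of the closure statement from Theorem~\ref{thm:partial-density} using $\FInnfin(G)=1$.

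The genuine difference is the continuity step. The paper takes a net $\sigma_i\to\sigma$ and picks a non-trivial $x\in G_v$. It then uses Lemma~\ref{lem:vertex-conjugates} to show that $\sigma_i(G_v)=\sigma(G_v)$ eventually, and evaluates the tail with a single fixed conjugator. Your retraction argument instead gives the closed formula $\alpha_v(\varphi)=\rho_{\{v\}}\circ\varphi|_{G_v}$. So each value $\alpha_v(\varphi)(x)$ depends only on the single value $\varphi(x)$, and continuity is immediate, with no nets and no use of intersections of conjugate vertex groups.

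The formula gives you more than you use. It identifies $\alpha_v(\varphi)$ with the induced automorphism $\varphi_{\{v\}}$ of Lemma~\ref{lem:special-retraction}. Independence from the choice of $g_v$ and multiplicativity then follow directly from that lemma, and the normalizer computation becomes unnecessary.

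Two small remarks. You can drop the cyclic-reduction attempt, since the retraction argument is complete. Also, abelianness is used again in that step, to get $\rho_{\{v\}}(g_v)\,y\,\rho_{\{v\}}(g_v)^{-1}=y$, so the normalizer computation is not the only place it enters.
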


The semidirect decomposition is Theorem~\ref{thm:abelian-splitting}; the
topological-generation statement follows from
Theorem~\ref{thm:partial-density}.  Thus both the kernel and the
quotient are explicit, even at infinite rank.

Right-angled Coxeter groups occur when every vertex group is $C_2$.  In
that case $\FAut(W)=\FInn(W)=1$ and
$\ASpe(W)=\ISpe(W)=\Spe(W)$, so the preceding results recover the
density--closedness mechanism for star-connected graphs of \cite{PaoliniRabideau2026} and extend the results to arbitrary graphs.  In particular, $\Spe(W)$ is topologically
generated by single-component partial conjugations, and star-connectedness
is equivalent to $\Spe(W)=\cl{\Inn(W)}$.  We record these consequences only
as applications in the final subsection.

\subsection{Organization of the paper}
Section~\ref{sec:preliminaries} collects facts on graph products, parabolic subgroups, the
pointwise convergence topology, and acylindrical hyperbolicity used later.
Section~\ref{sec:ah} proves that parabolically full subgroups are virtually cyclic or acylindrically hyperbolic and classifies acylindrically hyperbolic graph products in both the irreducible and reducible cases. Section~\ref{sec:special} introduces the four
special automorphism groups, proves their basic closedness properties, and
establishes topological generation by finite-support factorwise
automorphisms and single-component partial conjugations.
Section~\ref{sec:finite-star-base} treats star-connected graphs and finite
star bases, obtaining density, exact equality, and 
discreteness criteria.  Section~\ref{sec:component-witness} replaces finite
star bases by finite component witness sets and proves the corresponding
results for arbitrary graphs.  Finally, Section~\ref{sec:special-abelian}
proves the canonical semidirect decomposition for abelian vertex groups and
briefly records the consequences for right-angled Coxeter groups.
\section{Preliminaries}\label{sec:preliminaries}

\subsection{Graphs and finite star bases}

Throughout, a graph $\Gamma=(V(\Gamma),R_\Gamma)$ is simplicial: the relation
$R_\Gamma$ is symmetric and irreflexive.  For $v\in V(\Gamma)$, put
\[
 N_\Gamma(v)=\{w\in V(\Gamma):vR_\Gamma w\},
 \qquad
 N_\Gamma^*(v)=N_\Gamma(v)\cup\{v\}.
\]
We omit the subscript $\Gamma$ in $R_\Gamma, N_\Gamma$ etc., when the graph is clear.  If $A\subseteq V(\Gamma)$,
then $\Gamma_A$ denotes the induced subgraph on $A$.  The complement graph is
denoted by $\Gamma^{\mathrm c}$. The \textit{link} of a subset $S \subset V(\Gamma)$ is \begin{align*}
 \link_\Gamma(S) &=\{z\in V(\Gamma)\setminus S:
                   z\text{ is adjacent to every }s\in S\} \\
                &= \bigcap_{s \in S} N_{\Gamma}(s). 
                   \end{align*}

\begin{definition}\label{def:universal-star-base}
The set of \emph{universal vertices} of $\Gamma$ is
$Z_\Gamma=\{v\in V(\Gamma):N^*(v)=V(\Gamma)\}$.  A subset
$A\subseteq V(\Gamma)$ is a \emph{star base} if
$\bigcap_{a\in A}N^*(a)=Z_\Gamma$.
It is a \emph{finite star base} if, in addition, $A$ is finite.  The empty
intersection is understood to be $V(\Gamma)$.
\end{definition}

\begin{definition}\label{def:irreducible-star-connected}
The graph $\Gamma$ is \emph{irreducible} if $\Gamma^{\mathrm c}$ is connected.
$\Gamma$ is \emph{star-connected} if, for every $v\in V(\Gamma)$, the graph
$\Gamma\setminus N^*(v)$ has at most one connected component.
\end{definition}

\begin{observation}\label{obs_irr} Notice that the empty graph is allowed in the definition of star-connectedness.  If
$\Gamma$ is irreducible and has at least two vertices, then
$Z_\Gamma=\varnothing$ and every vertex has a non-neighbor.
\end{observation}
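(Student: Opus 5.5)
The observation has two claims, and I will prove them in order. The first, that the empty graph is allowed in the definition of star-connectedness, is a remark about conventions: $\Gamma\setminus N^*(v)$ may have no vertices at all, which happens exactly when $v$ is universal, and then it has zero components. It therefore satisfies ``at most one connected component''. Nothing needs proving beyond reading the definition with this in mind.

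For the second claim, suppose $\Gamma$ is irreducible with at least two vertices, and suppose for contradiction that some $v\in Z_\Gamma$. Then $N^*(v)=V(\Gamma)$, so $v$ is adjacent in $\Gamma$ to every other vertex. Hence $v$ is an isolated vertex of $\Gamma^{\mathrm c}$.

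Because $\Gamma$ has at least two vertices, there is some $w\neq v$. Since $v$ is isolated in $\Gamma^{\mathrm c}$, no path in $\Gamma^{\mathrm c}$ joins $v$ to $w$. So $\Gamma^{\mathrm c}$ is disconnected, which contradicts irreducibility. It follows that $Z_\Gamma=\varnothing$.

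The final clause is a restatement of this. A vertex $v$ has a non-neighbor exactly when $N^*(v)\neq V(\Gamma)$, that is, when $v\notin Z_\Gamma$. Since $Z_\Gamma$ is empty, every vertex has a non-neighbor.

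There is no real obstacle here. The only point needing care is the role of the hypothesis of at least two vertices: the one-vertex graph is irreducible, since its complement is connected, yet its single vertex is universal. That hypothesis is what supplies the vertex $w$ in the argument above.
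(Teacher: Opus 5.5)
Your proof is correct and is the evident argument the paper leaves implicit: it gives no proof of this observation. The key point is that a universal vertex is isolated in $\Gamma^{\mathrm c}$, which forces $\Gamma^{\mathrm c}$ to be disconnected once there are at least two vertices. You also read the convention about the empty graph correctly: zero components counts as at most one.
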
 

\begin{definition}\label{def:component-witness}
A subset $S\subseteq V(\Gamma)$ is a \emph{component witness set} if, for
every $u\in V(\Gamma)$ and every connected component $C$ of
$\Gamma\setminus N^*(u)$, one has $C\cap S\neq\varnothing$.
\end{definition}

The notions of component witness set and star base are related precisely by
star-connectedness.

\begin{lemma}\label{lem:witness-star-base}
Let $\Gamma$ be an arbitrary graph.  Every component witness set is a star
base.  Conversely, if $\Gamma$ is star-connected, then every star base is a
component witness set.
\end{lemma}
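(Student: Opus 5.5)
For the first assertion, fix a component witness set $S$ and show $\bigcap_{s\in S}N^*(s)\subseteq Z_\Gamma$; the reverse inclusion is automatic, since a universal vertex lies in every star. Let $v\in\bigcap_{s\in S}N^*(s)$ and suppose, for contradiction, that $v\notin Z_\Gamma$. Then $\Gamma\setminus N^*(v)$ is non-empty, so it has at least one connected component $C$. By the witness property there is $s\in C\cap S$. Now $s\notin N^*(v)$, but also $v\in N^*(s)$, and the relation ``$x\in N^*(y)$'' is symmetric ($x=y$ or $x$ adjacent to $y$). Hence $s\in N^*(v)$, a contradiction. So $v\in Z_\Gamma$ and $S$ is a star base.

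For the converse, assume $\Gamma$ is star-connected and let $A$ be a star base. Fix $u\in V(\Gamma)$ and a component $C$ of $\Gamma\setminus N^*(u)$. Star-connectedness says there is at most one component, so $C=\Gamma\setminus N^*(u)$, which is non-empty because it contains a component. Thus $u\notin Z_\Gamma=\bigcap_{a\in A}N^*(a)$, so some $a\in A$ has $u\notin N^*(a)$. By symmetry again $a\notin N^*(u)$, so $a\in V(\Gamma)\setminus N^*(u)=C$. Hence $C\cap A\neq\varnothing$, and $A$ is a component witness set.

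There is no real obstacle here; the argument is pure bookkeeping. Two points need explicit care: the symmetry of the closed-neighbourhood relation, and the degenerate cases. When $\Gamma\setminus N^*(u)$ is empty there are no components and the condition is vacuous. When $\Gamma$ is complete we have $A=\varnothing$, and this is handled by the convention that the empty intersection is $V(\Gamma)=Z_\Gamma$.
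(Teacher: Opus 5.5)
Your proof is correct and follows the paper's argument essentially verbatim; the only differences are that you phrase the first part by contradiction and spell out the symmetry $x\in N^*(y)\iff y\in N^*(x)$, which the paper uses silently. One harmless slip in your closing remark: when $\Gamma$ is complete, every subset of $V(\Gamma)$ is a star base, not just $\varnothing$ (what is true is that $A=\varnothing$ being a star base forces $\Gamma$ to be complete), but this does not affect the argument, since all the deleted stars are then empty.
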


\begin{proof}
Let $S\subseteq V(\Gamma)$ be a component witness set, and let
$u\notin Z_\Gamma$.  Choose a connected component $C$ of
$\Gamma\setminus N^*(u)$ and an element $s\in C\cap S$.  Then
$u\notin N^*(s)$, so $u\notin\bigcap_{s\in S}N^*(s)$.  Hence $S$ is a star
base.

Conversely, suppose that $\Gamma$ is star-connected and let $A$ be a star
base.  If $u$ is universal, then $\Gamma\setminus N^*(u)=\varnothing$.
Otherwise, this graph has exactly one connected component, and the star-base
condition gives $a\in A\setminus N^*(u)$.  Thus that component meets $A$,
and $A$ is a component witness set.
\end{proof}

\subsection{Graph products and normal forms}

\begin{definition}\label{def:graph-product}
Let $\cG=(G_v)_{v\in V(\Gamma)}$ be a family of non-trivial groups.  The
\emph{graph product} $G=\Gamma\cG$ is defined to be
\[
 G=\left(\mathop{*}_{v\in V(\Gamma)}G_v\right)
   \bigg/
   \left\langle\!\left\langle
      [g,h]:g\in G_v,\ h\in G_w,\ vR_\Gamma w
   \right\rangle\!\right\rangle.
\]
The groups $G_v$ are the \emph{vertex groups}.  For
$A\subseteq V(\Gamma)$, the subgroup generated by the $G_v$, $v\in A$, is
denoted by $G_A$ and it is called a \emph{standard parabolic subgroup}.  A conjugate
of a standard parabolic subgroup is a \emph{parabolic subgroup}.
\end{definition}

A non-trivial element of a vertex group is a \emph{syllable}.  A syllable word
$s_1\cdots s_n$, where $s_i\in G_{v_i}\setminus\{1\}$, is \emph{reduced} if
its length cannot be shortened by the following operations: deleting a
trivial syllable, combining two consecutive syllables in the same vertex
group, and interchanging consecutive syllables whose vertex groups commute (also called syllable shuffling).
The normal-form theorem states that every element has a reduced
representative and that any two reduced representatives differ by a sequence
of interchanges of adjacent commuting syllables; see
\cite[Theorem~2.2]{AntolinMinasyan2015}, where the result is stated for graphs
of arbitrary cardinality.

\begin{notation}\label{not:support}
For $g\in G$, let $\supp(g)$ be the set of vertices whose groups occur in a
reduced word for $g$.  Let $\first(g)$, respectively $\last(g)$, be the set of
vertices that can occur as the first, respectively last, syllable of a
reduced representative of $g$.  For $F\subseteq G$, put
$\supp(F)=\bigcup_{f\in F}\supp(f)$.
\end{notation}

\begin{observation}\label{eq:support-product}
The normal-form theorem immediately gives
\[
 \supp(gh)\subseteq\supp(g)\cup\supp(h)\qquad(g,h\in G).
\]
In particular, reduction can delete vertex labels but cannot create a label
absent from both factors.
\end{observation}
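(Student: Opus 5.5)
The statement to prove is Observation~\ref{eq:support-product}: $\supp(gh)\subseteq\supp(g)\cup\supp(h)$. The plan is to read this off the normal-form theorem, using one key fact. The set $\supp$ is well defined, because any two reduced representatives differ only by interchanges of adjacent commuting syllables, and these interchanges preserve the multiset of vertex labels. So $\supp(x)$ can be computed from any single reduced word representing $x$.

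First, choose reduced words $s_1\cdots s_n$ for $g$ and $t_1\cdots t_m$ for $h$. Their concatenation $s_1\cdots s_nt_1\cdots t_m$ is a syllable word representing $gh$. Every syllable in it has its label in $\supp(g)\cup\supp(h)$.

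Next, apply the reduction operations until the word becomes reduced. This terminates, since deletions and combinations strictly shorten the word. To make termination airtight, we can instead use the existence part of the normal-form theorem applied to a word of minimal length in the finite set of words obtainable from the concatenation by these moves. Each move respects the label set:
\begin{itemize}
\item deleting a trivial syllable only removes a label;
\item combining two consecutive syllables of $G_v$ yields a syllable of $G_v$ (or a trivial one, which is then deleted);
\item shuffling commuting syllables permutes the labels.
\end{itemize}
So at every stage the set of labels stays inside $\supp(g)\cup\supp(h)$.

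Finally, the reduced word we end with represents $gh$, so its label set is $\supp(gh)$ by well-definedness, and this gives the inclusion. The only point needing care is that a word which cannot be shortened by these moves really is a reduced word in the sense of the normal-form theorem. This holds by the very definition of reduced used in the paper, so I expect no genuine obstacle. The ``in particular'' clause of the observation is just a restatement of the invariance of the label set under the moves, as checked in the list above.
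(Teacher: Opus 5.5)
Your argument is correct and is exactly the unpacking of the paper's one-line justification: you concatenate reduced words for $g$ and $h$, observe that each reduction move keeps the label set inside $\supp(g)\cup\supp(h)$, and use the uniqueness part of the normal-form theorem to identify the labels of the resulting reduced word with $\supp(gh)$. One minor point: a minimal-length word among those obtainable from the concatenation is already reduced by the paper's definition, so your appeal to the existence part of the normal-form theorem at that step is unnecessary, though harmless.
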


\begin{notation}\label{retraction}
For every $A\subseteq V(\Gamma)$, the presentation gives a canonical
retraction $\rho_A:G\longrightarrow G_A$ which is the identity on $G_A$
and kills every $G_v$ with $v\notin A$.
\end{notation}

\begin{lemma}\label{lem:reduced-conjugate}
Fix $x\in V(\Gamma)$ and $g\in G$.  Suppose
$\last(g)\cap N^*(x)=\varnothing$.
Then, for every $1\neq h\in G_x$, the concatenation of a reduced word for
$g$, the syllable $h$, and the inverse word of $g$ is reduced.  Consequently,
$\supp(ghg^{-1})=\supp(g)\cup\{x\}$.
\end{lemma}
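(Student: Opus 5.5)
The plan is to argue by contradiction against a length reduction, using the normal-form theorem (every element has a reduced representative, and two reduced representatives differ only by shuffling adjacent commuting syllables). Fix a reduced word $s_1\cdots s_n$ for $g$, with $s_i\in G_{v_i}$. Suppose the word
\[
 w = s_1\cdots s_n\, h\, s_n^{-1}\cdots s_1^{-1}
\]
is not reduced. A syllable word fails to be reduced exactly when, after some sequence of shuffles, two syllables from the same vertex group become adjacent. Equivalently, there are two positions carrying the same vertex label such that every syllable strictly between them has a vertex group commuting with that label. I would use this standard reformulation of reducedness, which follows from the normal-form theorem as in \cite{AntolinMinasyan2015}.

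There are three cases for such a pair of positions. In the first, both positions lie inside $s_1\cdots s_n$; this contradicts reducedness of $g$. In the second, both lie inside $s_n^{-1}\cdots s_1^{-1}$, which is reduced as the inverse of a reduced word, so this is also a contradiction.

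The remaining case is the main point.

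\textbf{Pair involving $h$.} Suppose one position is $h$ and the other is some $s_i$ with $v_i=x$. Then every $s_j$ with $j>i$ commutes with $x$, so $s_i$ can be shuffled to the end of $g$'s word. This gives $x\in\last(g)$, contradicting $\last(g)\cap N^*(x)=\varnothing$. The symmetric situation on the right uses the same argument applied to $g^{-1}$, noting that $\first(g^{-1})=\last(g)^{-1}$ as vertex sets.

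\textbf{Pair straddling $h$.} Suppose the positions are $s_i$ in the left word and $s_j^{-1}$ in the right word, with $v_i=v_j=u$. Every syllable between them commutes with $u$. This includes $h$, so $u\in N(x)$. It also includes $s_{i+1},\dots,s_n$, so $s_i$ shuffles to the end of $g$'s word and $u\in\last(g)$. Then $u\in\last(g)\cap N^*(x)$, a contradiction.

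Hence $w$ is reduced, and its support is read off directly as $\supp(g)\cup\{x\}$. The main obstacle is the justification of the reformulation of non-reducedness, which I would take from the normal-form theorem as cited; the case analysis itself is routine.
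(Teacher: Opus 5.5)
Your proof is correct and follows the same route as the paper's. Both arguments first rule out any shortening inside either copy of $g$. They then exclude a cancellation with $h$, which would force $x\in\last(g)$, and a cancellation straddling $h$, which would put a vertex in $\last(g)\cap N(x)$. Your explicit pair criterion for reducedness just makes precise the paper's remark that a syllable would have to shuffle past everything to its right. One small notational fix: you mean $\first(g^{-1})=\last(g)$, since vertex sets have no inverses.
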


\begin{proof}
Let $s_1\cdots s_n$ be a reduced word for $g$.  The words
$s_1\cdots s_n$ and $s_n^{-1}\cdots s_1^{-1}$ are reduced, so any shortening
of $s_1\cdots s_n\,h\,s_n^{-1}\cdots s_1^{-1}$ would have to involve
syllables lying on opposite sides of the central
syllable $h$.

A syllable from the left copy of $g$ can combine with $h$ only if it belongs
to $G_x$ and can be shuffled to the right end of a reduced word for $g$.
This would put $x$ in $\last(g)$, contrary to the hypothesis.  Similarly,
no syllable from the right copy of $g^{-1}$ can combine with $h$.

Finally, suppose that a syllable from the left copy of $g$ could combine
with a syllable from the right copy.  In order to reach the central
syllable, it would first have to shuffle past all syllables to its right in
the left copy of $g$, and hence its vertex would belong to $\last(g)$.  It
would also have to commute with $h$, so this vertex would lie in $N(x)$.
This contradicts $\last(g)\cap N^*(x)=\varnothing$.  Thus the displayed
concatenation is reduced, and its support is $\supp(g)\cup\{x\}$.
\end{proof}

\subsection{Parabolic calculus}

We collect below facts on parabolic subgroups used in the sequel.  All of them hold
for graphs of arbitrary cardinality.

\begin{fact}\label{fact:parabolic-basics}
Let $A,B\subseteq V(\Gamma)$ and $v\in V(\Gamma)$.
\begin{enumerate}[(1)]
\item $G_A\cap G_B=G_{A\cap B}$; more generally, arbitrary intersections of
standard parabolics are obtained by intersecting their vertex sets.
\item The normalizer of $G_v$ is $N_G(G_v)=G_{N^*(v)}$.
\item The centralizer of $G_v$ is
$C_G(G_v)=Z(G_v)\times G_{N(v)}$, and, for $1\neq x\in G_v$,
$C_G(x)=C_{G_v}(x)\times G_{N(v)}$.
\item The center of $G$ is $Z(G)=\bigoplus_{v\in Z_\Gamma}Z(G_v)$.
\item If $K$ is parabolic and $gKg^{-1}\leq K$, then
$gKg^{-1}=K$.
\end{enumerate}
\end{fact}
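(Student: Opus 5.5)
The plan is to prove every item by one reduction. Each statement involves finitely many group elements, and each element has finite support. So I would pass to a finite induced subgraph through the canonical retraction $\rho_F$ of Notation~\ref{retraction}, and there invoke the classical finite-graph results of Green and Antol\'in--Minasyan \cite{AntolinMinasyan2015}. The normal-form theorem, which holds in arbitrary cardinality, makes $\supp(g)$ well defined. It gives the basic dictionary $g\in G_A\iff\supp(g)\subseteq A$: a word in letters from $A$ reduces to a reduced word with letters in $A$, and any two reduced words share their letters. I would also use that for $F\subseteq V(\Gamma)$ the subgroup $G_F$ is canonically the graph product over $\Gamma_F$, and that $\rho_F(G_A)=G_{A\cap F}$.

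\emph{Item (1).} This is immediate from the dictionary, for any family $(A_i)$. We have $g\in\bigcap_i G_{A_i}$ if and only if $\supp(g)\subseteq A_i$ for all $i$, that is, if and only if $\supp(g)\subseteq\bigcap_i A_i$.

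\emph{Items (2) and (3).} The inclusions $\supseteq$ are clear from the defining relations. For the reverse inclusions, take $g$ normalizing $G_v$ (respectively centralizing $G_v$ or some $x\in G_v$), and put $F=\supp(g)\cup\{v\}$, which is finite. Then $g\in G_F$ normalizes (or centralizes) $G_v$ inside the finite graph product $G_F$. The finite-graph result then gives $g\in G_{N^*_{\Gamma_F}(v)}$ (respectively $g\in Z(G_v)\times G_{N_{\Gamma_F}(v)}$ or $C_{G_v}(x)\times G_{N_{\Gamma_F}(v)}$), and these subgroups are contained in the corresponding ones for $\Gamma$.

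\emph{Item (4).} Let $z\in Z(G)$. For every $w\in V(\Gamma)$, item (3) gives $z\in Z(G_w)\times G_{N(w)}$, so $\supp(z)\subseteq N^*(w)$. Intersecting over all $w$ gives $\supp(z)\subseteq Z_\Gamma$, so $z\in G_{Z_\Gamma}$. Since $\Gamma_{Z_\Gamma}$ is complete, $G_{Z_\Gamma}$ is the restricted direct product $\bigoplus_{v\in Z_\Gamma}G_v$. Elements of $G_{Z_\Gamma}$ commute with every $G_w$ for $w\notin Z_\Gamma$. Hence $z$ is central if and only if each coordinate is central, which gives $\bigoplus_{v\in Z_\Gamma} Z(G_v)$.

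\emph{Item (5).} I expect this to be the main obstacle, since $A$ may be infinite and the finite case does not apply directly. Conjugating, we may assume $K=G_A$, so we are given $gG_Ag^{-1}\le G_A$. Suppose the inclusion were strict, and pick $k\in G_A\setminus gG_Ag^{-1}$. Let $F=\supp(g)\cup\supp(k)$, which is finite, and apply $\rho_F$. Because $\rho_F(g)=g$, we get
\[
gG_{A\cap F}g^{-1}=\rho_F(gG_Ag^{-1})\le\rho_F(G_A)=G_{A\cap F}.
\]
The finite-graph version of (5) inside $G_F$ then forces $gG_{A\cap F}g^{-1}=G_{A\cap F}$. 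Since $k=\rho_F(k)\in G_{A\cap F}$, this gives $k\in gG_{A\cap F}g^{-1}\subseteq gG_Ag^{-1}$, a contradiction.

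The only point needing care is that the finite-graph statements are applied to the graph product $G_F$ over $\Gamma_F$. This is legitimate because $G_F$ is a retract of $G$ and is itself the graph product over $\Gamma_F$.
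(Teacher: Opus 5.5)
Your argument is correct, but it takes a different route from the paper.

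The paper does no reduction to finite graphs. It cites \cite[Lemma~3.3]{AntolinMinasyan2015}, \cite[Proposition~3.13]{AntolinMinasyan2015} and \cite[Lemma~3.9]{AntolinMinasyan2015} for (1), (2) and (5), relying on the fact that Antol\'in--Minasyan already work over graphs of arbitrary cardinality. It then \emph{derives} (3) rather than citing it. For $g\in C_G(x)$ one has $1\neq x\in gG_vg^{-1}\cap G_v$, and the intersection formula \cite[Proposition~3.4]{AntolinMinasyan2015} forces this intersection to be $G_v$. Item (5) upgrades $G_v\leq gG_vg^{-1}$ to equality, so $g\in N_G(G_v)=G_v\times G_{N(v)}$ by (2), and writing $g=ab$ finishes. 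Item (4) is essentially your argument, phrased via normalizers and (1), (2).

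You instead prove (1) directly from the normal form theorem and transfer (2), (3), (5) from the finite-graph case. For (2) and (3) the transfer is formal, since $g$ already lies in $G_F$ with $F=\supp(g)\cup\{v\}$ and $N^*_{\Gamma_F}(v)\subseteq N^*_\Gamma(v)$. The one substantive step is (5), where $G_A$ may be infinitely generated. Your witness $k\in G_A\setminus gG_Ag^{-1}$, together with the retraction $\rho_F$ for $F=\supp(g)\cup\supp(k)$, handles this correctly: $\rho_F(g)=g$, $\rho_F(G_A)=G_{A\cap F}$, and $k\in G_A\cap G_F=G_{A\cap F}$.

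Your route makes the statement independent of whether the cited results survive in infinite cardinality. This is the same finite-approximation idea the paper uses later in Theorem~\ref{thm:partial-density}. Its cost is that the centralizer formula in (3) enters as an unspecified finite-graph black box, whereas the paper proves it from (2), (5) and the parabolic intersection formula.
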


\begin{proof}
Item~(1) is \cite[Lemma~3.3]{AntolinMinasyan2015}, item~(2) is the case
$K=G_v$ of \cite[Proposition~3.13]{AntolinMinasyan2015}, and item~(5) is
\cite[Lemma~3.9]{AntolinMinasyan2015}.

Let $1\neq x\in G_v$ and let $g\in C_G(x)$.  Then
$x\in gG_vg^{-1}\cap G_v$.  Apply
\cite[Proposition~3.4]{AntolinMinasyan2015} with both standard parabolics
equal to $G_v$.  The intersection is $hG_Ph^{-1}$ for some
$P\subseteq\{v\}$ and some $h\in G_v$.  Non-triviality forces
$P=\{v\}$, and therefore $gG_vg^{-1}\cap G_v=hG_vh^{-1}=G_v$.
Thus $G_v\leq gG_vg^{-1}$, or equivalently
$g^{-1}G_vg\leq G_v$.  Item~(5) gives equality, so
$g\in N_G(G_v)=G_{N^*(v)}=G_v\times G_{N(v)}$.  Writing $g=ab$ with
$a\in G_v$ and $b\in G_{N(v)}$, and observing that $b$ commutes with $x$, we
obtain $a\in C_{G_v}(x)$.  The reverse inclusion is immediate, and hence
$C_G(x)=C_{G_v}(x)\times G_{N(v)}$.
Intersecting this equality over $1\neq x\in G_v$ gives the formula for
$C_G(G_v)$.

Finally, if $z\in Z(G)$, then $z$ normalizes every vertex group.  By items
(1) and~(2), $z\in\bigcap_{v\in V(\Gamma)}G_{N^*(v)}
=G_{\bigcap_{v\in V(\Gamma)}N^*(v)}=G_{Z_\Gamma}$.
The standard parabolic $G_{Z_\Gamma}$ is the restricted direct product of the
universal vertex groups, and an element of it is central in $G$ precisely
when each of its non-trivial coordinates lies in the center of the
corresponding vertex group.  This proves item~(4).
\end{proof}

\begin{lemma}\label{lem:vertex-conjugates}
Let $v,w\in V(\Gamma)$ and $a,b\in G$.  If
$aG_va^{-1}\cap bG_wb^{-1}\neq 1$,
then $v=w$ and $aG_va^{-1}=bG_vb^{-1}$.
\end{lemma}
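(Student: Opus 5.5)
We reduce to the case $b=1$ by conjugating: replacing $a$ by $b^{-1}a$, the hypothesis becomes $cG_vc^{-1}\cap G_w\neq 1$ with $c=b^{-1}a$, and the conclusion becomes $v=w$ and $cG_vc^{-1}=G_v$. The natural tool is \cite[Proposition~3.4]{AntolinMinasyan2015}, already used in the proof of Fact~\ref{fact:parabolic-basics}(3). It describes the intersection of two parabolic subgroups $xG_Ax^{-1}\cap yG_By^{-1}$ as a parabolic subgroup $hG_Ph^{-1}$. Applying it with $A=\{v\}$ and $B=\{w\}$ yields $cG_vc^{-1}\cap G_w=hG_Ph^{-1}$. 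Here $P$ is contained in both $\{v\}$ and $\{w\}$ (in the Antolín--Minasyan formulation, $P$ is $\{v\}\cap\{w\}$ up to the relevant normalization), and $h$ is some element of $G$.

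Non-triviality of the intersection forces $P\neq\varnothing$, hence $P=\{v\}=\{w\}$, so $v=w$. If the exact form of Proposition~3.4 is less explicit about $P$, I would instead argue directly via retractions. Take $1\neq z\in cG_vc^{-1}\cap G_w$ and apply the retraction $\rho_{\{v\}}$ of Notation~\ref{retraction}. It sends $cG_vc^{-1}$ to $\rho_{\{v\}}(c)G_v\rho_{\{v\}}(c)^{-1}=G_v$, and it kills $G_w$ if $w\neq v$. So if $v\neq w$, then $\rho_{\{v\}}(z)=1$. This alone does not give a contradiction, since a conjugate $cxc^{-1}$ with $x\in G_v$ nontrivial may still retract trivially only if $x=1$. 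Indeed $\rho_{\{v\}}(cxc^{-1})=\rho(c)x\rho(c)^{-1}\neq 1$, which is exactly the contradiction needed. Thus $v=w$ follows cleanly from the retraction argument, and I would present that version.

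Now assume $v=w$, so that $K:=cG_vc^{-1}\cap G_v\neq 1$. Following the proof of Fact~\ref{fact:parabolic-basics}(3), Proposition~3.4 gives $K=hG_Ph^{-1}$ with $P\subseteq\{v\}$ and $h\in G_v$. Non-triviality forces $P=\{v\}$, so $K=G_v$. Hence $G_v\leq cG_vc^{-1}$, i.e.\ $c^{-1}G_vc\leq G_v$, and Fact~\ref{fact:parabolic-basics}(5) upgrades this to equality. Therefore $cG_vc^{-1}=G_v$, and translating back gives $aG_va^{-1}=bG_vb^{-1}$.

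The main obstacle is the step $cG_vc^{-1}\cap G_v\neq1\Rightarrow G_v\leq cG_vc^{-1}$, which relies on the precise statement of \cite[Proposition~3.4]{AntolinMinasyan2015} (specifically, that the conjugator $h$ can be taken inside $G_v$). Since the paper already invokes it in exactly this form in the proof of Fact~\ref{fact:parabolic-basics}, I would simply cite it the same way.
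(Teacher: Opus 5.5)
Your proposal is correct and follows the paper's proof: you reduce to $b=1$, use \cite[Proposition~3.4]{AntolinMinasyan2015} with $h\in G_v$ to get $cG_vc^{-1}\cap G_v=G_v$, and then upgrade $c^{-1}G_vc\leq G_v$ to equality by Fact~\ref{fact:parabolic-basics}(5). The one deviation is how you get $v=w$: you use the retraction $\rho_{\{v\}}$ rather than reading $P\subseteq\{v\}\cap\{w\}$ off the proposition, since a nontrivial $cxc^{-1}$ retracts to the nontrivial element $\rho_{\{v\}}(c)x\rho_{\{v\}}(c)^{-1}$, while $G_w$ is killed when $w\neq v$. This is a valid and slightly more self-contained substitute, but you should delete the sentence saying this ``does not give a contradiction,'' since it contradicts your own correct conclusion.
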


\begin{proof}
Conjugating by $b^{-1}$, we may assume that the intersection under
consideration is $gG_vg^{-1}\cap G_w$.  By
\cite[Proposition~3.4]{AntolinMinasyan2015}, there are
$P\subseteq\{v\}\cap\{w\}$ and $h\in G_w$ such that
$gG_vg^{-1}\cap G_w=hG_Ph^{-1}$.
Since the intersection is non-trivial, necessarily $v=w$ and $P=\{v\}$.
As $h\in G_v$, this gives $gG_vg^{-1}\cap G_v=hG_vh^{-1}=G_v$,
and hence $G_v\leq gG_vg^{-1}$.  Equivalently,
$g^{-1}G_vg\leq G_v$, so Fact~\ref{fact:parabolic-basics}(5), applied to
the parabolic subgroup $G_v$, yields $g^{-1}G_vg=G_v$.  Therefore
$gG_vg^{-1}=G_v$, which proves the claim.
\end{proof}

We will also use the following consequence of a lemma
of Genevois and Martin to simplify the proof of
Lemma~\ref{lem:support-propagation}.

\begin{lemma}\label{lem:parabolic-factorization}
Let $x,y\in V(\Gamma)$ and let $q\in G$.  If
$qG_yq^{-1}\leq G_{N^*(x)}$, then $q=hn$ for some
$h\in G_{N^*(x)}$ and $n\in G_{N^*(y)}$.
\end{lemma}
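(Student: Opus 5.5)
We prove the lemma by combining the intersection-of-parabolics calculus of Antolín--Minasyan with the normalizer computation in Fact~\ref{fact:parabolic-basics}(2). The idea is to find $h\in G_{N^*(x)}$ such that $h^{-1}q$ normalizes $G_y$. Then $n:=h^{-1}q$ lies in $N_G(G_y)=G_{N^*(y)}$, and $q=hn$ as required.

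\textbf{Step 1: conjugate into a standard parabolic.} Put $K=G_{N^*(x)}$. Since $qG_yq^{-1}\leq K$, we have
\[
qG_yq^{-1}=qG_yq^{-1}\cap K .
\]
By \cite[Proposition~3.4]{AntolinMinasyan2015}, applied to the standard parabolics $G_y$ and $G_{N^*(x)}$, this intersection has the form $hG_Ph^{-1}$ for some $h\in K$ and some $P\subseteq\{y\}\cap N^*(x)$. We take that proposition in its general form for two distinct standard parabolics; if it is only available in a different shape, we use the normal-form description of $g G_A g^{-1}\cap G_B$ as a conjugate by an element of $G_B$ of a standard parabolic on a subset of $A\cap B$. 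Since vertex groups are non-trivial, the intersection is non-trivial, so $P=\{y\}$ and in particular $y\in N^*(x)$. This gives
\[
qG_yq^{-1}=hG_yh^{-1}\quad\text{with } h\in G_{N^*(x)}.
\]

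\textbf{Step 2: conclude.} Set $n=h^{-1}q$. Then $nG_yn^{-1}=G_y$, so Fact~\ref{fact:parabolic-basics}(2) gives $n\in N_G(G_y)=G_{N^*(y)}$. Hence $q=hn$ with $h\in G_{N^*(x)}$ and $n\in G_{N^*(y)}$.

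\textbf{Main obstacle.} The only delicate point is Step 1: we must ensure that the conjugator produced by Proposition~3.4 can be taken inside $G_{N^*(x)}$ rather than merely in $G$. This is exactly the form used already in the proof of Lemma~\ref{lem:vertex-conjugates}, where the conjugator $h$ lies in the second standard parabolic. If only a weaker form were available, we would fall back on Genevois--Martin's lemma that a parabolic contained in a standard parabolic $G_B$ is conjugate to a standard one by an element of $G_B$. That lemma yields $h$ directly.
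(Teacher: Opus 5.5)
Your proof is correct, but it uses a different key lemma from the paper's. The paper rewrites the hypothesis as $G_y\leq q^{-1}G_{N^*(x)}q$ and applies Genevois--Martin's Lemma~3.17 \cite{GenevoisMartin2019} directly. That lemma describes the conjugators realizing a containment of parabolics, and it gives $q^{-1}\in G_yG_{N(y)}G_{N^*(x)}=G_{N^*(y)}G_{N^*(x)}$ in one step.

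You instead combine two Antol\'in--Minasyan results:
\begin{itemize}
\item the intersection formula \cite[Proposition~3.4]{AntolinMinasyan2015}, with the conjugator in the second standard parabolic, to get $qG_yq^{-1}=hG_yh^{-1}$ with $h\in G_{N^*(x)}$;
\item the normalizer formula $N_G(G_y)=G_{N^*(y)}$, to place $h^{-1}q$ in $G_{N^*(y)}$.
\end{itemize}
This is exactly the pattern of the paper's own proofs of Lemma~\ref{lem:vertex-conjugates} and Fact~\ref{fact:parabolic-basics}(3). The form of Proposition~3.4 you need, with $h\in G_B$, is the one the paper already uses there. So the hedging in your ``main obstacle'' paragraph and the Genevois--Martin fallback are unnecessary.

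Your route relies only on Antol\'in--Minasyan results, which are stated for graphs of arbitrary cardinality. You therefore avoid the paper's footnote checking that Genevois--Martin's Lemma~3.17 does not use finiteness of the graph. The paper's route is a one-line citation, and it gains uniformity, since the same Lemma~3.17 is reused immediately for Lemma~\ref{lem:alignment}.
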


\begin{proof}
The displayed inclusion is equivalent to
$G_y\leq q^{-1}G_{N^*(x)}q$.  Applying
\cite[Lemma~3.17]{GenevoisMartin2019}\footnote{Notice that \cite[Lemma~3.17]{GenevoisMartin2019} is explicitly stated for an
arbitrary simplicial graph and its proof does not use the finiteness of the graph.} with
$\Lambda_1=\{y\}$, $\Lambda_2=N^*(x)$, and conjugator $q^{-1}$ gives
$q^{-1}\in G_yG_{N(y)}G_{N^*(x)}=G_{N^*(y)}G_{N^*(x)}$.
Taking inverses gives the required factorization.
\end{proof}

Another useful consequence of the lemma is the following.
\begin{lemma}\label{lem:alignment}
Suppose $S,T \subset V(\Gamma)$ and $p,q \in G$ are such that
$pG_Sp^{-1}\leq qG_Tq^{-1}$.
Then $S\subseteq T$.  Moreover, there is an element
$\ell\in G_{\link_\Gamma(S)}$ such that
$qG_Tq^{-1}=p\ell G_T\ell^{-1}p^{-1}$.
\end{lemma}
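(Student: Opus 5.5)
First conjugate by $q^{-1}$ to reduce the statement to the inclusion $gG_Sg^{-1}\leq G_T$, where $g=q^{-1}p$. We then apply \cite[Lemma~3.17]{GenevoisMartin2019} (as in the proof of Lemma~\ref{lem:parabolic-factorization}) with $\Lambda_1=S$, $\Lambda_2=T$ and conjugator $g$. That lemma says that if $gG_Sg^{-1}\leq G_T$, then $S\subseteq T$ and
\[
 g\in G_T\,G_{\link_\Gamma(S)}\,G_S .
\]
If the cited lemma only gives the membership $g\in G_TG_{\link(S)}G_S$, we recover $S\subseteq T$ as follows. Since $gG_Sg^{-1}\leq G_T$, applying the retraction $\rho_T$ shows that $\rho_T(g)G_S\rho_T(g)^{-1}\leq G_T$. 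A vertex $s\in S\setminus T$ would then give a non-trivial element of $G_s$ conjugated into $G_T$. This is impossible, because $G_s$ and $G_T$ have conjugates meeting trivially: by \cite[Proposition~3.4]{AntolinMinasyan2015}, $gG_sg^{-1}\cap G_T$ is a conjugate of $G_{P}$ with $P\subseteq\{s\}\cap T=\varnothing$. So each $s\in S$ lies in $T$.

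Next, write $g=t\ell' s$ with $t\in G_T$, $\ell'\in G_{\link_\Gamma(S)}$ and $s\in G_S$, and recall $g=q^{-1}p$. The factor $s$ normalizes $G_S$, and we set $\ell=\ell'^{-1}$. Then
\[
 p\ell G_T\ell^{-1}p^{-1} = q\,t\ell' s\,\ell'^{-1} G_T\,\ell' s^{-1}\ell'^{-1}t^{-1}q^{-1}.
\]
We want this to equal $qG_Tq^{-1}$, and we deal with $s$ and $t$ separately.

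Since $S\subseteq T$, we have $s\in G_S\leq G_T$. However, $\ell'$ need not lie in $G_T$, so $s$ has to be moved past $\ell'$. Elements of $G_{\link(S)}$ commute with every $G_v$, $v\in S$, so $\ell'$ commutes with $s$. Hence $\ell' s=s\ell'$, and therefore
\[
 \ell' s\,\ell'^{-1}=s\in G_T .
\]
Thus $t\ell' s\ell'^{-1}=ts\in G_T$, which normalizes $G_T$. The displayed conjugate is therefore $qG_Tq^{-1}$, as required.

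The main obstacle is confirming the exact form of \cite[Lemma~3.17]{GenevoisMartin2019} for arbitrary (infinite) subsets $S$, namely that it yields $S\subseteq T$ and the triple factorization with the link of $S$. In the form used for Lemma~\ref{lem:parabolic-factorization}, the factorization reads $G_SG_{\link(S)}G_T$ for the inverse conjugator. If the lemma is only available in that form, I would apply it to $g^{-1}$, or equivalently to $q^{-1}p$ written in the reverse order, and redo the same commutation bookkeeping.
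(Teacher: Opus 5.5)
Your proposal is correct and follows the paper's proof: the paper likewise applies \cite[Lemma~3.17]{GenevoisMartin2019} to $r=q^{-1}p$, in the equivalent inverted form $r^{-1}\in G_SG_{\link_\Gamma(S)}G_T$ (which also gives $S\subseteq T$ directly). It then absorbs everything except the link factor $\ell$ into $G_T$, because that factor commutes with the $G_S$-factor and $G_S\leq G_T$. Your fallback proof of $S\subseteq T$ via \cite[Proposition~3.4]{AntolinMinasyan2015} is also fine on its own, but drop the retraction sentence before it: since $\rho_T(g)\in G_T$, the inclusion $\rho_T(g)G_S\rho_T(g)^{-1}\leq G_T$ is equivalent to $S\subseteq T$ and does not follow from applying $\rho_T$, whereas the contradiction comes directly from $gG_sg^{-1}\leq gG_Sg^{-1}\leq G_T$.
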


\begin{proof}
Set $r=q^{-1}p$.  The assumed inclusion is equivalent to
$G_S\leq r^{-1}G_Tr$.  Then \cite[Lemma~3.17]{GenevoisMartin2019} gives
$r^{-1}\in G_SG_{\link_\Gamma(S)}G_T$ and $S\subset T$.
Write $r^{-1}=s\ell t$ with $s\in G_S$, $\ell\in G_{\link(S)}$ and
$t\in G_T$. Then $q = pr^{-1} = ps\ell t$. Therefore, \begin{align*}
    qG_T q^{-1} &= ps\ell t G_T t^{-1} \ell^{-1}s^{-1} p^{-1} \\
&= ps\ell G_T \ell^{-1}s^{-1} p^{-1} && (t \in G_T) \\
&=p\ell sG_T s^{-1} \ell^{-1} p^{-1} && (\ell \in C_G(G_S),\; s\in G_S) \\
&= p\ell G_T \ell^{-1} p^{-1} && (s \in G_S \subset G_T).
\end{align*}
\end{proof}

\begin{fact}\label{eq:irreducible-decomposition}
The connected components of $\Gamma^{\mathrm c}$ are the
\emph{irreducible components} of $\Gamma$.  If $(\Gamma_i)_{i\in I}$ are the
corresponding induced subgraphs, then
\[
 G=\bigoplus_{i\in I}G_{V(\Gamma_i)}.
\]
Every element has non-trivial coordinates in only finitely many factors.
\end{fact}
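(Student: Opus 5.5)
The plan is to verify the three assertions of Fact~\ref{eq:irreducible-decomposition} in turn: that the subgroups $G_{V(\Gamma_i)}$ pairwise commute, that they generate $G$, and that the resulting map from the restricted direct sum is injective.

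\emph{Commutation.} If $i\neq j$, $u\in V(\Gamma_i)$ and $w\in V(\Gamma_j)$, then $u$ and $w$ lie in different components of $\Gamma^{\mathrm c}$. Hence they are not adjacent in $\Gamma^{\mathrm c}$, and since $u\neq w$ they are adjacent in $\Gamma$. The defining relations therefore make $G_u$ and $G_w$ commute, so $G_{V(\Gamma_i)}$ and $G_{V(\Gamma_j)}$ commute elementwise.

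\emph{Generation.} The family $(G_{V(\Gamma_i)})_{i\in I}$ generates $G$ because the vertex groups do. Together with the commutation, this shows that the multiplication map $\mu:\bigoplus_{i}G_{V(\Gamma_i)}\to G$ from the restricted direct sum (finitely supported tuples) is a well-defined surjective homomorphism.

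\emph{Injectivity.} I would argue via the universal property of the graph product rather than normal forms. Define a homomorphism $\phi:G\to\bigoplus_i G_{V(\Gamma_i)}$ on generators by sending $g\in G_v$, for $v\in V(\Gamma_i)$, to the tuple whose $i$-th coordinate is $g$ and whose other coordinates are trivial. On each vertex group this is a homomorphism, so it extends to the free product. It respects the commutation relations $[g,h]$ for $v R_\Gamma w$: if $v,w$ lie in the same component, the relation already holds inside $G_{V(\Gamma_i)}$; if they lie in different components, the images lie in distinct coordinates and commute. Thus $\phi$ is well defined. The composites $\mu\circ\phi$ and $\phi\circ\mu$ are the identity on generators (for $\phi\circ\mu$, on the generating set consisting of the coordinate copies of the vertex groups), so both are identities and $\mu$ is an isomorphism.

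The finiteness assertion then follows: each element is a finite word in syllables, hence has nontrivial coordinates only in the finitely many factors met by its support. Alternatively, it is built into the definition of the restricted direct sum. There is no real obstacle here. The only point needing care is that the $i$-th factor really is $G_{V(\Gamma_i)}$ as a subgroup of $G$, with its intrinsic graph-product structure over $\Gamma_i$. This holds because the retraction $\rho_{V(\Gamma_i)}$ of Notation~\ref{retraction} shows that $G_{V(\Gamma_i)}$ is canonically the graph product over the induced subgraph $\Gamma_i$, and that is what was used when checking the relations inside a single component.
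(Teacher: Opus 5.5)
Your proof is correct and complete. The paper states this as a Fact without proof and treats it as immediate from the presentation. Your universal-property argument makes that precise without using normal forms: you define $\phi$ on the free product, check the commutation relations, and verify $\mu\circ\phi$ and $\phi\circ\mu$ on generators.

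One minor point: the closing appeal to the retraction $\rho_{V(\Gamma_i)}$ is not needed for what you prove. If $v,w$ lie in the same component, the relation $[g,h]=1$ already holds in $G$, and hence in the subgroup $G_{V(\Gamma_i)}$. The retraction would only matter if you also wanted to identify each factor with the abstract graph product over $\Gamma_i$, which the statement does not require.
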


For a finite subset of a graph product, there is a well-defined smallest parabolic subgroup containing it. This is the content of the following lemma.

\begin{lemma}[{\cite[Proposition~3.10]{AntolinMinasyan2015}}]\label{lem:finite-pc}
Let $F\subseteq G$ be finite. Then there is a unique minimal parabolic
subgroup containing $\langle F\rangle$. Moreover, if this subgroup has the
form $pG_Sp^{-1}$, then
$S\subseteq\supp(F)=\bigcup_{f\in F}\supp(f)$,
and in particular $S$ is finite.
\end{lemma}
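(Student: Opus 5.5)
The plan is to build the minimal parabolic subgroup directly, using the fact that the intersection of two parabolic subgroups is again parabolic. Concretely, I would use the general form of \cite[Proposition~3.4]{AntolinMinasyan2015}, of which Lemma~\ref{lem:vertex-conjugates} used a special case:
\[
pG_Ap^{-1}\cap qG_Bq^{-1}=rG_Cr^{-1}\quad\text{for some } r\in G \text{ and some } C\subseteq A\cap B .
\]
That result is stated for graphs of arbitrary cardinality.

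\emph{Step 1: a finite-type candidate exists.} The standard parabolic $G_{\supp(F)}$ contains $F$, because each $f\in F$ has a reduced word using only vertices of $\supp(F)$. Since $F$ is finite and each element has finite support, $\supp(F)$ is finite. So the family of parabolic subgroups $pG_Sp^{-1}$ containing $\langle F\rangle$ with $S$ finite is non-empty. Choose $P=pG_Sp^{-1}$ in this family with $|S|$ minimal.

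\emph{Step 2: $P$ is contained in every parabolic subgroup containing $F$.} Let $Q=qG_Tq^{-1}$ be any parabolic subgroup (with $T$ arbitrary) containing $\langle F\rangle$. By the intersection result, $P\cap Q=rG_Cr^{-1}$ with $C\subseteq S\cap T$. In particular $C$ is finite, and $P\cap Q$ contains $\langle F\rangle$. Minimality of $|S|$ forces $|C|=|S|$, hence $C=S$. Then $rG_Sr^{-1}\leq pG_Sp^{-1}$, and Fact~\ref{fact:parabolic-basics}(5) gives $rG_Sr^{-1}=pG_Sp^{-1}$. Therefore $P=P\cap Q\leq Q$.

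Thus $P$ is the least parabolic subgroup containing $\langle F\rangle$. Uniqueness of the minimal one follows: any minimal parabolic $Q$ containing $F$ satisfies $P\leq Q$, so $Q=P$.

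\emph{Step 3: the vertex set lies in $\supp(F)$.} Apply Step~2 with $Q=G_{\supp(F)}$. This gives $P\cap G_{\supp(F)}=rG_Cr^{-1}$ with $C\subseteq S\cap\supp(F)$ and $C=S$, so $S\subseteq\supp(F)$ is finite.

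It remains to check that this holds for \emph{every} presentation $P=p'G_{S'}p'^{-1}$ of $P$, not just the chosen one. Lemma~\ref{lem:alignment}, applied to both inclusions $pG_Sp^{-1}\leq p'G_{S'}p'^{-1}$ and $p'G_{S'}p'^{-1}\leq pG_Sp^{-1}$, gives $S\subseteq S'$ and $S'\subseteq S$. Hence $S'=S$, and the vertex set of $P$ is well defined.

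The only delicate point is the intersection result itself: that two arbitrary parabolic subgroups meet in a parabolic subgroup whose type lies in $A\cap B$, over a graph of arbitrary cardinality. I would cite it from \cite{AntolinMinasyan2015}, where it is proved in that generality. The rest is the counting argument on $|S|$, which is why the candidate in Step~1 must have finite type.
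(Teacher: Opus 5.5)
Your argument is correct. The paper gives no proof of this lemma; it simply imports it from \cite[Proposition~3.10]{AntolinMinasyan2015}. You instead derive it from three ingredients: the two-subgroup intersection formula \cite[Proposition~3.4]{AntolinMinasyan2015}, which the paper already uses in Lemma~\ref{lem:vertex-conjugates}; Fact~\ref{fact:parabolic-basics}(5); and a minimal-cardinality argument. Among the finite-type parabolic subgroups containing $F$ (a non-empty family, because of $G_{\supp(F)}$), one whose type has least cardinality cannot be cut down by intersecting it with any other parabolic subgroup containing $F$. It therefore lies in all of them.

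The citation buys brevity. Your route makes explicit what matters over an infinite graph:
\begin{enumerate}[(i)]
\item the only finiteness needed is the existence of \emph{one} finite-type parabolic subgroup containing $F$, which is why the statement is restricted to finite $F$;
\item the bound $S\subseteq\supp(F)$ is derived rather than delegated to the reference;
\item $\Pc(F)$ is the \emph{least} parabolic subgroup containing $F$, not merely the unique minimal one.
\end{enumerate}
The property in (iii) is what the paper actually uses later. In Lemma~\ref{lem:monotonicity} and in Propositions~\ref{prop:parabolic-fullness-finite-supports} and~\ref{prop:vertex_full_are_pf}, ``minimality'' of $\Pc(F)$ is invoked to obtain $\Pc(F)\leq qG_Tq^{-1}$ for an arbitrary parabolic subgroup $qG_Tq^{-1}$ containing $F$.
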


\begin{definition}
Let $F\subseteq G$ be finite. The minimal parabolic subgroup from
Lemma~\ref{lem:finite-pc} is called the \emph{parabolic closure} of $F$
and is denoted by $\Pc(F)$.  If $\Pc(F)=pG_Sp^{-1}$,
then the set $S$ is called the \emph{essential support} of $F$ and is
denoted by $\esupp(F)=S$.
\end{definition}

\begin{lemma}\label{lem:monotonicity}
If $F\subseteq F'$ are finite subsets of $G$, then
$\Pc(F)\leq\Pc(F')$ and $\esupp(F)\subseteq\esupp(F')$.
\end{lemma}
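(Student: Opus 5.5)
The plan is to read off both claims directly from the definitions, using Lemma~\ref{lem:finite-pc} and Lemma~\ref{lem:alignment}.

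\emph{Subgroup inclusion.} Since $F\subseteq F'$, we have $\langle F\rangle\leq\langle F'\rangle\leq\Pc(F')$. Thus $\Pc(F')$ is a parabolic subgroup containing $\langle F\rangle$. To conclude $\Pc(F)\leq\Pc(F')$, I need $\Pc(F)$ to be the \emph{least} parabolic subgroup containing $\langle F\rangle$, not merely a minimal one.

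This is the one point needing care, since Lemma~\ref{lem:finite-pc} is phrased as ``unique minimal''. I would use the strong form of \cite[Proposition~3.10]{AntolinMinasyan2015}: the parabolic closure is the intersection of all parabolic subgroups containing the set. That intersection is itself parabolic, by \cite[Proposition~3.4]{AntolinMinasyan2015} together with the descending-chain property for parabolics with finite support.

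If I only had the weaker ``unique minimal'' reading, I would argue as follows. The intersection $\Pc(F)\cap\Pc(F')$ is again a parabolic subgroup by \cite[Proposition~3.4]{AntolinMinasyan2015}, and it contains $\langle F\rangle$. It is also contained in $\Pc(F)$, so minimality forces it to equal $\Pc(F)$. Hence $\Pc(F)\leq\Pc(F')$.

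\emph{Support inclusion.} Write $\Pc(F)=pG_Sp^{-1}$ and $\Pc(F')=qG_Tq^{-1}$, so that $S=\esupp(F)$ and $T=\esupp(F')$. The inclusion $pG_Sp^{-1}\leq qG_Tq^{-1}$ just proved lets us apply Lemma~\ref{lem:alignment}, which gives $S\subseteq T$ directly.

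I would also note, via Lemma~\ref{lem:alignment} (or \cite[Lemma~3.17]{GenevoisMartin2019}), that $\esupp$ is well defined. If $pG_Sp^{-1}=p'G_{S'}p'^{-1}$, then mutual inclusion yields $S\subseteq S'$ and $S'\subseteq S$. So the statement does not depend on the chosen representation.
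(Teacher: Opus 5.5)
Your proposal is correct and follows the same route as the paper. The inclusion $\Pc(F)\leq\Pc(F')$ comes from minimality of $\Pc(F)$, since $\Pc(F')$ contains $F'\supseteq F$, and $\esupp(F)\subseteq\esupp(F')$ then follows from Lemma~\ref{lem:alignment}. Your intersection argument via \cite[Proposition~3.4]{AntolinMinasyan2015} is a useful addition: it makes explicit that ``minimal'' here means ``least'', which the paper's one-line appeal to minimality takes for granted.
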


\begin{proof}
The first assertion is minimality of $\Pc(F)$: any parabolic subgroup containing $F'$ also contains $F$.  The second follows from
Lemma~\ref{lem:alignment}.
\end{proof}

\begin{lemma}\label{lem:compatibility}
Let $U\subseteq V(\Gamma)$ and $X\subseteq G_U$.  If the parabolic closure
of $X$ exists, its computation inside the graph product $G_U$ agrees with
its computation in $G$.
\end{lemma}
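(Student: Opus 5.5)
The idea is to compare parabolic subgroups of $G_U$ with parabolic subgroups of $G$ that contain $X$, using the retraction $\rho_U$ of Notation~\ref{retraction} together with Lemma~\ref{lem:alignment}. Assume $X$ is finite, so that both parabolic closures exist by Lemma~\ref{lem:finite-pc}, once applied in the graph product $G_U=\Gamma_U\cG|_U$ and once in $G$. Write $\Pc_U(X)=pG_Sp^{-1}$ with $p\in G_U$ and $S\subseteq U$, and write $\Pc(X)=qG_Tq^{-1}$ with $q\in G$. The goal is to show $\Pc(X)=\Pc_U(X)$, and hence that the essential supports coincide.

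\textbf{Step 1: $\Pc(X)\le \Pc_U(X)$.} A parabolic subgroup of $G_U$, say $pG_Sp^{-1}$, is also a parabolic subgroup of $G$, and it contains $X$. Minimality of $\Pc(X)$ in $G$ therefore gives $qG_Tq^{-1}\le pG_Sp^{-1}$.

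\textbf{Step 2: $\Pc(X)\le G_U$, and it is parabolic there.} From Step 1, $qG_Tq^{-1}\le pG_Sp^{-1}\le G_U$. By Lemma~\ref{lem:alignment}, this gives $T\subseteq S\subseteq U$, and we may replace $q$ by $p\ell$ for some $\ell\in G_{\link(T)}$. It remains to check that $\ell$ can be taken in $G_U$.

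\begin{itemize}
\item \emph{Alignment approach.} Applying Lemma~\ref{lem:alignment} to the inclusion $qG_Tq^{-1}\le pG_Sp^{-1}$ yields $q' \in G$ with $q'G_Sq'^{-1}=pG_Sp^{-1}$. This does not directly place the conjugator in $G_U$.
\item \emph{Retraction approach (cleaner).} Apply $\rho_U$. Since $qG_Tq^{-1}\le G_U$ and $\rho_U$ is the identity on $G_U$,
\[
qG_Tq^{-1}=\rho_U(qG_Tq^{-1})=\rho_U(q)\,G_T\,\rho_U(q)^{-1},
\]
because $T\subseteq U$ implies $\rho_U(G_T)=G_T$. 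Hence $qG_Tq^{-1}$ is a parabolic subgroup of $G_U$ containing $X$.
\end{itemize}

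\textbf{Step 3: conclude.} Minimality of $\Pc_U(X)$ in $G_U$ now gives $\Pc_U(X)\le \Pc(X)$. Combined with Step 1, the two closures are equal. Lemma~\ref{lem:alignment}, applied in both directions, gives $S\subseteq T\subseteq S$, so the essential supports agree as well.

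The main obstacle is Step 2: showing that the $G$-closure, already known to lie inside $G_U$, is a parabolic subgroup \emph{of $G_U$}, that is, that its conjugator can be chosen in $G_U$. The retraction identity $\rho_U(qG_Tq^{-1})=\rho_U(q)G_T\rho_U(q)^{-1}$ handles this, and it relies on having first established $T\subseteq U$ via Lemma~\ref{lem:alignment}.

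If $X$ is allowed to be infinite, with the hypothesis "if the parabolic closure exists" meaning a unique minimal parabolic containing $X$ exists in the relevant group, the same two minimality arguments go through verbatim. Steps 1–3 used only that $\Pc(X)$ and $\Pc_U(X)$ are minimal parabolic subgroups containing $X$, never that $X$ is finite.
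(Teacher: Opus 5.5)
Your argument is correct and is essentially the proof the paper has in mind: the paper defers to Minasyan--Osin's Lemma~6.6, whose proof uses exactly the retraction $\rho_U$ (to move the conjugator of $\Pc(X)$ into $G_U$) together with the containment part of Lemma~\ref{lem:alignment}, as your Steps~1--3 do. Two small points: the phrase ``replace $q$ by $p\ell$'' misstates which conjugator Lemma~\ref{lem:alignment} adjusts, which is harmless because you never use it; and for infinite $X$ you only need $\Pc(X)$ to exist in $G$, since $G_U$ is itself a parabolic containing $X$, so $\Pc(X)\le G_U$, and your retraction step then shows $\Pc(X)$ is also the closure in $G_U$.
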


\begin{proof}
This is \cite[Lemma~6.6]{MinasyanOsin2015}.  Its proof uses only the canonical retraction
$\rho_U:G\to G_U$ and the containment part of
Lemma~\ref{lem:alignment}, so the same proof works for an arbitrary defining
graph whenever the parabolic closure under discussion exists.
\end{proof}

\subsection{Partial conjugations}

\begin{definition}\label{def:partial-conjugation}
Let $u\in V(\Gamma)$, let $s\in G_u$, and let $C$ be a union of connected
components of $\Gamma\setminus N^*(u)$.  The \emph{partial conjugation}
$\pi(u,s,C)$ is defined on vertex groups by
\[
 \pi(u,s,C)(g)=
 \begin{cases}
   sgs^{-1},&g\in G_v\text{ and }v\in C,\\
   g,&g\in G_v\text{ and }v\notin C.
 \end{cases}
\]
The defining relations show that this extends to an automorphism.  We write
$\PartConj(G)$ for the subgroup generated by all partial conjugations. The partial conjugation is \emph{entire} if
$C=V(\Gamma)\setminus N^*(u)$, and \emph{non-entire} if
$\varnothing\subsetneq C\subsetneq V(\Gamma)\setminus N^*(u)$.
\end{definition}

An entire partial conjugation can be described as a product of a factorwise inner automorphism and an inner automorphism. Indeed, let $\alpha_{u,s^{-1}}\in\FInn(G)$ be the factorwise inner
automorphism which restricts to conjugation by $s^{-1}$ on $G_u$ and to the
identity on every $G_v$ with $v\neq u$.  Then
$\pi\bigl(u,s,V(\Gamma)\setminus N^*(u)\bigr)
=\ad(s)\circ\alpha_{u,s^{-1}}$.
The two automorphisms cancel on $G_u$; global conjugation by $s$ fixes the
neighboring vertex groups because they commute with $G_u$; and it conjugates
all vertex groups outside $N^*(u)$ by $s$.

\subsection{Pointwise-convergence topology}

For a discrete group $K$, give $K^K$ the product topology and $\Aut(K)$ the
subspace topology.  This is the \emph{topology of pointwise convergence}.  A
neighborhood basis at the identity consists of pointwise stabilizers of
finite subsets of $K$.  The group operations, including inversion, are
continuous.  When $K$ is countable, $\Aut(K)$ is a non-Archimedean Polish
group.

\begin{lemma}\label{lem:inn-discrete-centralizer}
For every group $K$, the following are equivalent.
\begin{enumerate}[(1)]
\item $\Inn(K)$ is discrete in $\Aut(K)$.
\item There is a finite set $F\subseteq K$ such that $C_K(F)=Z(K)$.
\end{enumerate}
If $K$ is countable, every countable closed subgroup of $\Aut(K)$ is
discrete.
\end{lemma}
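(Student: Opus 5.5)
The plan is to argue both directions through the neighborhood basis at the identity. The map $\ad\colon K\to\Inn(K)$, $g\mapsto \ad(g)$, is a surjective homomorphism with kernel $Z(K)$. For a finite set $F\subseteq K$, write $U_F=\{\varphi\in\Aut(K):\varphi(f)=f\ \text{for all } f\in F\}$ for its pointwise stabilizer. The subgroups $U_F$ form a neighborhood basis of the identity, and
\[
 \ad(g)\in U_F \iff gfg^{-1}=f \ \text{for all } f\in F \iff g\in C_K(F).
\]
Hence $U_F\cap\Inn(K)=\ad\bigl(C_K(F)\bigr)$.

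For (2)$\Rightarrow$(1), take $F$ finite with $C_K(F)=Z(K)$. Then $U_F\cap\Inn(K)=\ad(Z(K))=\{\mathrm{id}\}$, so the identity is isolated in $\Inn(K)$. Since $\Inn(K)$ is a topological group, translation is a homeomorphism, so every point is isolated and $\Inn(K)$ is discrete.

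For (1)$\Rightarrow$(2), discreteness gives an open set around the identity meeting $\Inn(K)$ only in $\{\mathrm{id}\}$, and such a set contains some basic neighborhood $U_F$ with $F$ finite. Then $\ad(C_K(F))$ is trivial, i.e.\ $C_K(F)\subseteq Z(K)$. The reverse inclusion $Z(K)\subseteq C_K(F)$ always holds, so $C_K(F)=Z(K)$.

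For the last assertion, let $K$ be countable and $H\leq\Aut(K)$ a countable closed subgroup. $\Aut(K)$ is Polish: it is a $G_\delta$ in the Polish space $K^K$, because closure under composition and bijectivity are countable conditions. So $H$ is a countable Polish space. By the Baire category theorem, $H=\bigcup_{h\in H}\{h\}$ cannot be a countable union of nowhere dense closed sets. Therefore some singleton $\{h\}$ has nonempty interior in $H$, i.e.\ $h$ is isolated. Translating by $h^{-1}$ shows the identity is isolated, and hence $H$ is discrete.

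This step is the only nontrivial one. Its main point is to check that $\Aut(K)$ is Polish; the paper already asserts this in the preceding paragraph, so it can simply be cited.
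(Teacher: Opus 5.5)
Your proposal is correct and follows the same route as the paper. You identify $U_F\cap\Inn(K)$ with $\ad(C_K(F))$, which gives both directions at once, and you deduce the last assertion from the Baire category theorem applied to a countable closed subgroup of the Polish group $\Aut(K)$; your version just spells out details the paper leaves implicit. One small wording fix: the reason $\Aut(K)$ is a $G_\delta$ in $K^K$ is that being a homomorphism and being injective are closed conditions, while surjectivity is a $G_\delta$ condition (not ``closure under composition'').
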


\begin{proof}
The inner automorphism $\ad(k)$ fixes $F$ pointwise exactly when
$k\in C_K(F)$, and it is the identity exactly when $k\in Z(K)$.  Thus the
pointwise stabilizer of $F$ meets $\Inn(K)$ trivially precisely under (2).
For the last assertion, a countable closed subgroup of the Polish group
$\Aut(K)$ is itself a countable Polish group and hence is discrete by the
Baire category theorem.
\end{proof}

\subsection{Acylindrical hyperbolicity}

Let a group $K$ act by isometries on a metric space $X$.  The action is
\emph{acylindrical} if, for every $\varepsilon\geq0$, there exist
$R,N\geq0$ such that whenever $d(x,y)\geq R$, at most $N$ elements $k\in K$
satisfy $d(x,kx)\leq\varepsilon$ and $d(y,ky)\leq\varepsilon$.
An element $g$ is \emph{loxodromic} if an orbit map
$\Z\to X$, $n\mapsto g^nx$, is a quasi-isometric embedding.  A loxodromic
element $g$ is \emph{WPD} if, for every $\varepsilon\geq0$ and $x\in X$,
there is $M\geq1$ such that
\[
 \{k\in K:d(x,kx)\leq\varepsilon,
             \ d(g^Mx,kg^Mx)\leq\varepsilon\}
\]
is finite.  A group is \emph{acylindrically hyperbolic} if it admits a
non-elementary acylindrical action on a hyperbolic space.  We use the
equivalent WPD characterization from \cite[Theorem~1.2]{Osin2016}: a group
containing a loxodromic WPD element is either virtually cyclic or
acylindrically hyperbolic. 

For an action of a group $K$ on a space $X$, the pointwise
stabilizer of a set $Y \subseteq X$ is denoted by $\PStab_K(Y)$.

\begin{lemma}\label{lem:ah-finite-centralizer}
If $K$ is virtually cyclic or acylindrically hyperbolic, then there is a finite set $F\subseteq K$
such that $C_K(F)=Z(K)$.
Consequently, $\Inn(K)$ is discrete in $\Aut(K)$.
\end{lemma}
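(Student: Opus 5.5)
The plan has two cases, with the acylindrically hyperbolic case doing most of the work. The "consequently" clause then follows directly from Lemma~\ref{lem:inn-discrete-centralizer}, (2)$\Rightarrow$(1), so everything reduces to producing a finite $F\subseteq K$ with $C_K(F)=Z(K)$.

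\emph{Virtually cyclic case.} If $K$ is finite, take $F=K$. Otherwise $K$ has a finite-index infinite cyclic subgroup $\langle g\rangle$. Choose coset representatives $t_1,\dots,t_m$ of $\langle g\rangle$ in $K$ and set $F=\{g,t_1,\dots,t_m\}$. Any $k\in C_K(F)$ commutes with $g$ and with every $t_i$, and the set $\{g,t_1,\dots,t_m\}$ generates $K$, because every element of $K$ has the form $t_ig^n$. Hence $k\in Z(K)$. The reverse inclusion $Z(K)\subseteq C_K(F)$ is trivial.

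\emph{Acylindrically hyperbolic case.} Here I would use the standard structure theory from Dahmani--Guirardel--Osin and Osin. $K$ has a maximal finite normal subgroup $E(K)$. It also contains two loxodromic WPD elements $g,h$, for an acylindrical action on a hyperbolic space, that are independent, with maximal elementary subgroups $E(g)=\{k: k\langle g\rangle k^{-1}\ \text{is commensurable with}\ \langle g\rangle\}$ and $E(h)$ satisfying $E(g)\cap E(h)=E(K)$. Any $k$ centralizing both $g$ and $h$ lies in $E(g)\cap E(h)=E(K)$, which is finite. Set $F=\{g,h\}\cup E(K)\cup\{y_1,\dots,y_r\}$, where the $y_j$ are chosen as follows. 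For each $e\in E(K)\setminus Z(K)$, pick some $y_e\in K$ not commuting with $e$; this gives finitely many extra elements. Then $C_K(F)\subseteq E(K)$, and any $k\in C_K(F)$ commutes with every $y_e$. If such a $k$ were not central, it would be one of the finitely many $e\in E(K)\setminus Z(K)$, and then $y_e\in F$ fails to commute with $k$, a contradiction. Hence $C_K(F)\subseteq Z(K)$, and equality follows.

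The main obstacle is justifying $C_K(g)\cap C_K(h)$ finite. The cleanest route is to cite that centralizers of loxodromic WPD elements are contained in the virtually cyclic group $E(g)$ (\cite[Lemma~6.5]{DahmaniGuirardelOsin}), together with the existence of independent loxodromics $g,h$ such that $E(g)\cap E(h)$ is finite. To see that this intersection is finite, note that $E(g)\cap E(h)$ is a subgroup of the virtually cyclic group $E(g)$. If it were infinite, it would contain a power $g^n$ with $n\neq0$, and it would also have finite index in $E(h)$, making $\langle g^n\rangle$ and $\langle h\rangle$ commensurable and contradicting independence.
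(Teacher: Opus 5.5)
Your proof is correct and follows the paper's argument: finite generation in the virtually cyclic case, and in the acylindrically hyperbolic case two independent loxodromics $g,h$ with finite common centralizer, followed by finitely many witnesses of non-centrality. The paper bounds $C_K(g)\cap C_K(h)$ using virtual cyclicity of the centralizers and independence, just as you do via $E(g)\cap E(h)$.

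The only loose point is the cited equality $E(g)\cap E(h)=E(K)$, which your final paragraph does not establish. You do not need it: choose a witness $y_e$ for each non-central $e$ in the finite group $E(g)\cap E(h)$ and drop $E(K)$ from $F$. The argument then goes through using only the finiteness you proved.
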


\begin{proof}
If $K$ is virtually cyclic, then it is finitely generated. Therefore, we may take $F$ to be a generating set of $K$.

Suppose instead $K$ is acylindrically hyperbolic. Choose two independent loxodromic elements $g,h$ for a non-elementary acylindrical action of $K$ on a hyperbolic space.  By \cite[Corollary~6.9]{Osin2016}, the centralizers $C_K(g)$ and $C_K(h)$ are virtually cyclic.  Their intersection $C=C_K(g)\cap C_K(h)$ is finite: otherwise it would contain an infinite-order loxodromic element with the same fixed points at infinity as both $g$ and $h$, contradicting independence.

For every $c\in C\setminus Z(K)$, choose $x_c\in K$ with
$cx_c\neq x_cc$, and put $F=\{g,h\}\cup\{x_c:c\in C\setminus Z(K)\}$.
An element centralizing $F$ lies in $C$ and cannot belong to $C\setminus Z(K)$ by the choice of $x_c$.  Thus $C_K(F)=Z(K)$.

The last assertion follows from Lemma~\ref{lem:inn-discrete-centralizer}.
\end{proof}

\section{Acylindrical hyperbolicity of graph products of groups}\label{sec:ah}

\begin{definition}
    We say a subgroup $H \leq G$ is \textit{parabolically full} if it is not contained in any proper parabolic subgroup.
\end{definition}

The first goal of this section is to prove Theorem~\ref{thm:pf_ah}, which states that parabolically full subgroups are either virtually cyclic or acylindrically hyperbolic. We first re-frame parabolic fullness in terms of essential supports of finite subsets.

\begin{proposition}
\label{prop:parabolic-fullness-finite-supports}
Let $G=\Gamma\mathcal G$ be a graph product of non-trivial groups, where
$V(\Gamma)\neq\varnothing$, and let $H\leq G$. Define
$$E_H\coloneqq\bigcup_{\substack{F\subseteq H\\ |F|<\infty}}\esupp(F).$$
Then $H$ is parabolically full if and only if $E_H = V(\Gamma)$.
\end{proposition}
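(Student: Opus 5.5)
The direction ``$E_H=V(\Gamma)$ implies $H$ parabolically full'' is a direct consequence of Lemma~\ref{lem:alignment}. Suppose $H\leq pG_Sp^{-1}$ with $S\subsetneq V(\Gamma)$, and let $F\subseteq H$ be finite. By minimality of the parabolic closure, $\Pc(F)\leq pG_Sp^{-1}$. Writing $\Pc(F)=p_FG_{\esupp(F)}p_F^{-1}$, Lemma~\ref{lem:alignment} gives $\esupp(F)\subseteq S$. Hence $E_H\subseteq S\neq V(\Gamma)$, which is the contrapositive.

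For the converse, assume $E_H\neq V(\Gamma)$, fix $v\notin E_H$, and put $T=V(\Gamma)\setminus\{v\}$. If $H=1$ there is nothing to prove, so fix $1\neq h_0\in H$ and a finite $F_0\ni h_0$. The aim is to align all the parabolic closures $\Pc(F)$ with $F\supseteq F_0$ by a single conjugator. By Lemma~\ref{lem:monotonicity}, $\Pc(F_0)\leq\Pc(F)$ for such $F$. Lemma~\ref{lem:alignment} then gives $\Pc(F)=p_0\ell_FG_{S_F}\ell_F^{-1}p_0^{-1}$, where $p_0$ is a fixed conjugator for $\Pc(F_0)$, $S_F=\esupp(F)$, and $\ell_F\in G_{\link(S_{F_0})}$. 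After replacing $H$ by $p_0^{-1}Hp_0$ (this changes neither $E_H$ nor parabolic fullness), we may assume $p_0=1$. Every $h\in H$ lies in $\Pc(F_0\cup\{h\})$, so it suffices to find one $q$ with $\ell_FG_{S_F}\ell_F^{-1}\leq qG_Tq^{-1}$ for all $F\supseteq F_0$.

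\emph{Good case.} Suppose $F_0$ can be chosen so that $v\notin\link(S_{F_0})$, that is, some vertex of $S_{F_0}$ other than $v$ is not adjacent to $v$. Since also $v\notin S_{F_0}$, we get $\link(S_{F_0})\subseteq T$, so $\ell_F\in G_T$ and $\Pc(F)\leq G_T$ for every $F$. Thus $H\leq G_T$, a proper parabolic subgroup.

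\emph{Remaining case (expected main obstacle).} Otherwise $v$ is adjacent to every vertex of $E_H$, so $G_v$ commutes with each $G_{S_F}$. The $v$-syllables of $\ell_F$ are not central, however, so they cannot simply be discarded. My first attempt is to move them past $G_{S_F}$ as follows. Write $L=\link(S_{F_0})$ and normalize each $\ell_F$ to a shortest representative of its coset $\ell_F\,C_G(G_{S_F})$; the choice of coset representative does not change $\ell_FG_{S_F}\ell_F^{-1}$. Since $G_{\{v\}\cup\link(S_F)}$ centralizes $G_{S_F}$ up to the vertex-group centers (Fact~\ref{fact:parabolic-basics}(3)), a shortest representative should have $\last(\ell_F)\cap(\{v\}\cup\link(S_F))=\varnothing$. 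Moreover, as $F$ grows, these representatives should stabilize on each finite piece (again by Lemma~\ref{lem:alignment}, now applied between $F$ and $F'\supseteq F$). Concretely, I will try to show that the $v$-syllables of the normalized $\ell_F$ can be absorbed, so that $\ell_F\in G_T\,C_G(G_{S_F})$ and therefore $\Pc(F)\leq G_T$. If this bookkeeping fails, my fallback is to replace $T$ by $V(\Gamma)\setminus\{w\}$ for a vertex $w\notin E_H$ chosen not adjacent to some vertex of $E_H$, which puts us back in the good case. When no such $w$ exists, every vertex outside $E_H$ is adjacent to all of $E_H$. Then $G=G_{E_H}\times G_{V(\Gamma)\setminus E_H}$, and projecting $H$ onto the second factor should show directly that $H$ lies in a conjugate of $G_{E_H}$, which is proper because $E_H\neq V(\Gamma)$.
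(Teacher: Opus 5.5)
Your forward direction is the paper's, and your fallback is exactly the paper's converse. The paper splits on whether some $z\notin E_H$ is non-adjacent to some $s\in E_H$. If so, this is your good case with $z$ in place of $v$: choose $F_0$ with $s\in\esupp(F_0)$, and Lemma~\ref{lem:alignment} gives $H\leq pG_{V(\Gamma)\setminus\{z\}}p^{-1}$. Otherwise it uses $G=G_{E_H}\times G_{V(\Gamma)\setminus E_H}$.

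So the proof is correct once you organize it around this dichotomy rather than around a vertex $v$ fixed in advance. However, you should delete the ``first attempt'', because its target is false. Here is a counterexample.
\begin{itemize}
\item Let $\Gamma$ have vertices $v,a_1,a_2,\dots,w_1,w_2,\dots$. Make $v$ adjacent to every $a_i$, make $w_j$ adjacent to $a_i$ exactly when $i\leq j$, and add no other edges.
\item Pick $1\neq x\in G_v$ and $1\neq y_j\in G_{w_j}$, and put $\ell_n=xy_1xy_2\cdots xy_{n-1}$.
\item Let $H=\bigcup_n\ell_nG_{\{a_1,\dots,a_n\}}\ell_n^{-1}$. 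This is an increasing union because $xy_n$ centralizes $G_{\{a_1,\dots,a_n\}}$.
\end{itemize}
Then $E_H=\{a_1,a_2,\dots\}$, and $v$ is adjacent to all of it. Yet $H$ lies in no conjugate $qG_{V(\Gamma)\setminus\{v\}}q^{-1}$. As in the proof of Lemma~\ref{lem:alignment}, such a containment would give $q\in\ell_nk_nG_{V(\Gamma)\setminus\{v\}}$ with $k_n\in G_{\link(\{a_n\})}$ for every $n$. But every element of that coset has at least $n-1$ syllables from $G_v$, which is absurd for $n$ large. To see this, retract onto the free product $G_{\{v,w_1,w_2,\dots\}}$, where the copies of $x$ in $\ell_n$ are separated by the $y_j$.

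So no normalization of the $\ell_F$ can absorb the $v$-syllables. One must delete a different vertex; here $w_1$ works, with $H\leq xy_1\,G_{V(\Gamma)\setminus\{w_1\}}\,(xy_1)^{-1}$. That is exactly what your fallback does.

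In the direct-product case, your projection claim is right but needs its one-line reason. Each $h\in H$ lies in $\Pc(\{h\})$, which is a conjugate of $G_{\esupp(\{h\})}\leq G_{E_H}$. Since $G_{E_H}$ is normal as a direct factor, $h\in G_{E_H}$. Hence $H\leq G_{E_H}$, a proper standard parabolic subgroup. This is slightly more direct than the paper. The paper reruns the alignment argument and splits the conjugator $\ell$ into its $G_{E_H}$- and $G_{V(\Gamma)\setminus E_H}$-components to reach $H\leq pG_{E_H}p^{-1}$, a subgroup that equals $G_{E_H}$ anyway.
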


\begin{proof}
Suppose first that $H$ is contained in a proper parabolic subgroup
$qG_Tq^{-1}$. For every finite subset $F\subseteq H$, minimality of
$\Pc(F)$ gives $\Pc(F)\leq qG_Tq^{-1}$.  The parabolic containment lemma
therefore implies $S_F=\esupp(F)\subseteq T$.
Hence $E_H\subseteq T$. Since $qG_Tq^{-1}$ is proper, we have
$T\subsetneq V(\Gamma)$, and therefore $E_H\neq V(\Gamma)$. Therefore, $E_H = V(\Gamma)$ implies $H$ is parabolically full.

Conversely, suppose that $E\coloneqq E_H \subsetneq V(\Gamma)$.
We show that $H$ is contained in a proper parabolic subgroup.

If $E=\varnothing$, then $\esupp(\{h\})=\varnothing$ for every $h\in H$.
Thus every element of $H$ is trivial, so $H=\{1\}$ and the conclusion is
immediate. We may therefore assume that $E_H\neq\varnothing$.

First suppose that there are non-adjacent vertices $s\in E$ and
$z\in V(\Gamma)\setminus E$. Choose a finite subset $F_0\subseteq H$ such
that $s\in S_0\coloneqq\esupp(F_0)$, and choose $p\in G$ such that
$\Pc(F_0)=pG_{S_0}p^{-1}$.
For an arbitrary $h\in H$, set $F_1=F_0\cup\{h\}$ and
$S_1=\esupp(F_1)$. Lemma \ref{lem:monotonicity} gives
$\Pc(F_0)\leq\Pc(F_1)$,
so Lemma  \ref{lem:alignment} gives an element
$\ell\in G_{\link_\Gamma(S_0)}$ such that
$\Pc(F_1)=p\ell G_{S_1}\ell^{-1}p^{-1}$.
Because $S_1\subseteq E$, we have $z\notin S_1$; hence $G_{S_1} \leq G_{V(\Gamma)\setminus \{z\}}$. Moreover,
$z\notin\link_\Gamma(S_0)$, since $s\in S_0$ and $z$ is not adjacent to
$s$; hence $\ell \in G_{V(\Gamma)\setminus \{z\}}$.  Consequently,
$h\in\Pc(F_1)\leq pG_{V(\Gamma)\setminus\{z\}}p^{-1}$.
Since $h\in H$ was arbitrary, and $p$ did not depend on the choice of $h$,
$H\leq pG_{V(\Gamma)\setminus\{z\}}p^{-1}$,
which is a proper parabolic subgroup.

It remains to consider the case in which every vertex of $E$ is adjacent
to every vertex of $V(\Gamma)\setminus E$. Then $G$ decomposes as the direct product
\[
    G=G_E\times G_{V(\Gamma)\setminus E}.
\]
Choose a finite subset $F_0\subseteq H$ with
$S_0\coloneqq\esupp(F_0)\neq\varnothing$, and choose $p\in G$ such that
$\Pc(F_0)=pG_{S_0}p^{-1}$.
For an arbitrary $h\in H$, the alignment lemma again gives
$\Pc(F_0\cup\{h\})=p\ell G_{S_1}\ell^{-1}p^{-1}$,
where $S_1\subseteq E$ and $\ell\in G_{\link_\Gamma(S_0)}$.

Using the direct-product decomposition of $G$, write
$\ell=\ell_E\ell_{E^c}$, where $\ell_E\in G_E$ and
$\ell_{E^c}\in G_{V(\Gamma)\setminus E}$.
Since $G_{V(\Gamma)\setminus E}$ commutes with $G_E$ and $S_1 \subset E$, the element
$\ell_{E^c}$ centralizes $G_{S_1}$. Therefore
$\ell G_{S_1}\ell^{-1}=\ell_EG_{S_1}\ell_E^{-1}\leq G_E$.
It follows that $h\in p\ell G_{S_1}\ell^{-1}p^{-1}\leq pG_Ep^{-1}$.
Since $h$ was arbitrary, $H\leq pG_Ep^{-1}$.
This parabolic subgroup is proper because $E\subsetneq V(\Gamma)$. Thus
$E_H\neq V(\Gamma)$ implies that $H$ is not parabolically full, completing
the proof of the equivalence.
\end{proof}

\begin{corollary}\label{cor:capturing}
    If $H$ is parabolically full, then for every finite subset
$D\subseteq V(\Gamma)$ there is a finite subset $F\subseteq H$ such that
$D\subseteq\esupp(F)$.
\end{corollary}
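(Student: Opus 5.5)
Since $H$ is parabolically full, Proposition~\ref{prop:parabolic-fullness-finite-supports} gives $E_H=V(\Gamma)$. So for each vertex $d\in D$ we can choose a finite subset $F_d\subseteq H$ with $d\in\esupp(F_d)$. Because $D$ is finite, we may take the finite union
\[
 F=\bigcup_{d\in D}F_d\subseteq H.
\]

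For each $d$ we have $F_d\subseteq F$, and both are finite subsets of $G$. Lemma~\ref{lem:monotonicity} then yields $\esupp(F_d)\subseteq\esupp(F)$, hence $d\in\esupp(F)$. As $d\in D$ was arbitrary, $D\subseteq\esupp(F)$.

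The degenerate case $D=\varnothing$ is handled by $F=\varnothing$, and also by any finite $F$. Similarly, the definition of $E_H$ through finite subsets makes the case $V(\Gamma)=\varnothing$ vacuous, since then $D=\varnothing$. Every step is a direct invocation of an earlier result, so no real obstacle arises. The one point to check is that monotonicity of essential supports, which rests on Lemma~\ref{lem:alignment}, applies to arbitrary finite sets, including the finite union $F$; it does.
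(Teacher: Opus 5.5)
Your proposal is correct and follows essentially the same argument as the paper. Like the paper, you apply Proposition~\ref{prop:parabolic-fullness-finite-supports} to choose $F_d$ for each $d\in D$, take the finite union, and conclude by monotonicity of essential supports.
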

\begin{proof}
    Suppose that $H$ is parabolically full and let
$D\subseteq V(\Gamma)$ be finite. Since $E_H=V(\Gamma)$, for each $d\in D$
there is a finite subset $F_d\subseteq H$ such that $d\in\esupp(F_d)$.
Set $F=\bigcup_{d\in D}F_d$.
Then $F$ is finite, and monotonicity of finite parabolic closures gives
$\esupp(F_d)\subseteq\esupp(F)$ for every $d\in D$. Hence
$D\subseteq\esupp(F)$.
\end{proof}

We next isolate the special case of the parabolic-intersection formula used
throughout this section.

\begin{fact}\label{fact:parabolic-intersection}
Let $S\subseteq V(\Gamma)$ and $g\in G$.  If
$\first(g)\cap S=\last(g)\cap S=\varnothing$, then
$$gG_Sg^{-1}\cap G_S=G_{S\cap\bigcap_{v\in\supp(g)}N(v)}.$$
\end{fact}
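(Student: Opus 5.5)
The plan is to prove the two inclusions separately. The reverse inclusion is elementary. For the forward inclusion I would compare two reduced words for the same element and use the normal-form theorem; citing the parabolic-intersection formula of \cite[Proposition~3.4]{AntolinMinasyan2015} is the fallback.

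Put $L=S\cap\bigcap_{v\in\supp(g)}N(v)$. A vertex of $L$ is adjacent to every vertex of $\supp(g)$, and the graph is simplicial, so $L\cap\supp(g)=\varnothing$. Moreover $G_L$ commutes elementwise with $G_{\supp(g)}\ni g$. Hence every $x\in G_L$ satisfies $x=gxg^{-1}\in gG_Sg^{-1}\cap G_S$, which gives $G_L\leq gG_Sg^{-1}\cap G_S$.

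For the forward inclusion, take $x\in G_S$ with $x=gyg^{-1}$ for some $y\in G_S$, so that $gy=xg$.

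\textbf{Step 1: both concatenations are reduced.} Fix a reduced word $w$ for $g$ and reduced words for $x,y$ inside $G_S$. I claim that $w\cdot(\text{word for }y)$ is reduced. Any shortening would have to merge a syllable of $y$ with a syllable of $g$. Such a syllable of $g$ would have to be shuffled to the right end of $w$, so its vertex would lie in $\last(g)$, and it would also lie in $S$. This contradicts $\last(g)\cap S=\varnothing$; the argument is the same as in Lemma~\ref{lem:reduced-conjugate}. Symmetrically, using $\first(g)\cap S=\varnothing$, the word $(\text{word for }x)\cdot w$ is reduced.

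\textbf{Step 2: compare the two reduced words.} By the normal-form theorem, these two reduced words differ by a sequence of interchanges of adjacent commuting syllables. Tracking positions through the shuffles gives a bijection between syllable occurrences that preserves vertex labels and preserves the relative order of any two occurrences whose vertices are equal or non-adjacent.

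I want to conclude that the syllables of $y$, which sit after all of $w$ in $gy$, correspond to the syllables of $x$, which sit before all of $w$ in $xg$. Granting this, each $y$-syllable has moved past every $g$-syllable. So its vertex is adjacent to, and in particular distinct from, every vertex of $\supp(g)$. This gives $\supp(y)\subseteq L$, hence $x=gyg^{-1}=y\in G_L$.

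\textbf{Main obstacle: the matching in Step 2.} A priori the bijection could match a syllable of $y$ with a syllable of $w$ at a vertex of $S$ lying in the interior of $w$. I would rule this out by an extremal argument. Suppose some $y$-syllable is matched into $w$ in the word $xg$. Among such syllables, consider the rightmost one in $gy$. All syllables of $gy$ to its right are $y$-syllables, and their vertices lie in $S$. Chasing where these syllables go, the image of the extremal syllable would force some $w$-syllable at a vertex of $S$ to become shuffleable to the end of $w$. That would put a vertex of $S$ into $\last(g)$, which is impossible. Dually, a $w$-syllable matched into the $x$-part would put a vertex of $S$ into $\first(g)$.

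I expect this bookkeeping to be the fiddly part. If it becomes unwieldy, I would instead invoke \cite[Proposition~3.4]{AntolinMinasyan2015} directly. The hypothesis $\first(g)\cap S=\last(g)\cap S=\varnothing$ says exactly that $g$ is the minimal-length representative of its double coset $G_SgG_S$. For such representatives that proposition gives the intersection as $G_{S\cap S\cap\link(\supp g)}$, and this equals $G_L$.
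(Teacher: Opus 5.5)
Your proof is correct, but its main line differs from the paper's. The paper gives no independent argument. It specializes \cite[Proposition~3.4]{AntolinMinasyan2015} to $T=S$ and notes that the hypotheses on $\first(g)$ and $\last(g)$ make the $G_S$-prefix and $G_S$-suffix in that proposition trivial. That is exactly your fallback.

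Your direct route works: the elementary reverse inclusion, reducedness of $w\cdot y$ and $x\cdot w$, then matching syllables through the shuffles. The step you flag as the obstacle does close. What drives it, though, is extremality, not the fact that the later $y$-syllables have vertices in $S$.

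Let $t$ be the rightmost $y$-syllable of $w\cdot y$ whose image lands in the $w$-block of $x\cdot w$. By extremality, every $y$-syllable to the right of $t$ lands in the $x$-block. So every syllable to the right of the image of $t$ comes from a position to the left of $t$. Each such pair has had its relative order reversed, so each of these vertices is adjacent to the vertex $z\in S$ of $t$. Hence the image of $t$ can be shuffled to the end of $w$, giving $z\in\last(g)\cap S$, a contradiction.

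Since $|x|=|y|$, this alone forces the $y$-block onto the $x$-block. The dual matching argument is therefore unnecessary. The hypothesis on $\first(g)$ is needed only in Step~1, to make $x\cdot w$ reduced.

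As for the trade-off, the citation is one line and places the statement inside a formula valid for arbitrary $g$ and arbitrary pairs of standard parabolics. Your argument is self-contained and uses only the normal-form theorem, which the paper already has for graphs of arbitrary cardinality. It also shows exactly where each hypothesis enters.
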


\begin{proof}
This is the case $T=S$ of
\cite[Proposition~3.4]{AntolinMinasyan2015}.  In the notation of the proof of
that proposition, the hypotheses on the first and last syllables force the
initial $G_S$-prefix and the final $G_S$-suffix of $g$ to be trivial; that is, $g=g'$ and $h=1$.  The
resulting standard parabolic has vertex set
$Q \coloneqq S\cap\bigcap_{v\in\supp(g)}N(v)$.
\end{proof}

\begin{lemma}\label{lem:finite-core}
Let $\Gamma$ be irreducible and let $A$ be a finite star base.  There is a
finite set $B\supseteq A$ for which $\Gamma_B^{\mathrm c}$ is connected.
Moreover, if $S\supseteq B$, then
$\Gamma_S^{\mathrm c}$ is connected and $\link_\Gamma(S)=\varnothing$.
\end{lemma}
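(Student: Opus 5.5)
Since $\Gamma^{\mathrm c}$ is connected, every two vertices of $A$ are joined by a finite path in $\Gamma^{\mathrm c}$. So first I will build $B$ by taking $A$ together with the vertices of such paths. If $A=\varnothing$, the star base condition gives $V(\Gamma)=Z_\Gamma$. Irreducibility then forces $\Gamma$ to have at most one vertex. In that case take $B=V(\Gamma)$, which is finite, and the statements are trivial, with the link of the full vertex set empty. If $|A|\geq 1$, I fix $a_0\in A$ and, for each $a\in A$, a finite path in $\Gamma^{\mathrm c}$ from $a_0$ to $a$. Let $B$ be $A$ together with all vertices on these paths. Then $B$ is finite, and $\Gamma^{\mathrm c}_B$ is connected because every vertex of $B$ lies on a complement path through $a_0$ that stays inside $B$. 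Here I use that $\Gamma^{\mathrm c}_B$ is the induced subgraph of $\Gamma^{\mathrm c}$ on $B$.

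Next, for $S\supseteq B$ I will show that $\link_\Gamma(S)=\varnothing$. Suppose $z\in\link_\Gamma(S)$. Then $z\notin S$ and $z$ is adjacent to every $a\in A\subseteq S$, so $z\in\bigcap_{a\in A}N^*(a)=Z_\Gamma$. By Observation~\ref{obs_irr}, $Z_\Gamma=\varnothing$ as soon as $\Gamma$ is irreducible with at least two vertices, which is a contradiction. The one-vertex case is covered by the first step, since there $S=V(\Gamma)$.

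Finally, for $S\supseteq B$ I will show that $\Gamma_S^{\mathrm c}$ is connected. Take $s\in S\setminus B$. Since $s$ is not universal ($Z_\Gamma=\varnothing$), the star base property gives some $a\in A$ with $s\notin N^*(a)$. Thus $s$ and $a$ are non-adjacent in $\Gamma$, i.e.\ adjacent in $\Gamma^{\mathrm c}_S$, so every vertex of $S$ is joined directly to the connected set $B$. Hence $\Gamma^{\mathrm c}_S$ is connected.

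The main obstacle is just handling the degenerate cases ($A$ empty, or $\Gamma$ with a single vertex) consistently with the convention that the empty intersection is $V(\Gamma)$. The substantive point is that the star base condition makes $A$ a dominating set in $\Gamma^{\mathrm c}$, and this single observation gives both conclusions.
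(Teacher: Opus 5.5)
Your proof is correct and follows the paper's argument. You build a finite set $B\supseteq A$ with $\Gamma_B^{\mathrm c}$ connected (via paths from a base point $a_0$, where the paper uses a single walk through $A$), then use that every non-universal vertex has a $\Gamma^{\mathrm c}$-neighbour in $A$ to get both the connectivity of $\Gamma_S^{\mathrm c}$ and $\link_\Gamma(S)\subseteq\bigcap_{a\in A}N^*(a)=Z_\Gamma=\varnothing$; your explicit handling of the degenerate cases ($A=\varnothing$ or a single vertex, where $S\supseteq B$ forces $S=V(\Gamma)$) just spells out what the paper dismisses as trivial.
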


\begin{proof}
The claim is trivial when $|V(\Gamma)|=1$, so suppose $|V(\Gamma)| \geq 2$.

Since $\Gamma^c$ is connected, we may choose a finite walk in $\Gamma^c$ which visits every vertex of $A$. Let $B$ be the set of vertices on the walk. Then $B$ is finite,
contains $A$, and $\Gamma_B^{\mathrm c}$ is connected.

Now let $s\in S\setminus B$.  From the definition of a finite star base, there is $a\in A$ such that
$s\notin N^*(a)$; hence $s$ is joined to $a$ by an edge of
$\Gamma^{\mathrm c}$.  Thus every new vertex of $S$ attaches to the connected
subgraph $\Gamma_B^{\mathrm c}$, proving that $\Gamma_S^{\mathrm c}$ is
connected.  Finally,
\[
 \link_\Gamma(S)\subseteq\bigcap_{a\in A}N(a)
 \subseteq\bigcap_{a\in A}N^*(a)=\varnothing.
\]
\end{proof}

\begin{lemma}\label{lem:full-element}
Let $H\leq G$ be parabolically full, $\Gamma$ be irreducible, and let $B$ be as in
Lemma~\ref{lem:finite-core}.  There are a finite set
$F\subseteq H$, a finite set $S\supseteq B$, an element $p\in G$, and an
element $h\in p^{-1}\langle F\rangle p$ such that
$p^{-1}\Pc(F)p=G_S$ and $\Pc(h)=G_S$,
and $\Gamma_S^{\mathrm c}$ is connected and
$\link_\Gamma(S)=\varnothing$.
\end{lemma}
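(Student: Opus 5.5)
Apply Corollary~\ref{cor:capturing} with $D=B$ to obtain a finite $F\subseteq H$ with $B\subseteq S\coloneqq\esupp(F)$. Choose $p$ with $\Pc(F)=pG_Sp^{-1}$, so $p^{-1}\Pc(F)p=G_S$. By Lemma~\ref{lem:finite-core}, $\Gamma_S^{\mathrm c}$ is connected and $\link_\Gamma(S)=\varnothing$, and $S$ is finite by Lemma~\ref{lem:finite-pc}. Put $F'=p^{-1}Fp\subseteq G_S$. Then $\Pc(F')=G_S$, and by Lemma~\ref{lem:compatibility} this is also the parabolic closure of $F'$ computed inside the finite graph product $G_S$. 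Thus $\langle F'\rangle$ is a parabolically full subgroup of the graph product $G_S$ over the finite irreducible graph $\Gamma_S$. It remains to find $h\in\langle F'\rangle$ with $\Pc(h)=G_S$, computed in $G_S$; Lemma~\ref{lem:compatibility} then transfers this to $G$.

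The main obstacle is this last step: producing a single element whose parabolic closure is all of $G_S$. For a finite irreducible defining graph, this is essentially the content of Minasyan--Osin's treatment of parabolically full subgroups. My first attempt would be to cite \cite[Lemma~6.x]{MinasyanOsin2015}, which states that a subgroup of a finite irreducible graph product not contained in any proper parabolic contains an element not contained in any proper parabolic. If a self-contained argument is needed, I would proceed by induction on products.

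Start with elements $f_1,\dots,f_k\in F'$ and consider words such as $g=f_1^{n_1}\cdots f_k^{n_k}$, or products of conjugates, with large exponents. The aim is to enlarge the essential support step by step. Given $g$ with $\Pc(g)=qG_Tq^{-1}$ and $T\subsetneq S$, some $f\in F'$ must lie outside $qG_Tq^{-1}$, because $\Pc(F')=G_S$. One then shows that either $g^Nf$ or $fg^N$ has strictly larger essential support for suitable $N$. The tool for this is the intersection formula of Fact~\ref{fact:parabolic-intersection}, together with the fact that $\link(S)=\varnothing$: no nontrivial parabolic of $G_S$ is normalized by everything. Since $S$ is finite, the induction terminates at $T=S$.

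The delicate point inside the induction is controlling cancellation between $g^N$ and $f$. I would handle it by first cyclically reducing $g$, replacing it by a conjugate within $\langle F'\rangle$, and then using the normal form theorem. Irreducibility of $\Gamma_T$ ensures the support of $g^N$ cannot commute entirely past $f$.
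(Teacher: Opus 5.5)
Your argument is essentially the paper's proof: reduce via Corollary~\ref{cor:capturing} and Lemma~\ref{lem:finite-core} to the finite irreducible graph product $G_S$, apply Minasyan--Osin there to obtain a single $h\in p^{-1}\langle F\rangle p$ whose parabolic closure in $G_S$ is $G_S$ (the precise reference is \cite[Theorem~6.16]{MinasyanOsin2015}), and transfer this to $G$ with Lemma~\ref{lem:compatibility}. The self-contained induction you offer as a fallback is not needed, and as written it would need more care (for instance, $\Gamma_T$ need not be irreducible, so your closing irreducibility remark does not apply as stated).
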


\begin{proof}
By Corollary \ref{cor:capturing}, we may choose a finite subset $F\subseteq H$ such that
$B\subseteq S:=\esupp(F)$.  Choose $p$ with
$p^{-1}\Pc(F)p=G_S$ and let $K = p^{-1}\langle F \rangle p \leq G_S$.

The graph $\Gamma_S$
is finite by Lemma \ref{lem:finite-pc} and irreducible by Lemma~\ref{lem:finite-core}, and the parabolic closure of $K$ in $G_S$ is $G_S$.  Therefore
\cite[Theorem~6.16]{MinasyanOsin2015} supplies a single element $h\in K$ whose parabolic closure in $G_S$
is $G_S$; note that we apply the theorem only in the finite graph product $G_S$.  By Lemma~\ref{lem:compatibility}, the parabolic closure of $h$ in $G$ is also  $G_S$; that is, $\Pc(h)=G_S$.
\end{proof}

\begin{lemma}\label{lem:wpd_existence}
    Suppose $\Gamma$ is irreducible, let $S$ be a finite and irreducible subset of vertices with $|S| \geq 2$ which contains a finite star base of $\Gamma$. Suppose $h \in G$ satisfies $\Pc(h) = G_S$, and let $v \in S$. Then $h$ is a loxodromic WPD element for the action of $G$ on the Bass-Serre tree $\mathcal{T}$ associated with the splitting \[G=G_A *_{G_C} G_B\] where $A = V(\Gamma) \setminus \{v\}$, $B = N^*(v)$, and $C = N(v)$.
\end{lemma}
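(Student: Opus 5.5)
We reduce to the finite-graph argument of Minasyan--Osin for the same amalgam, using the finite star base to control pointwise stabilizers of long segments. Let $\mathcal T$ have vertices the cosets $gG_A$ and $gG_B$ and edges the cosets $gG_C$. Edge stabilizers are the conjugates $gG_Cg^{-1}$, and the pointwise stabilizer of a segment is an intersection of such conjugates. By Fact~\ref{fact:parabolic-basics}(1) and \cite[Proposition~3.4]{AntolinMinasyan2015}, this intersection is again a parabolic subgroup.

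\textbf{Step 1: $h$ is loxodromic.} The element $h$ is elliptic exactly when it fixes a vertex, that is, when $h$ lies in a conjugate of $G_A$ or of $G_B$. The case of $G_A$ is impossible, since $v\in S$ and $\Pc(h)=G_S$: a conjugate $qG_Aq^{-1}\ni h$ would contain $\Pc(h)$, and Lemma~\ref{lem:alignment} would then force $S\subseteq A$. The case of $G_B$ is also impossible. Irreducibility of $S$ together with $|S|\geq2$ gives a vertex $w\in S$ not adjacent to $v$, so $S\not\subseteq N^*(v)=B$.

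\textbf{Step 2: WPD.} First conjugate $h$ to be cyclically reduced, replacing $h$ by $php^{-1}$ and $G_S$ by its conjugate; WPD is conjugation invariant. Since $\mathcal T$ is a tree, it suffices to show that for some $M$ the pointwise stabilizer of the segment $[x,h^Mx]$ is finite, where $x$ is a vertex on the axis of $h$. (In fact we aim for triviality; by local finiteness issues, finiteness of the stabilizer of a long axis segment gives WPD via the standard reduction in \cite{MinasyanOsin2015}.) This stabilizer contains $\bigcap_{k=0}^{M} h^k G_C h^{-k}$ up to conjugacy, and it is contained in a comparable intersection. For a suitable power $h^m$ whose reduced form contains every letter of $S$, and with $\first$ and $\last$ avoiding $C$ after cyclic reduction, Fact~\ref{fact:parabolic-intersection} applies iteratively. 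The resulting intersection is
\[
G_{C\cap\bigcap_{u\in\supp(h^m)}N(u)}\subseteq G_{\bigcap_{u\in S}N(u)}=G_{\link_\Gamma(S)}=1.
\]
The last equality holds because $S$ contains a finite star base and $Z_\Gamma=\varnothing$, as in Lemma~\ref{lem:finite-core}. Thus long segments of the axis have trivial pointwise stabilizer. This gives WPD, and in fact acylindricity-type control along the axis: any $k$ moving both $x$ and $h^Mx$ by at most $\varepsilon$ must, after translating by one of finitely many elements of $\langle h\rangle$ and an element in a bounded ball, fix a long subsegment. Hence only finitely many such $k$ exist modulo that stabilizer.

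\textbf{Main obstacle.} The hardest step is making the iterative intersection rigorous. We need that $\supp(h)\supseteq S$ for the cyclically reduced $h$, which should follow from $\Pc(h)=G_S$ and Lemma~\ref{lem:finite-pc} after conjugating so that $\esupp=\supp$. We also need to handle the $\first$/$\last$ hypothesis of Fact~\ref{fact:parabolic-intersection} when $h$ has syllables in $C$. Our first attempt is to peel off the maximal prefix of $h^m$ lying in $G_C$, conjugating it into the edge group, and to apply the fact to the remainder.
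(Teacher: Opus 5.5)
Your Step~1 is correct, and it is a cleaner route to loxodromicity than the paper's. Vertex stabilizers are conjugates of $G_A$ and $G_B$, and minimality of $\Pc(h)=G_S$ together with Lemma~\ref{lem:alignment} rules out $S\subseteq A$ and $S\subseteq N^*(v)$. The skeleton of your Step~2 is also the paper's: a trivial pointwise stabilizer of two points on the axis, then \cite[Corollary~4.3]{MinasyanOsin2015}, with the final computation via Fact~\ref{fact:parabolic-intersection} and the star base. However, the step you flag as the main obstacle is left open, and it is the actual content of the lemma, so there is a genuine gap.

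Fact~\ref{fact:parabolic-intersection} needs an element $x$ whose $\first$ and $\last$ sets avoid $C$, and you then need the star base inside $\supp(x)$. Cyclic reduction does not give the first condition, and stripping the $G_C$-prefix and suffix can destroy the second. For instance, let $\Gamma$ have vertices $v,a,b,c$ and edges $va,vc,bc$. Let $S=\{v,a,b\}$, which is irreducible and a star base, and let $h=a_1v_1b_1$ with non-trivial syllables; the retractions $\rho_{\{u\}}$, $u\in S$, show $\Pc(h)=G_S$. Here $C=\{a,c\}$ and the edge $G_C$ lies on the axis. Stripping $a_1$ leaves $v_1b_1$, and $G_C\cap hG_Ch^{-1}=a_1G_ca_1^{-1}\neq1$. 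So you must fix the power and prove that the support survives the stripping.

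The paper does this with $w=d^{2|S|+1}$ for a minimal-length conjugate $d$. For $s\in S$ it chooses a path $v=s_0,\dots,s_k=s$ in $\Gamma_S^{\mathrm c}$. It then picks an $s_i$-syllable $\alpha_i$ in the $(i+1)$-st copy of $d$ and $\beta_i$ in the $(2|S|+1-i)$-th copy, with $\alpha_k=\beta_k$ in the middle copy. Consecutive members of the chain $\alpha_0,\dots,\alpha_k,\dots,\beta_0$ do not commute, so their order is rigid in every reduced form. The ends are $v$-syllables, which are never stripped. Hence the middle $s$-syllable survives and $S\subseteq\supp(x)$. This argument works verbatim with your $G_C$ in place of the paper's $G_A$.

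There are also two smaller points. First, the lower bound ``contains $\bigcap_k h^kG_Ch^{-k}$'' is irrelevant, and no iteration is needed: for an edge $e$ on the axis, $\PStab([e,h^Me])=G_C\cap h^MG_Ch^{-M}$ exactly. Second, you must verify that your base edge actually lies on the axis of the conjugate you chose. The paper obtains this from the cyclically reduced amalgam normal form $d=U_1g_1\cdots U_ng_n$, which your Step~1 never establishes.
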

\begin{proof}
    Let $d$ be a conjugate of $h$ of minimal graphical word length. Write
$d=U_1g_1U_2\cdots U_ng_nU_{n+1}$,
where each $1\neq g_i \in G_v$ and each $U_i$ is a word containing no $v$-syllables. After conjugating by $U_{n+1}$, we may assume that $U_{n+1} = 1$; this does not increase the graphical word length. Therefore, we write 
\begin{align}\label{eq:for_d}
    d = U_1 g_1 \cdots U_{n} g_n.
\end{align}
Since $d$ is a conjugate of $h$, the assumption $\esupp(h) = S$ and  the inclusion part of Lemma \ref{lem:alignment} imply $\supp(d) \supset S$. In particular, this implies that the word $d$ is not the length-one word $g_1$ or the word $U_1$, since $S$ contains both $v$ and other vertices.

If $n=1$, then $U_1 \not\in G_{N(v)}$: otherwise $\supp(d) \subseteq N^*(v)$, which contradicts $S \subseteq \supp(d)$, because $\Gamma_S^c$ is connected and $|S| \geq 2$ and therefore $v$ has a non-neighbor in $S$.

Now consider the case $n\geq 2$. If some $U_i$ with $2 \leq i \leq n$ represents an element of $G_{N(v)}$, we may combine $g_{i-1}$ and $g_{i}$ by commuting $g_i$ past $U_i$. Similarly, if $U_1$ represents an element of $G_{N(v)}$, conjugating by $g_n$ and combining $g_n$ with $g_1$ will yield a shorter conjugate of $h$.

Therefore, each $U_i$ represents an element of $G_A \setminus G_C$ and each $g_i$ represents an element of $G_B \setminus G_C$.
Therefore, the displayed equation \eqref{eq:for_d} for $d$ is an amalgamated product normal form which begins and ends with elements of different factors. It therefore acts loxodromically on the Bass-Serre tree $\mathcal{T}$: for all $k \in \N$, the amalgam word length of $d^k$ is $k$ times that of $d$.

Note also that $d$ is graphically cyclically reduced: if it were not, a conjugate of $d$ would have shorter graphical word length. It also has graphical length at least $2$, since $|S| \geq 2$.

We next show that $d$ is a WPD element for the action of $G$ on $\mathcal{T}$. It suffices to show that $w \coloneq d^{2|S| + 1}$ is a WPD element.

Fix a graphically cyclically reduced word $D$ representing $d$. Since $D$ is graphically cyclically reduced and of length at least $2$, the concatenation
\[
    W \coloneq D^{2|S| + 1}=\underbrace{D\cdots D}_{2|S|+1\text{ copies}}
\]
is a graphically reduced word representing $w$. 

    We reduce the word $W$ iteratively by the following procedure. Suppose that $\first(W) \cap A \neq \emptyset$. Then some reduced representative of $W$ begins with a syllable $y \in G_w$ for some $w \neq v$. Thus, we may write $W=yW'$ as a graphically reduced word. Replace $W$ by $W'$; this amounts to left-multiplying by $y^{-1} \in G_A$.

Similarly, if $\last(W) \cap A \neq \varnothing$, we may write $W = W'y$ for $y \in G_A$ and replace $W$ with $W'$.

Note that at each step, the resulting word $W'$ is graphically reduced, since it occurs as a subword of the graphically reduced word $W$, possibly after shuffling syllables.

Each reduction reduces the length of $W$, and so eventually, no such reductions are possible. Let $X$ denote the resulting reduced word and let $x \in G$ be the element it represents. By construction,
\[
x\in G_AwG_A
\qquad\text{and}\qquad
\first(x)\cap A=\last(x)\cap A=\varnothing.
\]

We claim that $S \subseteq \supp(x)$. By assumption, $v \in S \setminus A$, and $\Gamma_S^c$ is connected.  Number the copies of $D$ in $W = D^{2|S| + 1}$ as $D_1,D_2,\dots,D_{2|S|+1}$ and consider the copy $D_{|S| + 1}$.

Let $s\in S$, and choose $\alpha$ to be any syllable in $D_{|S|+1}$ with vertex label $s$. Choose a path $v=s_0,s_1,\dots,s_k=s$ in $\Gamma_S^c$ of length $k\leq |S|-1$. Consecutive vertices on this path are not adjacent in $\Gamma$, so syllables belonging to the corresponding vertex groups do not commute. For each $0 \leq i < k$, choose an $s_i$-syllable $\alpha_i$ in the copy $D_{i+1}$, and an $s_i$-syllable $\beta_i$ in the copy $D_{2|S|+1-i}$, and let $\alpha_k = \alpha = \beta_k$.

At every step of the reduction of $W$, we claim the syllables $\alpha_i,\beta_i$ occur in the order \[
\alpha_0,\alpha_1,\dots,\alpha_k=\alpha=\beta_k,\dots,\beta_1,\beta_0
\] in any graphical normal form of the current word. Indeed suppose these syllables occur, in that order, at a particular step in the reduction. Since the syllables occur in the order of a path in $\Gamma_S^{\mathrm c}$, their vertex groups do not commute, so none can be shuffled past a syllable which is adjacent in the list. Therefore, none of the syllables $\alpha_1,\dots,\alpha_k=\alpha=\beta_k,\dots,\beta_1$ can be shuffled past $\alpha_0$ or $\beta_0$. Since the syllables $\alpha_0,\beta_0$ are not in $G_A$, they cannot be removed from the next reduction step. It follows that none of the specified syllables can be removed in the next reduction step. This proves the claim.

In particular, the syllable $\alpha$ occurs in the final word $X$. Since $s \in S$ was arbitrary and $\alpha$ has vertex label $s$, we see $S \subseteq \supp(x)$.

Finally, we may apply Fact~\ref{fact:parabolic-intersection} to obtain 
\[
 xG_Ax^{-1}\cap G_A
 =G_P,
 \qquad
 P=\left(V(\Gamma) \setminus \{v\} \right)\cap\bigcap_{s\in\supp(x)}N(v).
\]
As $S\subseteq\supp(x)$, and $S$ contains a finite star base,
\[
 P\subseteq\bigcap_{s\in S}N(s)
   \subseteq\bigcap_{s\in S}N^*(s)=\varnothing.
\]
Thus $xG_Ax^{-1}\cap G_A=1$.  

Write $x = a_1 w a_2$ for $a_1,a_2 \in G_A$. Then \begin{align*}
    1 &= xG_Ax^{-1}\cap G_A \\
    &= a_1 w G_A w^{-1}a_1^{-1} \cap G_A \\
    &=a_1(wG_Aw^{-1} \cap G_A)a_1^{-1}.
\end{align*}

Therefore, \begin{align}\label{eq:trivial_intersection}
    wG_Aw^{-1} \cap G_A = 1.
\end{align}

From equation \eqref{eq:for_d}, we see that if $u \in \mathcal{T}$ is the standard vertex with stabilizer $G_A$, then the axis of $d$ is exactly $\bigcup_{k \in \mathbb Z}[d^ku,d^{k+1}u]$. In particular, both $u$ and $du$ lie on the axis of $d$.

The vertex $wu$ has stabilizer $wG_Aw^{-1}$. Therefore, \eqref{eq:trivial_intersection} gives $\PStab_G(u,wu) = 1$. Both of these vertices lie on the axis of $w$, which is the same as the axis of $d$. Therefore, \cite[Corollary 4.3]{MinasyanOsin2015} implies $w$ satisfies the WPD condition for the action of $G$ on $\mathcal{T}$. Therefore, so does its root $d$, and its conjugate $h$.
\end{proof}

\begin{theorem}\label{thm:pf_ah}
    Let $\Gamma$ be an irreducible graph on at least two vertices, and suppose $\Gamma$ has a finite star base.
    Let $H\leq G$ be a parabolically full subgroup. Then $H$ is virtually cyclic or acylindrically hyperbolic.
\end{theorem}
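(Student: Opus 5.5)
The plan is to produce a single loxodromic WPD element of $G$ that lies in $H$, for a suitable action of $G$ on a tree. Once we have it, we restrict the action to $H$ and apply the WPD characterization \cite[Theorem~1.2]{Osin2016}.

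Fix a finite star base $A$ of $\Gamma$, and let $B\supseteq A$ be the finite set given by Lemma~\ref{lem:finite-core}. Since $|V(\Gamma)|\geq 2$ and $\Gamma^{\mathrm c}$ is connected, we may enlarge $B$ along a walk in $\Gamma^{\mathrm c}$ so that $|B|\geq 2$ and $\Gamma^{\mathrm c}_B$ is still connected. Lemma~\ref{lem:full-element} then gives a finite $F\subseteq H$, a finite $S\supseteq B$, an element $p\in G$ and an element $h\in p^{-1}\langle F\rangle p$ with
\[
\Pc(h)=G_S,\qquad \Gamma_S^{\mathrm c}\text{ connected},\qquad |S|\geq 2 .
\]
Pick any $v\in S$. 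Lemma~\ref{lem:wpd_existence} applies, because $S$ is finite, irreducible, has at least two vertices and contains the finite star base $A$. So $h$ is a loxodromic WPD element for the action of $G$ on the Bass--Serre tree $\mathcal T$ of the splitting $G=G_{V(\Gamma)\setminus\{v\}}*_{G_{N(v)}}G_{N^*(v)}$.

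Next we transport this element into $H$. The element $php^{-1}$ lies in $\langle F\rangle\leq H$. Conjugating by $p$ is an isometry of $\mathcal T$ that intertwines the actions of $h$ and $php^{-1}$. Hence $php^{-1}$ is also loxodromic, and the WPD sets for $php^{-1}$ at a point $x$ are exactly the $p$-conjugates of the WPD sets for $h$ at $p^{-1}x$. So $php^{-1}$ is a loxodromic WPD element of $G$ acting on $\mathcal T$.

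Now restrict the action to $H$. Loxodromicity is a property of the single element, so it is unaffected. For WPD, the set $\{k\in H: d(x,kx)\leq\varepsilon,\ d(g^Mx,kg^Mx)\leq\varepsilon\}$ is a subset of the corresponding finite set for $G$, so it is finite as well. Thus $H$ contains a loxodromic WPD element for its action on the hyperbolic space $\mathcal T$, and \cite[Theorem~1.2]{Osin2016} gives that $H$ is virtually cyclic or acylindrically hyperbolic.

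The main obstacle has already been handled by Lemma~\ref{lem:wpd_existence}. What remains to check is the bookkeeping:
\begin{itemize}
\item the hypothesis $|S|\geq 2$ needed there, which is why we enlarge $B$ first, using monotonicity of essential supports via Corollary~\ref{cor:capturing};
\item that the star base lies inside $S$, which holds because $A\subseteq B\subseteq S$.
\end{itemize}
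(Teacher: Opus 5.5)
Your proposal is correct and follows essentially the same route as the paper: Lemma~\ref{lem:finite-core}, then Lemma~\ref{lem:full-element}, then Lemma~\ref{lem:wpd_existence}, then conjugating $h$ back into $H$ and applying \cite[Theorem~1.2]{Osin2016} to the restricted action. The only difference is that enlarging $B$ to force $|S|\geq 2$ is unnecessary: since $Z_\Gamma=\varnothing$ and $a\in N^*(a)$, any star base of an irreducible graph with at least two vertices already has at least two elements, which is how the paper obtains $|S|\geq 2$.
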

\begin{proof}
    Let $A \subset V(\Gamma)$ be a finite star base. Let $B \supset A$ be as in Lemma \ref{lem:finite-core}. Then Lemma \ref{lem:full-element} gives a set $S \supset B$, an element $p \in G$, and an element $h \in p^{-1}Hp$ such that $\Pc(h)=G_S$. Since $\Gamma$ is irreducible and has at least two vertices, any finite star base contains at least 2 vertices. Thus $|S| \geq 2$. Choose a vertex $v \in S$. Lemma \ref{lem:wpd_existence} therefore gives that $h$ is a loxodromic WPD element for the action of $G$ on a tree. Since $h$ is conjugate to the element $php^{-1} \in H$, the element $php^{-1}$ is a loxodromic WPD element for the restricted action of $H$. Thus, by \cite[Theorem~1.2]{Osin2016},  $H$ is either virtually cyclic or acylindrically hyperbolic.
\end{proof}

\begin{remark}\label{rem:counterexample}
    The existence of an acylindrically hyperbolic, parabolically full subgroup of an irreducible graph product does not imply that the underlying graph has a finite star base. For example, let $\Gamma^c$ be the infinite ray with vertex set $\N = \{0,1,2,\dots\}$. Then $\Gamma$ is irreducible and is easily seen not to have a finite star base. However, let
$G_0=F(x_1,x_2,\ldots)$
and, for $n\geq1$, let $G_n$ be any non-trivial group with a chosen
element $1\neq c_n\in G_n$. Set $h_n=x_nc_n$ and
$H=\langle h_n:n\geq1\rangle$.
The canonical retraction $G\to G_0$ sends $h_n$ to $x_n$. Hence the
elements $h_n$ freely generate $H$, since any relation among the $h_n$ would yield a relation under the retraction. So $H\cong F_\infty$.
In particular, $H$ is acylindrically hyperbolic.

Moreover, for every $n\geq1$, the element $h_n$ is graphically cyclically reduced
with $\esupp(h_n)=\{0,n\}$.  If $H\leq qG_Tq^{-1}$, then
$\Pc(h_n)\leq qG_Tq^{-1}$
for every $n$. Lemma \ref{lem:alignment} gives $\{0,n\}\subseteq T$ for
every $n$, and hence $T=V(\Gamma)$. Thus $H$ is not contained in any
proper parabolic subgroup.
\end{remark}

However, acylindrical hyperbolicity of the full graph product, or, more generally, of \textit{vertex-full} subgroups, does guarantee the existence of a finite star base.
\begin{definition}
Let $G=\Gamma\cG$ be a graph product of groups.  If $H\leq G$ satisfies
$H\cap G_v\neq1$ for every $v\in V(\Gamma)$, we say that $H$ is a
\emph{vertex-full subgroup} of $G$.
\end{definition}

The definition depends on the specified graph-product decomposition of
$G$.  Moreover, a vertex-full subgroup need not itself be the graph
product of the intersections $H\cap G_v$.  For example, let
$G=F(a,b)$ be the free group viewed as the graph product of the two vertex
groups $\langle a\rangle$ and $\langle b\rangle$.  Then
$H=\langle a^2,b^2,ab\rangle$ is a vertex-full subgroup of $G$, but
it is not the free product of $H\cap\langle a\rangle$ and
$H\cap\langle b\rangle$, since it is free of rank three.

\begin{lemma}\label{lem:vertex_trans_center}
    Let $G=\Gamma\cG$, where $\Gamma$ is irreducible and has at least two
vertices, and let $H\leq G$ be a vertex-full subgroup. Then $Z(H) = 1$.
\end{lemma}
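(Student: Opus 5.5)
Let $z\in Z(H)$. For every $v\in V(\Gamma)$ we fix a non-trivial element $x_v\in H\cap G_v$, which exists by vertex-fullness. Since $z$ commutes with $x_v$, Fact~\ref{fact:parabolic-basics}(3) gives
\[
 z\in C_G(x_v)=C_{G_v}(x_v)\times G_{N(v)}\leq G_{N^*(v)}
\]
for every vertex $v$. Hence
\[
 z\in\bigcap_{v\in V(\Gamma)}G_{N^*(v)}=G_{\bigcap_{v}N^*(v)}=G_{Z_\Gamma},
\]
where the equality is Fact~\ref{fact:parabolic-basics}(1), applied to an arbitrary family of standard parabolics. By Observation~\ref{obs_irr}, irreducibility together with $|V(\Gamma)|\geq2$ forces $Z_\Gamma=\varnothing$. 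Therefore $z\in G_\varnothing=1$, and so $Z(H)=1$.

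The only step that needs any care is the use of the "arbitrary intersections" clause of Fact~\ref{fact:parabolic-basics}(1) when $\Gamma$ is infinite. There is an alternative that needs only finitely many vertices. Write $z$ in reduced form; its support $\supp(z)$ is finite. If $z\neq1$, pick $u\in\supp(z)$, and by Observation~\ref{obs_irr} pick a non-neighbour $w$ of $u$. Then $z\in G_{N^*(w)}$, and the normal-form theorem shows that $\supp(z)\subseteq N^*(w)$, which contradicts $u\notin N^*(w)$. The inclusion $\supp(z)\subseteq N^*(w)$ holds because $G_{N^*(w)}$ is a retract of $G$ under $\rho_{N^*(w)}$, and reduced words of elements of $G_{N^*(w)}$ use only letters from $N^*(w)$. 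This argument avoids infinite intersections altogether, so I would present it as the main proof.
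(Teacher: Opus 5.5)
Your proof is correct and is essentially the paper's: you place $z$ in $C_G(x_v)\leq G_{N^*(v)}$ for every $v$ via Fact~\ref{fact:parabolic-basics}(3), then intersect using Fact~\ref{fact:parabolic-basics}(1) and $Z_\Gamma=\varnothing$. Your support-based variant is also valid, but it is just the elementwise content of that intersection formula (an element lies in $G_A$ exactly when its support lies in $A$), so an infinite $\Gamma$ causes no difficulty in either version.
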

\begin{proof}
    Let $z \in Z(H)$. For every vertex $v \in V(\Gamma)$, let $1\neq h_v \in G_v \cap H$. Then $zh_v=h_vz$, so $z\in C_H(h_v)\leq C_G(h_v)$.  By
Fact~\ref{fact:parabolic-basics}(3), $z\in G_{N^*(v)}$.  Since this holds
for every $v\in V(\Gamma)$, Fact~\ref{fact:parabolic-basics}(1) gives
$$z\in\bigcap_{v\in V(\Gamma)}G_{N^*(v)}
=G_{\bigcap_{v\in V(\Gamma)}N^*(v)}=G_\varnothing=1.$$
Therefore $Z(H)=1$.
\end{proof}

We now state the relationship between finite star bases and discreteness of
inner automorphisms.  The irreducible case is particularly clean because the
center is trivial.

\begin{proposition}\label{prop:irreducible-fsb}
Let $G=\Gamma\cG$, where $\Gamma$ is irreducible and has at least two
vertices. Let $H\leq G$ be a vertex-full subgroup.  The following are equivalent.
\begin{enumerate}[(1)]
\item $\Gamma$ has a finite star base.
\item There is a finite set $F\subseteq H$ such that $C_H(F)=1$.
\item $\Inn(H)$ is discrete in $\Aut(H)$.
\end{enumerate}
\end{proposition}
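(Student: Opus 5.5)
The plan is to prove $(2)\Leftrightarrow(3)$ directly from earlier results, and then $(1)\Leftrightarrow(2)$ by comparing centralizers of finite sets with intersections of stars.

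\emph{The equivalence $(2)\Leftrightarrow(3)$.} Lemma~\ref{lem:vertex_trans_center} gives $Z(H)=1$. Lemma~\ref{lem:inn-discrete-centralizer}, applied to $K=H$, then says that $\Inn(H)$ is discrete exactly when some finite $F\subseteq H$ satisfies $C_H(F)=Z(H)=1$.

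\emph{The implication $(1)\Rightarrow(2)$.} Let $A$ be a finite star base. By Observation~\ref{obs_irr}, $Z_\Gamma=\varnothing$, so $\bigcap_{a\in A}N^*(a)=\varnothing$. For each $a\in A$, vertex-fullness lets us choose $1\neq h_a\in H\cap G_a$, and we set $F=\{h_a:a\in A\}$. Fact~\ref{fact:parabolic-basics}(3) gives
\[
C_G(h_a)=C_{G_a}(h_a)\times G_{N(a)}\leq G_{N^*(a)}.
\]
Hence, by Fact~\ref{fact:parabolic-basics}(1),
\[
C_H(F)\leq\bigcap_{a\in A}G_{N^*(a)}=G_{\bigcap_{a\in A}N^*(a)}=G_\varnothing=1.
\]
One point needs care: if $A=\varnothing$, then $\Gamma$ would be complete, which is impossible since $\Gamma$ is irreducible with at least two vertices.

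\emph{The implication $(2)\Rightarrow(1)$, the main step.} Suppose $F\subseteq H$ is finite with $C_H(F)=1$. Put $D=\supp(F)$; this is finite by the normal-form theorem. I claim that $D$ is a star base, i.e.\ $\bigcap_{d\in D}N^*(d)=\varnothing$.

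Suppose instead that some vertex $u$ lies in $N^*(d)$ for every $d\in D$. First take the case $u\notin D$. Then $u$ is adjacent to every vertex of $D$, so $G_u$ commutes with $G_D\ni f$ for each $f\in F$. Choosing $1\neq h\in H\cap G_u$ by vertex-fullness, we get $h\in C_H(F)$, a contradiction.

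The harder case is $u\in D$, with $u$ adjacent to all of $D\setminus\{u\}$. Then $G_D=G_u\times G_{D\setminus\{u\}}$. An element $1\neq h\in H\cap G_u$ commutes with the $G_{D\setminus\{u\}}$-parts of the elements of $F$, but need not commute with their $G_u$-parts, since $G_u$ may be non-abelian. The fix is to enlarge $D$. Since $u$ is not universal, it has a non-neighbor $w$, and necessarily $w\notin D$.

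I then run the argument with $D'=D\cup\{w\}$, but applied to the intersection rather than to centralizers. Any vertex $u'\in\bigcap_{d\in D'}N^*(d)$ lies in $N^*(u)\cap N^*(w)$ for $u\in D$, and in particular $u'\neq u$. Iterating this is awkward, so instead I will directly show
\[
C_H(F)\supseteq H\cap G_{\bigcap_{d\in D}N^*(d)\setminus D}
\]
and handle the vertices of $D$ separately. Let $X=\bigcap_{d\in D}N^*(d)$. Every $x\in X\cap D$ is adjacent to all of $D\setminus\{x\}$. Hence $D\setminus X$ together with finitely many non-neighbors $w_x$ of the finitely many $x\in X\cap D$ yields a finite set $D''$ whose star-intersection avoids $X\cap D$.

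It therefore suffices to show that $X\setminus D=\varnothing$, which follows from the first case. Then $D''=D\cup\{w_x:x\in X\cap D\}$ is a finite star base, because any point in its star-intersection lies in $X$, is not in $D$, and also cannot lie in $X\cap D$.

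I expect this bookkeeping for vertices of $\supp(F)$ that commute with everything else in $\supp(F)$ to be the only real obstacle. It is resolved by adding finitely many non-neighbors, which uses irreducibility through Observation~\ref{obs_irr}.
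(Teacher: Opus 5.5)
Your proposal is correct and follows the paper's proof essentially step for step: $(2)\Leftrightarrow(3)$ via $Z(H)=1$ and Lemma~\ref{lem:inn-discrete-centralizer}; $(1)\Rightarrow(2)$ via Fact~\ref{fact:parabolic-basics}(3) applied to non-trivial elements of $H\cap G_a$; and $(2)\Rightarrow(1)$ by enlarging $\supp(F)$ with non-neighbors and using vertex-fullness to show that no vertex outside $\supp(F)$ is adjacent to all of $\supp(F)$. The only difference is cosmetic: the paper adds a non-neighbor for every vertex of $\supp(F)$, whereas you add one only for the vertices of $\supp(F)$ lying in $\bigcap_{d\in D}N^*(d)$, and your abandoned detour through $D'=D\cup\{w\}$ can simply be deleted.
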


\begin{proof}
By Lemma~\ref{lem:vertex_trans_center}, one has $Z(H)=1$.  Thus the
equivalence of (2) and~(3) follows from
Lemma~\ref{lem:inn-discrete-centralizer}.

Assume (1), and let $A$ be a finite star base.  For every $a\in A$, choose
$1\neq x_a\in H \cap G_a$, and put $F=\{x_a:a\in A\}$.  By
Fact~\ref{fact:parabolic-basics}(3),
$C_H(x_a) \leq C_G(x_a)\leq G_{N^*(a)}$.  Consequently,
$$C_H(F)\leq \bigcap_{a\in A}G_{N^*(a)}
=G_{\bigcap_{a\in A}N^*(a)}=1,$$
where we used Fact~\ref{fact:parabolic-basics}(1) and the fact that
$A$ is a star base.  Thus (1) implies (2).

Conversely, assume (2), and put $A_0=\supp(F)$.  For every $a\in A_0$,
choose a vertex $a'\notin N^*(a)$; this is possible because $\Gamma$ has no
universal vertices.  Set $$A=A_0\cup\{a':a\in A_0\}.$$
We claim that $A$ is a star base.  Let $v\in V(\Gamma)$.  If $v\in A_0$,
then $v$ is not adjacent to the chosen vertex $v'\in A$.  Suppose that
$v\notin A_0$, and choose $1\neq x\in H \cap G_v$.  Since $C_H(F)=1$, there is
$f\in F$ with $[x,f]\neq1$.  If $v$ were adjacent to every vertex of
$\supp(f)$, then $x$ would commute with every syllable of a reduced word for
$f$, and hence with $f$, a contradiction.  Thus some
$a\in\supp(f)\subseteq A_0\subseteq A$ is not adjacent to $v$.  We have
proved that every vertex has a non-neighbor in $A$, equivalently
$\bigcap_{a\in A}N^*(a)=\varnothing$.  Hence (2) implies (1).
\end{proof}

\begin{proposition}\label{prop:vertex_full_are_pf}
        Vertex-full subgroups are parabolically full.
\end{proposition}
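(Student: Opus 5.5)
Let $H\leq G$ be vertex-full. We will verify that $H$ is not contained in any proper parabolic subgroup directly, rather than through Proposition~\ref{prop:parabolic-fullness-finite-supports}. So suppose that $H\leq qG_Tq^{-1}$ for some $q\in G$ and $T\subseteq V(\Gamma)$; the goal is to show that $T=V(\Gamma)$. Fix $v\in V(\Gamma)$ and choose $1\neq x_v\in H\cap G_v$. The subgroup $\langle x_v\rangle$ is then a non-trivial subgroup lying both in $G_v$ and in $qG_Tq^{-1}$. Consequently
\[
 G_v\cap qG_Tq^{-1}\neq 1.
\]
The aim is to conclude from this that $v\in T$.

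The tool for this step is the parabolic intersection result \cite[Proposition~3.4]{AntolinMinasyan2015}, used exactly as in the proof of Lemma~\ref{lem:vertex-conjugates}. It gives
\[
 qG_Tq^{-1}\cap G_v=hG_Ph^{-1}
\]
for some $P\subseteq T\cap\{v\}$ and some $h\in G_v$. Since this intersection is non-trivial, $P$ cannot be empty, so $P=\{v\}$ and hence $v\in T$.

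There is an alternative route that avoids quoting the proposition. The parabolic closure $\Pc(\{x_v\})$ is the smallest parabolic subgroup containing $x_v$. It is contained in $G_v$, and it is non-trivial, so it equals $G_v$ and $\esupp(\{x_v\})=\{v\}$. By minimality of the parabolic closure, $G_v\leq qG_Tq^{-1}$, and the containment part of Lemma~\ref{lem:alignment} then gives $\{v\}\subseteq T$. This route needs one small justification. The only standard parabolic subgroups contained in $G_v$ are $1$ and $G_v$, and conjugates of $G_v$ that lie inside $G_v$ are equal to $G_v$ by Fact~\ref{fact:parabolic-basics}(5). We will also need to check that a conjugate $pG_Sp^{-1}\leq G_v$ forces $S\subseteq\{v\}$; this again follows from Lemma~\ref{lem:alignment}.

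Since $v$ was arbitrary, $T=V(\Gamma)$, and so $qG_Tq^{-1}=G$ is not proper. Hence $H$ is parabolically full. We do not expect a real obstacle here. The only care needed is in making sure the cited intersection or containment results apply to graphs of arbitrary cardinality, and Section~\ref{sec:preliminaries} already records that they do.
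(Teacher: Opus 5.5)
Your proof is correct, and your second route is exactly the paper's argument. The paper picks $1\neq x_v\in H\cap G_v$, observes $G_v=\Pc(x_v)\leq qG_Tq^{-1}$ by minimality of the parabolic closure, and applies the containment part of Lemma~\ref{lem:alignment} to get $v\in T$. Your check that $\Pc(\{x_v\})=G_v$, via Lemma~\ref{lem:alignment} and Fact~\ref{fact:parabolic-basics}(5), supplies a step the paper states without comment.

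Your primary route is genuinely different. It applies \cite[Proposition~3.4]{AntolinMinasyan2015} directly to the non-trivial intersection $qG_Tq^{-1}\cap G_v$, just as in Lemma~\ref{lem:vertex-conjugates}. This bypasses parabolic closures entirely, so it needs neither the identification $\Pc(x_v)=G_v$ nor the Genevois--Martin input behind Lemma~\ref{lem:alignment}. That makes it the more self-contained of the two.

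The paper's route, in contrast, keeps the argument inside the parabolic-closure framework of Section~\ref{sec:ah}. It is really the first paragraph of the proof of Proposition~\ref{prop:parabolic-fullness-finite-supports}, specialized to the one-element sets $\{x_v\}$, whose essential supports are $\{v\}$. Both routes are valid for graphs of arbitrary cardinality.
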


\begin{proof}
    Suppose $H \leq G$ is vertex-full and $H \leq q G_T q^{-1}$ for some $q \in G$ and $T\subset V(\Gamma)$. For every $v \in V(\Gamma)$, choose $1 \neq x_v \in H \cap G_v$. Then $G_v = \Pc(x_v) \leq qG_T q^{-1}$ by minimality of $\Pc(x_v)$. Lemma \ref{lem:alignment} then implies $v \in T$. Since this holds for every vertex, $V(\Gamma) = T$ and $H$ is parabolically full.
\end{proof}

\begin{theorem}\label{thm:ah-vertex_trans}
Let $G=\Gamma\cG$ be a graph product of non-trivial groups over an
irreducible graph $\Gamma$ with at least two vertices, and let $H\leq G$ be
a vertex-full subgroup.  Then $H$ is virtually cyclic or
acylindrically hyperbolic if and only if $\Gamma$ has a finite star base.
Furthermore, if $H$ is virtually cyclic, then $|V(\Gamma)|=2$ and
$H\cap G_v\cong C_2$ for both vertices $v\in V(\Gamma)$.
\end{theorem}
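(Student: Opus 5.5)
The argument splits into the two directions of the equivalence and then a separate analysis of the virtually cyclic case.

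\emph{The two directions.} Suppose first that $\Gamma$ has a finite star base. By Proposition~\ref{prop:vertex_full_are_pf}, $H$ is parabolically full, so Theorem~\ref{thm:pf_ah} applies directly and shows that $H$ is virtually cyclic or acylindrically hyperbolic. Conversely, suppose $H$ is virtually cyclic or acylindrically hyperbolic. Lemma~\ref{lem:ah-finite-centralizer} gives a finite $F\subseteq H$ with $C_H(F)=Z(H)$. Lemma~\ref{lem:vertex_trans_center} gives $Z(H)=1$, so $C_H(F)=1$. The implication (2)$\Rightarrow$(1) of Proposition~\ref{prop:irreducible-fsb} then produces a finite star base.

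\emph{The virtually cyclic case.} Assume $H$ is virtually cyclic; this step carries the real content. Choose $1\neq x_v\in H\cap G_v$ for each vertex $v$. Since $\Gamma$ is irreducible with at least two vertices, some vertex $v$ has a non-neighbor $w$. I would then look at the subgroup $\langle x_v,x_w\rangle$. Using the retraction $\rho_{\{v,w\}}$ and normal forms, this subgroup is the free product $\langle x_v\rangle*\langle x_w\rangle$ inside $G_v*G_w=G_{\{v,w\}}$. A free product of two nontrivial groups is virtually cyclic only if both factors have order $2$, since otherwise it contains a nonabelian free subgroup. Hence every $x\in H\cap G_v$ with $x\neq 1$ has order $2$, and the same holds for $w$.

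It remains to upgrade this to $H\cap G_v\cong C_2$ and $|V(\Gamma)|=2$.
\begin{itemize}
\item If $H\cap G_v$ contained two distinct involutions $x,x'$, then $\langle x,x',x_w\rangle$ would contain $\langle xx'\rangle * \langle x_w\rangle$ or a similar free product of two nontrivial groups not both of order $2$. That is not virtually cyclic, a contradiction. So $H\cap G_v=C_2$, and the same argument applies to every vertex that has a non-neighbor, which is all of them by Observation~\ref{obs_irr}.
\item Suppose there were a third vertex $u$. Irreducibility lets me choose a path in $\Gamma^c$ so that $u$ is non-adjacent to $v$ or $w$, say to $v$. If $u$ is also non-adjacent to $w$, then $\langle x_u,x_v,x_w\rangle=C_2*C_2*C_2$, which contains $F_2$. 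If instead $u$ is adjacent to $w$, then the group is $C_2*(C_2\times C_2)$ or a related graph product of three $C_2$'s over a path, and it again contains $F_2$, for instance $\langle x_vx_u,\,x_vx_w\rangle$ after a retraction check.
\end{itemize}

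\emph{Main obstacle.} The delicate step is justifying that subgroups generated by syllables from distinct vertex groups are the expected graph products. I would handle this by retracting onto $G_{\{u,v,w\}}$, which is itself a graph product of three groups, and noting that $\langle x_u,x_v,x_w\rangle$ is then the graph product of $C_2$'s over the induced graph on $\{u,v,w\}$. That is a right-angled Coxeter group over an irreducible graph with three vertices, and such a group is not virtually cyclic.
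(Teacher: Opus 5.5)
Your proof of the equivalence is the paper's: Proposition~\ref{prop:vertex_full_are_pf} with Theorem~\ref{thm:pf_ah} in one direction, and Lemmas~\ref{lem:ah-finite-centralizer} and~\ref{lem:vertex_trans_center} with Proposition~\ref{prop:irreducible-fsb} in the other.

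The virtually cyclic analysis is where you diverge, and two of the witnesses you name there do not work as stated.

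\emph{The $xx'$ witness.} After your first step, every non-trivial element of $H\cap G_v$ is an involution. Since $xx'$ is such an element, $\langle xx'\rangle*\langle x_w\rangle\cong C_2*C_2$ is the infinite dihedral group. That group is virtually cyclic, so it gives no contradiction. The subgroup you want is $\langle x,x'\rangle*\langle x_w\rangle$. Its first factor has order at least $3$ (indeed it is $C_2\times C_2$), so the free-product criterion you already stated applies.

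\emph{The $F_2$ witness.} $\langle x_vx_u,x_vx_w\rangle$ is not a free group: it contains $(x_vx_u)^{-1}(x_vx_w)=x_ux_w$, which is a non-trivial involution (the subgroup is $\Z*C_2$). Your conclusion still holds, because $\langle x_v\rangle*(\langle x_u\rangle\times\langle x_w\rangle)\cong C_2*(C_2\times C_2)$ has a factor of order $4$. This is essentially what your last paragraph says.

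\emph{Choice of the third vertex.} An arbitrary third vertex need not be a non-neighbour of $v$ or $w$. Instead, choose $u$ to be a $\Gamma^{\mathrm c}$-neighbour of $\{v,w\}$; one exists because $\Gamma^{\mathrm c}$ is connected with at least three vertices.

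With these repairs your route is valid. The paper's route is shorter because it never computes orders before ruling out a third vertex.
\begin{itemize}
\item If $|V(\Gamma)|\geq3$, the paper takes a path $p,q,r$ in $\Gamma^{\mathrm c}$. Then $H$ contains $\bigl((H\cap G_p)\times(H\cap G_r)\bigr)*(H\cap G_q)$ or $(H\cap G_p)*(H\cap G_q)*(H\cap G_r)$. Both contain $F_2$ whatever the vertex intersections are.
\item If $|V(\Gamma)|=2$, the paper applies the free-product criterion once to $(H\cap G_{v_1})*(H\cap G_{v_2})$. This yields $H\cap G_v\cong C_2$ directly and makes your first bullet unnecessary.
\end{itemize}
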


\begin{proof}
    First, suppose $H$ is acylindrically hyperbolic or virtually cyclic. By Lemma \ref{lem:ah-finite-centralizer}, there is a finite set $F \subset H$ such that $C_H(F) = Z(H)$. Thus, Proposition \ref{prop:irreducible-fsb} implies $\Gamma$ has a finite star base.

    Conversely, if $H$ is vertex-full, then $H$ is parabolically full by Proposition \ref{prop:vertex_full_are_pf}. Thus, if $\Gamma$ has a finite star base, then Theorem \ref{thm:pf_ah} implies $H$ is virtually cyclic or acylindrically hyperbolic.

It remains to discuss the virtually cyclic case. If
$|V(\Gamma)| \geq 3$, the connected graph $\Gamma^{\mathrm c}$ contains a
path $p,q,r$ on three distinct vertices.  Thus $G_q$ commutes with neither
$G_p$ nor $G_r$.  If $p$ and $r$ are adjacent in $\Gamma$, then $H$ contains a subgroup
\[
 \left((H \cap G_p)\times (H \cap G_r) \right)*(H\cap G_q);
\]
otherwise $H$ contains a subgroup
\[
 (H \cap G_p)*(H \cap G_q)*(H \cap G_r).
\]
In either case, $H$ contains a non-abelian free subgroup, so is not virtually cyclic.

Suppose instead $|V(\Gamma)| = 2$, and let $V(\Gamma) = \{v_1,v_2\}$. Then $H$ contains a subgroup isomorphic to $(H \cap G_{v_1}) * (H \cap G_{v_2})$, which contains non-abelian free groups unless $H \cap G_v \cong C_2$ for all $v \in V(\Gamma)$.
\end{proof}

By taking $H = G$, we reach a characterization of the graph products of groups which are acylindrically hyperbolic.

\begin{theorem}\label{thm:irreducible-ah}
Let $G=\Gamma\cG$ be a graph product of non-trivial groups over an
irreducible graph $\Gamma$ with at least two vertices.  Then $G$ is
acylindrically hyperbolic if and only if $\Gamma$ has a finite star base and
one of the following holds:
\begin{enumerate}[(a)]
\item $|V(\Gamma)|\geq3$; or
\item $|V(\Gamma)|=2$ and at least one vertex group is not isomorphic to
$C_2$.
\end{enumerate}
\end{theorem}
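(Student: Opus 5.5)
The plan is to specialize Theorem~\ref{thm:ah-vertex_trans} to $H=G$. Since every vertex group is non-trivial, $G\cap G_v=G_v\neq 1$ for each $v$, so $G$ is a vertex-full subgroup of itself. Theorem~\ref{thm:ah-vertex_trans} then says that $G$ is virtually cyclic or acylindrically hyperbolic if and only if $\Gamma$ has a finite star base. It also says that the virtually cyclic case can occur only when $|V(\Gamma)|=2$ and both vertex groups are $C_2$. What remains is to separate the acylindrically hyperbolic alternative from the virtually cyclic one.

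For the forward direction, suppose $G$ is acylindrically hyperbolic. Then $G$ lies in the class covered by Theorem~\ref{thm:ah-vertex_trans}, so $\Gamma$ has a finite star base. If (a) fails, then $|V(\Gamma)|=2$. If (b) also failed, both vertex groups would be $C_2$. The two vertices are non-adjacent, because $\Gamma$ is irreducible, so $G\cong C_2*C_2$, which is the infinite dihedral group and hence virtually cyclic. Acylindrically hyperbolic groups are by definition not virtually cyclic, since a non-elementary action forces $G$ to contain non-abelian free subgroups. This contradiction shows that (a) or (b) holds.

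For the converse, assume $\Gamma$ has a finite star base and that (a) or (b) holds. By Theorem~\ref{thm:ah-vertex_trans}, $G$ is virtually cyclic or acylindrically hyperbolic. If $G$ were virtually cyclic, the last clause of that theorem would force $|V(\Gamma)|=2$ and $G_v\cong C_2$ for both vertices, contradicting both (a) and (b). So $G$ is acylindrically hyperbolic.

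There is essentially no obstacle beyond bookkeeping. The only point requiring care is the standard fact that acylindrically hyperbolic groups are not virtually cyclic, together with the identification $C_2*C_2\cong D_\infty$ in the excluded case.
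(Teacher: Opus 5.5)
Your proposal is correct and matches the paper's proof essentially step for step. You take $H=G$ in Theorem~\ref{thm:ah-vertex_trans}. In the forward direction you exclude the case where (a) and (b) both fail by noting $G\cong C_2*C_2\cong D_\infty$, and in the converse you use the last clause of that theorem to rule out the virtually cyclic alternative.
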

\begin{proof}
    If $G$ is acylindrically hyperbolic, then taking $H=G$ in Theorem~\ref{thm:ah-vertex_trans} implies that $\Gamma$ has a finite star base. Furthermore, if both conditions (a) and (b) fail, then $G \cong D_\infty$, contradicting that $G$ is acylindrically hyperbolic. 

    Conversely, suppose $\Gamma$ has a finite star base and one of conditions (a) or (b) holds. Since $\Gamma$ has a finite star base, we see $G$ is either acylindrically hyperbolic or virtually cyclic by Theorem~\ref{thm:ah-vertex_trans}. However, if $G$ is virtually cyclic, then Theorem~\ref{thm:ah-vertex_trans} implies $|V(\Gamma)| = 2$ and $G \cap G_v = G_v \cong C_2$ for both vertices $v \in V(\Gamma)$, contradicting either condition (a) or (b). Therefore, $G$ is acylindrically hyperbolic.
\end{proof}

We now pass from irreducible graph products to arbitrary ones.

\begin{corollary}\label{cor:arbitrary-ah}
Let $G=\Gamma\cG$, and let $(\Gamma_i)_{i\in I}$ be the irreducible
components of $\Gamma$.  The following are equivalent.
\begin{enumerate}[(1)]
\item $G$ is acylindrically hyperbolic.
\item The set $I$ is finite, there is a unique $i_0\in I$ for which
$G_{V(\Gamma_{i_0})}$ is infinite, this factor is acylindrically hyperbolic,
and every other factor is finite.
\item The set $I$ is finite, and there is a unique $i_0\in I$ such that
either $\Gamma_{i_0}$ consists of a single vertex carrying an
acylindrically hyperbolic group, or $\Gamma_{i_0}$ has at least two
vertices and satisfies the criterion of
Theorem~\ref{thm:irreducible-ah}; and every other component consists of a
single vertex carrying a finite group.
\end{enumerate}
\end{corollary}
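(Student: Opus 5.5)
The plan is to reduce everything to known facts about acylindrically hyperbolic groups and direct products, using the decomposition $G=\bigoplus_{i\in I}G_{V(\Gamma_i)}$ of Fact~\ref{eq:irreducible-decomposition}. Two standard inputs will be used.
\begin{enumerate}[(i)]
\item An acylindrically hyperbolic group is not a direct product of two infinite groups. This follows from Osin's results: in such a product, every element centralizes an infinite subgroup, so no element can be a loxodromic WPD element.
\item Acylindrical hyperbolicity passes to and from finite-index overgroups and quotients by finite normal subgroups. In particular, $K\times F$ with $F$ finite is acylindrically hyperbolic if and only if $K$ is.
\end{enumerate}

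For (1)$\Rightarrow$(2), assume $G$ is acylindrically hyperbolic. First, $I$ is finite. If $I$ were infinite, split $I=I_1\sqcup I_2$ with both parts infinite; then $G=G_{I_1}\times G_{I_2}$ with both factors infinite, since each factor is a direct sum of infinitely many non-trivial groups. This contradicts (i). Next, (i) shows that at most one factor $G_{V(\Gamma_i)}$ is infinite, by grouping one infinite factor against the rest. Since $G$ itself is infinite, exactly one factor, $G_{V(\Gamma_{i_0})}$, is infinite. Then $G=G_{V(\Gamma_{i_0})}\times F$ with $F$ finite, and (ii) shows $G_{V(\Gamma_{i_0})}$ is acylindrically hyperbolic. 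The converse (2)$\Rightarrow$(1) is immediate from (ii).

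For (2)$\Leftrightarrow$(3), observe first that a finite factor $G_{V(\Gamma_i)}$ must come from a single vertex carrying a finite group. Indeed, an irreducible component with at least two vertices has two non-adjacent vertices $p,q$, so $G_p*G_q\leq G_{V(\Gamma_i)}$ is infinite. Conversely, a single finite vertex group gives a finite factor, and a single infinite vertex group gives an infinite one. For the distinguished component $\Gamma_{i_0}$, there are two cases. If it is a single vertex, the condition in (2) says exactly that its vertex group is acylindrically hyperbolic. If it has at least two vertices, the factor is automatically infinite, and Theorem~\ref{thm:irreducible-ah}, applied to the graph product over $\Gamma_{i_0}$, characterizes its acylindrical hyperbolicity. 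Uniqueness of $i_0$ in (3) then matches uniqueness in (2), because every other component is a single vertex with a finite group, hence contributes a finite factor.

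The main obstacle is justifying (i) and (ii) cleanly. For (i), I plan to cite Osin \cite{Osin2016} (Corollary~7.3 there concerns direct products with infinite factors), or argue directly. If $g=(a,b)$ is loxodromic WPD in $K_1\times K_2$, then $C(g)$ contains either $K_2\times\langle a\rangle$ or $\langle g\rangle\cdot(1\times C_{K_2}(b))$. Both are infinite subgroups that are not virtually cyclic in the relevant cases, contradicting \cite[Corollary~6.9]{Osin2016}. This requires some care, so citing Osin is preferable. For (ii), acylindrical hyperbolicity is inherited by finite-index subgroups and preserved under quotients by finite normal subgroups, and $K\times F$ contains $K$ with finite index.
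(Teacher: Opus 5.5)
Your proposal is correct and follows the paper's proof essentially step for step. The shared ingredients are the decomposition $G=\bigoplus_{i\in I}G_{V(\Gamma_i)}$, Osin's obstruction to splitting as a product of two infinite groups (giving $I$ finite and a unique infinite factor), the passage of acylindrical hyperbolicity between $K$ and $K\times F$ for finite $F$, and the free-product observation together with Theorem~\ref{thm:irreducible-ah} for (2)$\Leftrightarrow$(3). For the $K\times F$ step, the paper uses Osin's Corollary~1.5 for the infinite normal factor, and for the converse pulls the action back through the projection with finite kernel.

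Do rely on the citation for (i) and not on your sketch, because the sketch is wrong. The heuristic that every element centralizes an infinite subgroup rules out nothing, since loxodromic elements always do. The containment $\langle a\rangle\times K_2\leq C(g)$ also fails unless $b$ is central in $K_2$. For example, in $D_\infty\times D_\infty$ the element $g=(s,st)$ has centralizer $\{1,s\}\times\langle st\rangle$, which is virtually cyclic, and both of your candidate subgroups lie in it. A direct proof must instead use that each infinite normal factor contains an element that is loxodromic WPD for $G$, whose virtually cyclic centralizer then contains the other factor.
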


\begin{proof}
Write $G_i=G_{V(\Gamma_i)}$.  By~\eqref{eq:irreducible-decomposition},
$G=\bigoplus_{i\in I}G_i$.

Assume that $G$ is acylindrically hyperbolic.  Such a group cannot split as a
direct product of two infinite groups
\cite[Corollary~7.2(b)]{Osin2016}.  If $I$ were infinite, partition $I$ into
two non-empty subsets so that each corresponding direct sum is infinite;
this is possible because every $G_i$ is non-trivial, and yields a
contradiction.  Hence $I$ is finite.  The same direct-product obstruction
shows that at most one factor is infinite.  At least one factor is infinite
because an acylindrically hyperbolic group is infinite.  Let it be $G_{i_0}$. Then $G_{i_0}$ is an infinite normal subgroup of $G$, hence is acylindrically hyperbolic by \cite[Corollary 1.5]{Osin2016}. This proves
(2).

If an irreducible component has at least two vertices, it contains two
nonadjacent vertices, and the product of non-trivial elements from the
corresponding vertex groups has infinite order by the normal-form theorem.
Thus a finite irreducible factor consists of a single vertex with a finite
vertex group.  Combining this observation with
Theorem~\ref{thm:irreducible-ah} proves the equivalence of (2) and (3).

Finally, if (2) holds, write $G=G_{i_0}\times F$ with $F$ finite.  Pulling back a
non-elementary acylindrical action of $G_{i_0}$ on a hyperbolic space through the projection
$G\to G_{i_0}$ produces a non-elementary acylindrical action of $G$ on a hyperbolic space, since
the kernel $F$ is finite.  Thus (2) implies~(1).
\end{proof}

\section{Special automorphisms of graph products of groups}\label{sec:special}

\subsection{Definitions and finite graph products}

\begin{definition}\label{def:special-groups}
Let $G=\Gamma\cG$.  We define the following subgroups of $\Aut(G)$.
\begin{enumerate}[(1)]
\item $\ASpe(G)=\bigl\{\sigma\in\Aut(G):
      \forall v\ \exists p_v\in G,\
      \sigma(G_v)=p_vG_vp_v^{-1}\bigr\}$;
\item $\ISpe(G)=\bigl\{\sigma\in\Aut(G):
      \forall v\ \exists p_v\in G,\
      \sigma|_{G_v}=\ad(p_v)|_{G_v}\bigr\}$;
\item $\FAut(G)=\bigl\{\sigma\in\Aut(G):
      \sigma(G_v)=G_v\text{ for every }v\bigr\}$;
\item $\FInn(G)=\bigl\{\sigma\in\FAut(G):
      \sigma|_{G_v}\in\Inn(G_v)\text{ for every }v\bigr\}$.
\end{enumerate}
Finally, $\FAutfin(G)$ and
$\FInnfin(G)$ denote the subgroups of $\FAut(G)$ and $\FInn(G)$,
respectively, whose automorphisms are the identity at all but finitely many
vertices.
\end{definition}

The four groups are summarized in Table~\ref{tab:special-groups}; in the first
two rows, the element $p_v$ may depend on $v$.
\begin{table}[ht]
\centering
\small
\renewcommand{\arraystretch}{1.25}
\begin{tabular}{c|p{0.28\textwidth}|p{0.42\textwidth}}
 & Image of $G_v$ & Action after transporting back to $G_v$ \\
\hline
$\ASpe(G)$ & $p_vG_vp_v^{-1}$ & an arbitrary automorphism of $G_v$ \\
$\ISpe(G)$ & $p_vG_vp_v^{-1}$ & an inner automorphism of $G_v$ \\
$\FAut(G)$ & $G_v$ & an arbitrary automorphism of $G_v$ \\
$\FInn(G)$ & $G_v$ & an inner automorphism of $G_v$
\end{tabular}
\vspace{0.75em}
\caption{The four automorphism groups from Definition~\ref{def:special-groups}.}
\label{tab:special-groups}
\end{table}

The letter F in $\FAut$ and $\FInn$ stands for \emph{factorwise}.
These are the cases in which no vertex group is moved: elements of
$\FAut(G)$ and $\FInn(G)$ preserve every $G_v$ setwise, and elements of
$\FInn(G)$ induce inner automorphisms on all individual factors.  Thus the following inclusions hold:
\begin{enumerate}[(i)]
\item $\FInn(G)\leq \ISpe(G)\leq \ASpe(G)$;
\item $\FInn(G)\leq \FAut(G)\leq \ASpe(G)$.
\end{enumerate}

The groups $\FAut(G)$ and $\ISpe(G)$ are generally incomparable.  A
factorwise automorphism whose restriction to some vertex group is outer lies
in $\FAut(G)$ but not in $\ISpe(G)$, whereas an inner automorphism lies in
$\ISpe(G)$ but need not preserve the vertex groups setwise.

\begin{observation}
Every family $(\alpha_v)_{v\in V(\Gamma)}$ with
$\alpha_v\in\Aut(G_v)$ extends uniquely to an element of $\FAut(G)$, because
it preserves all the defining commutation relations.  Consequently, we have
\begin{equation}\label{eq:paut-products}
 \FAut(G)\cong\prod_{v\in V(\Gamma)}\Aut(G_v),
 \qquad
 \FInn(G)\cong\prod_{v\in V(\Gamma)}\Inn(G_v)
\end{equation}
as abstract groups.  Moreover,
\begin{equation}\label{eq:pinn-intersection}
 \FInn(G)=\FAut(G)\cap\ISpe(G).
\end{equation}
Indeed, let $\sigma\in\FAut(G)\cap\ISpe(G)$ and fix $v\in V(\Gamma)$.  Since
$\sigma\in\ISpe(G)$, there exists $p_v\in G$ such that
$\sigma|_{G_v}=\ad(p_v)|_{G_v}$.  Since $\sigma\in\FAut(G)$, we also have
$\sigma(G_v)=G_v$, and therefore $p_v\in N_G(G_v)=G_{N^*(v)}
=G_v\times G_{N(v)}$.  Write $p_v=a_vb_v$, with $a_v\in G_v$ and
$b_v\in G_{N(v)}$.  As $b_v$ centralizes $G_v$, it follows that
$\sigma|_{G_v}=\ad(a_v)|_{G_v}\in\Inn(G_v)$.  This holds for every
$v\in V(\Gamma)$, so $\sigma\in\FInn(G)$.  The reverse inclusion is
immediate, and hence $\FInn(G)=\FAut(G)\cap\ISpe(G)$.
\end{observation}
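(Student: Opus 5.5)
The plan is to prove the observation in three steps: the extension property, the two abstract isomorphisms, and the intersection formula \eqref{eq:pinn-intersection}.

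For the extension statement I use the universal property of the presentation in Definition~\ref{def:graph-product}. Given $(\alpha_v)_{v\in V(\Gamma)}$ with $\alpha_v\in\Aut(G_v)$, the free product gives a unique homomorphism $*_vG_v\to G$ that restricts to $\alpha_v$ on each $G_v$. If $vR_\Gamma w$, $g\in G_v$ and $h\in G_w$, then $[g,h]$ goes to $[\alpha_v(g),\alpha_w(h)]$, which is trivial because $\alpha_v(g)\in G_v$ and $\alpha_w(h)\in G_w$. So the map descends to an endomorphism $\alpha$ of $G$. Applying the same construction to $(\alpha_v^{-1})$ gives an endomorphism that is inverse to $\alpha$ on every generator, hence everywhere, so $\alpha\in\FAut(G)$. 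Uniqueness holds because the vertex groups generate $G$.

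Next consider the restriction map $\FAut(G)\to\prod_v\Aut(G_v)$, $\sigma\mapsto(\sigma|_{G_v})_v$. It is a homomorphism because composition is computed vertexwise. It is injective because the $G_v$ generate $G$, and surjective by the extension statement. This gives the first isomorphism in \eqref{eq:paut-products}. By Definition~\ref{def:special-groups}(4), $\FInn(G)$ is exactly the preimage of $\prod_v\Inn(G_v)$, which gives the second isomorphism.

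For \eqref{eq:pinn-intersection}, the inclusion $\FInn(G)\subseteq\FAut(G)\cap\ISpe(G)$ is immediate. If $\sigma|_{G_v}=\ad(a_v)|_{G_v}$ with $a_v\in G_v$, take $p_v=a_v$.

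For the reverse inclusion, fix $v$ and take $p_v$ with $\sigma|_{G_v}=\ad(p_v)|_{G_v}$. Since $\sigma(G_v)=G_v$, we have $p_v\in N_G(G_v)=G_{N^*(v)}=G_v\times G_{N(v)}$ by Fact~\ref{fact:parabolic-basics}(2). Write $p_v=a_vb_v$ with $a_v\in G_v$ and $b_v\in G_{N(v)}$. By the definition of the graph product, $b_v$ centralizes $G_v$, so $\sigma|_{G_v}=\ad(a_v)|_{G_v}\in\Inn(G_v)$.

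The only non-formal input is the normalizer computation in Fact~\ref{fact:parabolic-basics}(2), which we are allowed to assume. This makes the splitting of $p_v$ the step that needs care: one has to remember that $G_{N^*(v)}$ is a genuine direct product $G_v\times G_{N(v)}$, so that the $G_{N(v)}$-component can be discarded. Everything else reduces to universal properties and the fact that the vertex groups generate $G$.
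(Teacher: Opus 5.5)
Your proposal is correct and follows the paper's argument. The intersection formula is proved exactly as in the paper, via $p_v\in N_G(G_v)=G_{N^*(v)}=G_v\times G_{N(v)}$ and discarding the factor $b_v$ that centralizes $G_v$, while your universal-property argument for the extension (descend through the commutation relations, then invert using $(\alpha_v^{-1})_v$) simply spells out the paper's one-line justification; note that the splitting $p_v=a_vb_v$ uses only that $G_v$ and $G_{N(v)}$ commute and generate $G_{N^*(v)}$, not uniqueness of the direct-product decomposition.
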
 

The next theorem follows from a result of Genevois--Martin \cite{GenevoisMartin2019}.

\begin{theorem}\label{thm:finite-decomposition}
If $\Gamma$ is finite, then, with $\PartConj(G)$ as in
Definition~\ref{def:partial-conjugation}, we have
\[
 \ASpe(G)=\FAut(G)\PartConj(G),
\text{ and }
 \ISpe(G)=\FInn(G)\PartConj(G).
\]
\end{theorem}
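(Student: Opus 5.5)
The plan is to deduce both equalities from the Genevois--Martin theorem for finite graphs: conjugating automorphisms are generated by local automorphisms and partial conjugations \cite{GenevoisMartin2019}. There, a \emph{conjugating} automorphism is one sending each vertex group to a conjugate of itself. This is exactly $\ASpe(G)$; no graph automorphism can appear, since each $G_v$ must go to a conjugate of the same $G_v$, not of another vertex group (Lemma~\ref{lem:vertex-conjugates}). A local automorphism preserves every vertex group setwise and restricts to an automorphism of each factor, so it lies in $\FAut(G)$. Partial conjugations in their sense are our $\pi(u,s,C)$ with $C$ a union of components of $\Gamma\setminus N^*(u)$. Hence $\ASpe(G)$ is generated by $\FAut(G)\cup\PartConj(G)$, and the inclusions $\FAut(G)\PartConj(G)\subseteq\ASpe(G)$ are clear.

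To turn the generated subgroup into the product set $\FAut(G)\PartConj(G)$, I will show that $\FAut(G)$ normalizes $\PartConj(G)$. For $\alpha\in\FAut(G)$ with components $\alpha_v\in\Aut(G_v)$, I will verify on each vertex group that
\[
\alpha\circ\pi(u,s,C)\circ\alpha^{-1}=\pi(u,\alpha_u(s),C).
\]
For $g\in G_v$ with $v\in C$, we have $\alpha(s\,\alpha^{-1}(g)\,s^{-1})=\alpha_u(s)\,g\,\alpha_u(s)^{-1}$. For $v\notin C$, both sides fix $g$. Hence $\FAut(G)\PartConj(G)$ is a subgroup, and therefore equals $\ASpe(G)$.

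For $\ISpe(G)$, the inclusion $\FInn(G)\PartConj(G)\subseteq\ISpe(G)$ holds because partial conjugations act on each $G_v$ by conjugation by a fixed element, either $s$ or $1$. The same is true for elements of $\FInn(G)$, and $\ISpe(G)$ is a group. The normalization computation above also shows that $\FInn(G)$ normalizes $\PartConj(G)$, so $\FInn(G)\PartConj(G)$ is a subgroup.

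For the reverse inclusion, take $\sigma\in\ISpe(G)$ and write $\sigma=\alpha\pi$ with $\alpha\in\FAut(G)$ and $\pi\in\PartConj(G)$. Then $\alpha=\sigma\pi^{-1}\in\ISpe(G)\cap\FAut(G)$, and by \eqref{eq:pinn-intersection} this intersection is $\FInn(G)$. So $\sigma\in\FInn(G)\PartConj(G)$.

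The main obstacle is matching the Genevois--Martin statement to this setting. One must confirm that their ``local automorphisms'' are exactly factorwise automorphisms, rather than automorphisms of a star $G_{N^*(v)}$ fixing the rest. If the latter, I would reduce to the former by noting that a conjugating local automorphism supported on $G_{N^*(v)}$ sends each $G_w$ to a conjugate inside the star. It therefore factors as a factorwise automorphism times partial conjugations by elements of $G_{N^*(v)}$; these are handled via Lemma~\ref{lem:parabolic-factorization} and by splitting conjugators syllable by syllable.

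Everything else is the routine check above.
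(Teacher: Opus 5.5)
Your normalization identity $\alpha\,\pi(u,s,C)\,\alpha^{-1}=\pi(u,\alpha_u(s),C)$ is correct. Your passage from $\ASpe(G)$ to $\ISpe(G)$ via $\FAut(G)\cap\ISpe(G)=\FInn(G)$ is also correct, and it is exactly what the paper does.

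The gap is in your first paragraph. In Genevois--Martin, a conjugating automorphism sends each vertex group to a conjugate of \emph{some} vertex group. The generating set for that larger group therefore also contains graph automorphisms: automorphisms induced by an isometry $s$ of $\Gamma$ together with isomorphisms $\varphi_u\colon G_u\to G_{s(u)}$. This is why the paper's proof has to dispose of an isometry $s$. Your remark that each $G_v$ must go to a conjugate of the same $G_v$ only shows that a graph automorphism with $s\neq\mathrm{id}$ does not itself lie in $\ASpe(G)$. It does not show that the subgroup $\ASpe(G)$ is generated by the remaining generators, because a product involving graph automorphisms can land in $\ASpe(G)$. So your claim that $\ASpe(G)$ is generated by $\FAut(G)\cup\PartConj(G)$ is not yet justified.

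The repair uses your own normalization idea. For a graph automorphism $\gamma=(s,(\varphi_u)_u)$, one checks on vertex groups that
$\gamma\,\pi(u,h,C)\,\gamma^{-1}=\pi\bigl(s(u),\varphi_u(h),s(C)\bigr)$ and $\gamma\FAut(G)\gamma^{-1}=\FAut(G)$.
\begin{itemize}
\item Hence the group $\Sigma$ of graph automorphisms normalizes $\FAut(G)\PartConj(G)$.
\item So $\FAut(G)\PartConj(G)\,\Sigma$ is a subgroup containing all the Genevois--Martin generators.
\item Thus every $\sigma\in\ASpe(G)$ can be written $\sigma=\beta\gamma$ with $\beta\in\FAut(G)\PartConj(G)$ and $\gamma\in\Sigma$.
\item Then $\gamma=\beta^{-1}\sigma\in\ASpe(G)$, so $\gamma(G_v)=G_{s(v)}$ is conjugate to $G_v$.
\item Lemma~\ref{lem:vertex-conjugates} now gives $s=\mathrm{id}$, that is, $\gamma\in\FAut(G)$.
\end{itemize}

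With this step added, your argument is complete, and it differs from the paper's. The paper extracts, from the proofs of Genevois--Martin's Lemma~3.12 and Theorem~3.11, a product $\alpha$ of partial conjugations with $\sigma\alpha(G_v)=G_{s(v)}$. It kills $s$ with the same lemma and reads off $\sigma\alpha\in\FAut(G)$ directly, so it needs no normalization. Your version uses only the published generating set plus elementary conjugation identities, at the cost of the extra normalization step.

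Your ``main obstacle'' is a non-issue: their local automorphisms act on one vertex group and trivially on the others. The fallback you sketch for it would not count as a proof anyway. The point that actually needed care was the graph automorphisms.
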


\begin{proof}
Every partial conjugation and every factorwise inner automorphism belongs to $\ISpe(G)$, while every factorwise automorphism belongs to $\ASpe(G)$, so one inclusion in each equality is immediate.

Conversely,
let $\sigma\in\ASpe(G)$.  It is a conjugating automorphism in the terminology
of \cite{GenevoisMartin2019}.  By
the proofs of \cite[Lemma 3.12, Theorem 3.11]{GenevoisMartin2019}, there is a product $\alpha$ of
partial conjugations and a graph isometry $s$ such that
$\sigma\alpha(G_v)=G_{s(v)}$ for every vertex $v \in V(\Gamma)$. Since $\sigma$ and $\alpha$ both belong to $\ASpe(G)$, the groups $G_v$
and $G_{s(v)}$ are conjugate.  Lemma~\ref{lem:vertex-conjugates} therefore
gives $s(v)=v$ for every $v$, thus
$\sigma\alpha\in\FAut(G)$.  This proves
$\ASpe(G)=\FAut(G)\PartConj(G)$.

Now let $\sigma\in\ISpe(G)$.  Partial conjugations belong to $\ISpe(G)$, so
$\sigma\alpha\in\ISpe(G)\cap\FAut(G)=\FInn(G)$ by
\eqref{eq:pinn-intersection}.  This gives the second equality.
\end{proof}

\subsection{Topology of special automorphism groups}
In this section, we record useful facts about the topology of the subgroups from Definition \ref{def:special-groups}. These will be used throughout the paper and are important for understanding our topological generation results.

\begin{lemma}\label{lem:factorwise-product-topology}
Under the identifications~\eqref{eq:paut-products}, the subspace topologies
on $\FAut(G)$ and $\FInn(G)$ inherited from $\Aut(G)$ are precisely the
product topologies.
\end{lemma}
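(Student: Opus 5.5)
The plan is to compare basic neighborhoods of the identity in the two topologies directly. We treat only $\FAut(G)$; the case of $\FInn(G)$ then follows by restriction. Under \eqref{eq:paut-products}, $\FInn(G)$ corresponds to the subgroup $\prod_v\Inn(G_v)$ of $\prod_v\Aut(G_v)$. The subspace topology of a subspace of a product is the product of the subspace topologies. So once the identification for $\FAut(G)$ is a homeomorphism, the statement for $\FInn(G)$ is automatic, with each factor $\Inn(G_v)$ carrying its topology from $\Aut(G_v)$. Since both topologies are group topologies (the product of topological groups is a topological group, and $\Aut(G)$ is one), it suffices to compare neighborhood bases at the identity.

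First, every product-basic neighborhood is open in the subspace topology. A basic neighborhood of the identity in $\prod_v\Aut(G_v)$ is determined by finitely many vertices $v_1,\dots,v_n$ and finite sets $F_i\subseteq G_{v_i}$. It consists of the families with $\alpha_{v_i}$ fixing $F_i$ pointwise. Let $\sigma\in\FAut(G)$ correspond to $(\alpha_v)_v$. Then $\sigma|_{G_v}=\alpha_v$ for every $v$. Hence this neighborhood is exactly $\FAut(G)\cap\PStab(F_1\cup\dots\cup F_n)$, which is open in the subspace topology.

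Conversely, every subspace-basic neighborhood contains a product-basic one. Let $F\subseteq G$ be finite and consider $\FAut(G)\cap\PStab(F)$. For each $g\in F$ fix a reduced word $g=s_1\cdots s_m$ with $s_j\in G_{v_j}$. Let $D=\supp(F)$, which is finite. For $v\in D$, let $F_v\subseteq G_v$ be the finite set of all syllables from $G_v$ occurring in the chosen words. Take $\sigma$ with $\alpha_v$ fixing $F_v$ pointwise for every $v\in D$. Since $\sigma$ is a homomorphism, $\sigma(g)=\sigma(s_1)\cdots\sigma(s_m)=s_1\cdots s_m=g$. Thus the product neighborhood determined by $(F_v)_{v\in D}$ lies inside $\FAut(G)\cap\PStab(F)$.

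These two inclusions show that the identification is a homeomorphism at the identity, hence everywhere by translation. No step is a real obstacle. The only point needing care is the second inclusion: there one must use that each $g\in F$ is a finite product of syllables, so only finitely many vertices and finitely many syllables are involved. This is exactly what makes pointwise convergence on $G$ coincide with factorwise pointwise convergence.
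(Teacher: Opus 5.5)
Your proof is correct and follows essentially the same route as the paper. Restriction to each $G_v$ gives one direction, and writing each element of a finite set as a finite product of syllables gives the other. The paper phrases this as continuity of the coordinatewise extension map and its inverse via evaluation maps, rather than as a comparison of neighborhood bases at the identity followed by translation, but the content is the same.
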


\begin{proof}
Let $\Phi\colon\prod_{v\in V(\Gamma)}\Aut(G_v)\longrightarrow\FAut(G)$
be the coordinatewise extension map.  This is an algebraic isomorphism by
\eqref{eq:paut-products}.

For every $v$, the restriction map
$\FAut(G)\to\Aut(G_v)$ is continuous in the pointwise-convergence
topologies.  Hence the inverse of $\Phi$, whose coordinates are precisely
these restriction maps, is continuous for the product topology.

Conversely, fix $x\in G$ and choose a reduced expression
$x=s_1\cdots s_n$, with $s_i\in G_{v_i}$.  For
$(\alpha_v)_v\in\prod_v\Aut(G_v)$, one has
$\Phi((\alpha_v)_v)(x)=\alpha_{v_1}(s_1)\cdots\alpha_{v_n}(s_n)$.
Thus evaluation at $x$ after applying $\Phi$ depends continuously on only
finitely many coordinates.  Since the pointwise-convergence topology on
$\FAut(G)$ is generated by evaluations at elements of $G$, the map $\Phi$
is continuous.  Therefore $\Phi$ is a homeomorphism.  Restricting it to
$\prod_v\Inn(G_v)$ gives the assertion for $\FInn(G)$.
\end{proof}

\begin{proposition}\label{prop:paut-closed}
The subgroup $\FAut(G)$ is closed in $\Aut(G)$.
\end{proposition}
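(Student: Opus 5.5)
We show that the complement of $\FAut(G)$ is open, or equivalently that a limit of factorwise automorphisms is again factorwise. Since $\Aut(G)$ carries the subspace topology of $G^G$ and $G$ is discrete, a net $\sigma_i\to\sigma$ converges exactly when, for each $x\in G$, eventually $\sigma_i(x)=\sigma(x)$.

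Let $\sigma\in\cl{\FAut(G)}$, where the closure is taken in $\Aut(G)$, and fix $v\in V(\Gamma)$. We prove $\sigma(G_v)=G_v$ in two steps.

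For the inclusion $\sigma(G_v)\subseteq G_v$, take $x\in G_v$. The basic neighbourhood $\{\tau\in\Aut(G):\tau(x)=\sigma(x)\}$ of $\sigma$ contains some $\tau\in\FAut(G)$. Then $\sigma(x)=\tau(x)\in G_v$.

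For the reverse inclusion $G_v\subseteq\sigma(G_v)$, we use inverses. Inversion is continuous on $\Aut(G)$ (as recorded in the preliminaries), and $\FAut(G)$ is a subgroup. Hence $\sigma^{-1}$ also lies in $\cl{\FAut(G)}$, and the first step gives $\sigma^{-1}(G_v)\subseteq G_v$. Applying $\sigma$ yields $G_v\subseteq\sigma(G_v)$.

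To avoid relying on continuity of inversion, one can argue directly. Given $y\in G_v$, set $x=\sigma^{-1}(y)$. Pick $\tau\in\FAut(G)$ with $\tau(x)=\sigma(x)=y$. Since $\tau$ is bijective and preserves $G_v$ setwise, $\tau^{-1}(G_v)=G_v$, so $x=\tau^{-1}(y)\in G_v$.

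Combining the two inclusions gives $\sigma(G_v)=G_v$ for every $v$. Therefore $\sigma\in\FAut(G)$, and $\FAut(G)$ is closed.

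The only point needing care is the surjectivity onto $G_v$, since pointwise limits of injections need not map a subgroup onto itself. The argument above handles this using bijectivity of $\tau$, so no real obstacle remains.
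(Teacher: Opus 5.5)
Your proof is correct and takes essentially the same approach as the paper. The paper works with a convergent net $(\tau_i)$ instead of basic neighbourhoods of a closure point, gets $\tau(G_v)\leq G_v$ from eventual equality $\tau_i(x)=\tau(x)$, and gets the reverse inclusion by applying the same step to the inverse net $\tau_i^{-1}\to\tau^{-1}$; your direct alternative for that inclusion simply unpacks this continuity of inversion.
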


\begin{proof}
Recall that a \emph{net} in a topological space $X$ is a family
$(x_i)_{i\in I}$ indexed by a directed set $(I,\leq)$.  It converges to
$x\in X$ if, for every neighborhood $U$ of $x$, there is $i_0\in I$ such
that $x_i\in U$ whenever $i\geq i_0$.  Nets characterize closed subsets:
a subset of $X$ is closed if and only if it contains the limit of every
convergent net of its elements.  We use nets rather than sequences because,
for graph products of arbitrary cardinality, the topology on $\Aut(G)$ need
not be first countable.

Let $(\tau_i)_{i\in I}$ be a net in $\FAut(G)$ converging to some
$\tau\in\Aut(G)$.  Convergence in the pointwise-convergence topology means
that, for every $x\in G$, there is $i_x\in I$ such that
$\tau_i(x)=\tau(x)$ for every $i\geq i_x$.

Fix $v\in V(\Gamma)$ and $x\in G_v$.  Since every $\tau_i$ belongs to
$\FAut(G)$, it preserves $G_v$ setwise, and hence $\tau_i(x)\in G_v$ for
every $i$.  For all sufficiently large $i$ we have
$\tau_i(x)=\tau(x)$, so $\tau(x)\in G_v$.  As this holds for every
$x\in G_v$, we obtain $\tau(G_v)\leq G_v$.

Inversion is continuous in the topological group $\Aut(G)$, and therefore
the inverse net $(\tau_i^{-1})_{i\in I}$ converges to $\tau^{-1}$.  Moreover,
each $\tau_i^{-1}$ still belongs to $\FAut(G)$.  Applying the preceding
argument to this inverse net gives $\tau^{-1}(G_v)\leq G_v$.
Applying $\tau$ to this inclusion yields $G_v\leq\tau(G_v)$.  Consequently,
$\tau(G_v)=G_v$ for every $v\in V(\Gamma)$, and hence
$\tau\in\FAut(G)$.  Thus $\FAut(G)$ is closed in $\Aut(G)$.
\end{proof}

\begin{theorem}\label{thm:special-closedness}
Let $G=\Gamma\cG$.
\begin{enumerate}[(1)]
\item $\ASpe(G)$ is closed in $\Aut(G)$.
\item $\ISpe(G)$ is closed in $\ASpe(G)$, equivalently in $\Aut(G)$, if and only if $\Inn(G_v)$ is closed in $\Aut(G_v)$ for every $v\in V(\Gamma)$.
\end{enumerate}
\end{theorem}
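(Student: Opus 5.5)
Both parts will be proved with nets, following the argument of Proposition~\ref{prop:paut-closed}. Part (1) reduces to showing that the conjugate image $\sigma(G_v)$ persists in the limit. Part (2) reduces to a vertex-by-vertex comparison, obtained by transporting automorphisms back to $G_v$.

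For (1), let $(\sigma_i)$ be a net in $\ASpe(G)$ converging to $\sigma\in\Aut(G)$, and fix $v$. Choose $1\neq x\in G_v$. Eventually $\sigma(x)=\sigma_i(x)\in p_iG_vp_i^{-1}$, so $\sigma(x)$ lies in a conjugate $pG_vp^{-1}$ with $p=p_{i_0}$ for one fixed index $i_0$. For any other $y\in G_v$, eventually $\sigma(y)=\sigma_i(y)$ and $\sigma(x)=\sigma_i(x)$ both lie in $p_iG_vp_i^{-1}$. This conjugate meets $pG_vp^{-1}$ non-trivially (it contains $\sigma(x)\neq1$), so Lemma~\ref{lem:vertex-conjugates} gives $p_iG_vp_i^{-1}=pG_vp^{-1}$. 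Hence $\sigma(G_v)\leq pG_vp^{-1}$.

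Applying the same argument to the inverse net gives $\sigma^{-1}(G_v)\leq qG_vq^{-1}$. Therefore $G_v\leq\sigma(q)\sigma(G_v)\sigma(q)^{-1}\leq \sigma(q)pG_vp^{-1}\sigma(q)^{-1}$. Conjugating, $p^{-1}\sigma(q)^{-1}G_v\sigma(q)p\leq G_v$, and Fact~\ref{fact:parabolic-basics}(5) upgrades this to equality. Working back through the chain, every inclusion becomes an equality, so $\sigma(G_v)=pG_vp^{-1}$ and $\sigma\in\ASpe(G)$. The equivalence of closedness in $\ASpe(G)$ and in $\Aut(G)$ in (2) then follows from (1).

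For (2), the key device is a continuous ``transport'' map. For $\sigma\in\ASpe(G)$ the conjugate $\sigma(G_v)=p_vG_vp_v^{-1}$ is determined, but $p_v$ is determined only up to right multiplication by $N_G(G_v)=G_v\times G_{N(v)}$. The induced element of $\Aut(G_v)$, namely $\ad(p_v^{-1})\circ\sigma|_{G_v}$, is therefore well defined only modulo $\Inn(G_v)$. Arguing exactly as in \eqref{eq:pinn-intersection}, $\sigma\in\ISpe(G)$ if and only if this class is trivial for every $v$.

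For ($\Leftarrow$), take a net $\sigma_i\to\sigma$ in $\ASpe(G)$ with $\sigma_i\in\ISpe(G)$. By (1), for each $v$ the conjugate $\sigma_i(G_v)$ eventually equals $\sigma(G_v)=pG_vp^{-1}$; this is the same argument as before, using one nontrivial $x$. So eventually we may use the common conjugator $p$ for every $i$, and $\ad(p^{-1})\sigma_i|_{G_v}\to\ad(p^{-1})\sigma|_{G_v}$ pointwise on $G_v$. The left-hand automorphisms lie in $\Inn(G_v)$, and $\Inn(G_v)$ is closed, so the limit lies in $\Inn(G_v)$. Hence $\sigma\in\ISpe(G)$.

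For ($\Rightarrow$), suppose $\Inn(G_v)$ is not closed for some $v$, witnessed by a net $\ad(a_i)\to\beta\notin\Inn(G_v)$. Extend each $\ad(a_i)$ by the identity on the other vertices, giving elements of $\FInn(G)\leq\ISpe(G)$. By Lemma~\ref{lem:factorwise-product-topology} they converge to the extension $\tilde\beta\in\FAut(G)$. But $\tilde\beta\notin\ISpe(G)$ by \eqref{eq:pinn-intersection}, since $\tilde\beta\in\FAut(G)\setminus\FInn(G)$. So $\ISpe(G)$ is not closed.

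The main obstacle is the ($\Leftarrow$) direction. There one must make sure the conjugators $p_v$ can be chosen uniformly along the tail of the net, because otherwise the transported automorphisms are only defined modulo $\ad$ of elements of $G_{N(v)}$ and $G_v$. The plan is to fix the conjugator $p$ of the limit's image and rely on the stabilization of $\sigma_i(G_v)$ from (1), so that $\ad(p^{-1})\sigma_i|_{G_v}$ is an honest element of $\Aut(G_v)$ lying in $\Inn(G_v)$ (by the \eqref{eq:pinn-intersection}-type argument).
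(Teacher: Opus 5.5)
Your proof is correct and follows essentially the same route as the paper. Both arguments use nets, stabilize $\sigma_i(G_v)$ via Lemma~\ref{lem:vertex-conjugates}, transport back to $G_v$ with a fixed conjugator $p$ using \eqref{eq:pinn-intersection}, and extend by the identity for the converse in (2). The one deviation is the reverse inclusion in (1): you place $\sigma^{-1}(G_v)$ inside a conjugate and then invoke Fact~\ref{fact:parabolic-basics}(5), whereas the paper avoids that fact by evaluating the inverse net directly at each $y\in K_v$ to get $K_v\leq\sigma(G_v)$.
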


\begin{proof}
For (1), let $(\sigma_i)_{i\in I}$ be a net in $\ASpe(G)$ converging to
$\sigma\in\Aut(G)$.  Fix $v\in V(\Gamma)$ and choose $1\neq x_v\in G_v$.
By pointwise convergence, $\sigma_i(x_v)=\sigma(x_v)$ for all sufficiently
large $i$.  Hence, for sufficiently large $i$ and $j$, the conjugates
$\sigma_i(G_v)$ and $\sigma_j(G_v)$ have a common non-trivial element.
Lemma~\ref{lem:vertex-conjugates} implies that they coincide.  Thus there is
a conjugate $K_v$ of $G_v$ such that $\sigma_i(G_v)=K_v$ eventually.

\smallskip\noindent
For every $x\in G_v$, eventual equality $\sigma_i(x)=\sigma(x)$ gives
$\sigma(x)\in K_v$, and hence $\sigma(G_v)\leq K_v$.  Since inversion is
continuous, $\sigma_i^{-1}\to\sigma^{-1}$.  If $y\in K_v$, then eventually
$y\in\sigma_i(G_v)$, so $\sigma_i^{-1}(y)\in G_v$; pointwise convergence of
the inverse net gives $\sigma^{-1}(y)\in G_v$.  Thus
$K_v\leq\sigma(G_v)$, and therefore $\sigma(G_v)=K_v$.  Since this holds for
every $v$, we conclude that $\sigma\in\ASpe(G)$.

\smallskip\noindent
For (2), note first that, by~(1), $\ASpe(G)$ is closed in $\Aut(G)$.
Hence $\ISpe(G)$ is closed in $\ASpe(G)$ if and only if it is closed in
$\Aut(G)$.  Suppose first that $\ISpe(G)$ is closed, and fix
$v\in V(\Gamma)$, and let $(\alpha_i)_{i\in I}$ be a net in
$\Inn(G_v)$ converging in $\Aut(G_v)$ to $\alpha$.  Extend each $\alpha_i$, and also $\alpha$, by the
identity on all other vertex groups.  The resulting factorwise automorphisms $\widehat{\alpha_i}$ belong to $\ISpe(G)$ and converge pointwise to $\widehat{\alpha}$.  Since $\ISpe(G)$ is closed in $\Aut(G)$, we have $\widehat{\alpha}\in\ISpe(G)$.
Since $\widehat{\alpha}\in\FAut(G)$, equality
\eqref{eq:pinn-intersection} gives $\widehat{\alpha}\in\FInn(G)$, and
therefore $\alpha\in\Inn(G_v)$.  Thus $\Inn(G_v)$ is closed in
$\Aut(G_v)$ for every $v$.

\smallskip\noindent
Conversely, assume that every $\Inn(G_v)$ is closed in $\Aut(G_v)$,
and let $(\sigma_i)_{i\in I}$ be a net in $\ISpe(G)$ converging to
$\sigma\in\Aut(G)$.  By (1),
$\sigma\in\ASpe(G)$.  Fix $v\in V(\Gamma)$.  As above, there are $p\in G$ and an index
$i_0$ such that, for every $i\geq i_0$,
$\sigma_i(G_v)=\sigma(G_v)=pG_vp^{-1}$.  For every such $i$, choose $q_i\in G$ with
$\sigma_i|_{G_v}=\ad(q_i)|_{G_v}$.  Since
$q_iG_vq_i^{-1}=pG_vp^{-1}$, the element $p^{-1}q_i$ normalizes $G_v$.

By~\eqref{eq:pinn-intersection}, the factorwise extension of
$\beta_i=\ad(p^{-1})\circ\sigma_i|_{G_v}
=\ad(p^{-1}q_i)|_{G_v}$
by the identity on the other vertex groups belongs to
$\FAut(G)\cap\ISpe(G)=\FInn(G)$.  Thus
$\beta_i\in\Inn(G_v)$.  The net $(\beta_i)$ converges pointwise to
$\beta=\ad(p^{-1})\circ\sigma|_{G_v}$.  By closedness,
$\beta\in\Inn(G_v)$, and consequently $\sigma|_{G_v}$ is conjugation by an
element of $G$.  Since $v$ was arbitrary, $\sigma\in\ISpe(G)$.
\end{proof}

\subsection{Topological generation}

Our first main result in this section shows that finite-support factorwise
automorphisms together with single-component partial conjugations
topologically generate $\ASpe(G)$ and $\ISpe(G)$.  We use the following
compatibility of canonical retractions with elements of these two groups.

\begin{lemma}\label{lem:special-retraction}
Let $A\subseteq V(\Gamma)$ and let $M_A=\ker(\rho_A)$.  Then
$M_A=\left\langle\!\left\langle G_v:v\notin A\right\rangle\!\right\rangle$.
If $\sigma\in\ASpe(G)$, then $\sigma(M_A)=M_A$, and $\sigma$ induces an
automorphism $\sigma_A\in\ASpe(G_A)$ given by
$\sigma_A(x)=\rho_A(\sigma(x))$ for $x\in G_A$.
If $\sigma\in\ISpe(G)$, then $\sigma_A\in\ISpe(G_A)$.

Finally, the assignment $\sigma\mapsto\sigma_A$ defines homomorphisms
\[
 \ASpe(G)\longrightarrow\ASpe(G_A),
 \qquad
 \ISpe(G)\longrightarrow\ISpe(G_A).
\]
\end{lemma}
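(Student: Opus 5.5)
The proof is a sequence of direct verifications, in this order.

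\emph{Kernel.} The inclusion $\langle\!\langle G_v:v\notin A\rangle\!\rangle\leq M_A$ holds because $\rho_A$ kills each such $G_v$. For the reverse inclusion, observe that $G/\langle\!\langle G_v:v\notin A\rangle\!\rangle$ has the presentation of $G_A$: kill the generators outside $A$, and the remaining commutation relations are exactly those of $\Gamma_A$. Thus the induced map $G/\langle\!\langle G_v:v\notin A\rangle\!\rangle\to G_A$ is an isomorphism, since it is inverse to the map induced by the inclusion $G_A\to G$. Hence $M_A=\langle\!\langle G_v:v\notin A\rangle\!\rangle$.

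\emph{Invariance of $M_A$.} Let $\sigma\in\ASpe(G)$. For $v\notin A$ we have $\sigma(G_v)=p_vG_vp_v^{-1}\leq M_A$, so $\sigma(M_A)\leq M_A$ because $M_A$ is normal and $\sigma$ maps normal closures to normal closures. The inverse $\sigma^{-1}$ also lies in $\ASpe(G)$: from $\sigma(G_v)=p_vG_vp_v^{-1}$ we get $\sigma^{-1}(G_v)=\sigma^{-1}(p_v)^{-1}G_v\,\sigma^{-1}(p_v)$. Applying the same argument to $\sigma^{-1}$ gives $\sigma^{-1}(M_A)\leq M_A$, and therefore $\sigma(M_A)=M_A$.

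\emph{The induced automorphism.} Since $\sigma(M_A)=M_A$, $\sigma$ descends to an automorphism $\bar\sigma$ of $G/M_A\cong G_A$. Under the identification given by $\rho_A|_{G_A}$, this automorphism is $x\mapsto\rho_A(\sigma(x))$ for $x\in G_A$; that is, $\bar\sigma=\sigma_A$. In particular $\sigma_A$ is bijective: its inverse is $(\sigma^{-1})_A$, because for $x\in G_A$ we have $\rho_A\sigma\rho_A\sigma^{-1}(x)=\rho_A\sigma\sigma^{-1}(x)=x$, using $\rho_A\sigma\rho_A=\rho_A\sigma$. That identity holds because $\rho_A(y)^{-1}y\in M_A$ and $\sigma$ preserves $M_A$.

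\emph{Homomorphism property.} The same identity gives it: for $x\in G_A$,
\[
(\sigma\tau)_A(x)=\rho_A\sigma\tau(x)=\rho_A\sigma\rho_A\tau(x)=\sigma_A(\tau_A(x)).
\]

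\emph{Membership in the special groups.} For $v\in A$ we compute
\[
\sigma_A(G_v)=\rho_A(p_vG_vp_v^{-1})=\rho_A(p_v)\,G_v\,\rho_A(p_v)^{-1},
\]
with $\rho_A(p_v)\in G_A$, so $\sigma_A\in\ASpe(G_A)$. If moreover $\sigma|_{G_v}=\ad(p_v)|_{G_v}$, then applying $\rho_A$ gives $\sigma_A|_{G_v}=\ad(\rho_A(p_v))|_{G_v}$, so $\sigma_A\in\ISpe(G_A)$.

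The only potentially delicate step is identifying the kernel via the presentation. That step is routine because the graph product over $\Gamma_A$ is presented by the vertex groups in $A$ together with the commutators coming from $\Gamma_A$.
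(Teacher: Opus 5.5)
Your proposal is correct and follows essentially the same route as the paper: identify the kernel via the presentation, show $M_A$ is invariant under $\sigma$ and $\sigma^{-1}$, descend to $G/M_A\cong G_A$, and get the homomorphism property from the intertwining identity $\sigma_A\circ\rho_A=\rho_A\circ\sigma$ (your $\rho_A\sigma\rho_A=\rho_A\sigma$). You also make explicit two points the paper leaves implicit, namely that $\sigma^{-1}\in\ASpe(G)$ and that $(\sigma^{-1})_A$ inverts $\sigma_A$.
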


\begin{proof}
The description of $M_A$ follows directly from the graph-product
presentation: quotienting by the normal closure of all vertex groups outside
$A$ leaves precisely the presentation of $G_A$.

For $v\notin A$, the group $G_v$ lies in the normal subgroup $M_A$.  If
$\sigma(G_v)=pG_vp^{-1}$, then $\sigma(G_v)\leq pM_Ap^{-1} = M_A$.  Hence
$\sigma(M_A)\leq M_A$.  Applying the same argument to $\sigma^{-1}$ gives
equality, so $\sigma$ descends to an automorphism of
$G/M_A\cong G_A$.  For $v\in A$, if $\sigma(G_v)=pG_vp^{-1}$, then
$\sigma_A(G_v)=\rho_A(p)G_v\rho_A(p)^{-1}$,
which proves that $\sigma_A\in\ASpe(G_A)$.  If $\sigma$ restricts to
conjugation by $p$ on $G_v$, then $\sigma_A$ restricts to conjugation by
$\rho_A(p)$.

Finally, if $\sigma, \tau \in \ASpe(G)$, then using the defining property of the homomorphisms $(\sigma \circ \tau)_A, \sigma_A,$ and $\tau_A$, we see
\begin{align*}
(\sigma \circ \tau)_A \circ \rho_A & = \rho_A \circ (\sigma \circ \tau) \\
&= \sigma_A \circ \rho_A \circ \tau \\
&= \sigma_A \circ \tau_A \circ \rho_A.
\end{align*}
Since $\rho_A$ is surjective, this implies
$(\sigma\circ\tau)_A=\sigma_A\circ\tau_A$, proving the last claim.
\end{proof}

Our other tool is the following graph-theoretic observation, which will allow us to lift partial conjugations of standard parabolic subgroups to partial conjugations of the full graph product.

\begin{lemma}\label{lem:finite-path-closure}
For every finite $S\subseteq V(\Gamma)$ there is a finite set
$T\supseteq S$ such that, for every $u\in S$ and every connected component
$D$ of $\Gamma\setminus N^*_{\Gamma}(u)$, the set $D\cap S$ is either empty
or is contained in a single connected component of
$\Gamma_T\setminus N^*_{\Gamma_T}(u)$.
\end{lemma}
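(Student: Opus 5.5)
The construction adds finitely many path witnesses. Fix a finite $S$. For each $u\in S$ and each unordered pair $\{x,y\}\subseteq S\setminus N^*_\Gamma(u)$ with $x,y$ in the same connected component $D$ of $\Gamma\setminus N^*_\Gamma(u)$, choose a finite path $P_{u,x,y}$ from $x$ to $y$ inside $\Gamma\setminus N^*_\Gamma(u)$. Such a path exists because $D$ is a connected component of that induced subgraph. Then put
\[
 T \;=\; S\cup\bigcup_{u,x,y} V(P_{u,x,y}).
\]
There are only finitely many triples $(u,x,y)$, since $S$ is finite, and each path is finite. Hence $T$ is finite and $T\supseteq S$.

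Next we verify the property. Fix $u\in S$ and a component $D$ of $\Gamma\setminus N^*_\Gamma(u)$ with $D\cap S\neq\varnothing$. Since $u\in S\subseteq T$, the induced subgraph gives
\[
 N^*_{\Gamma_T}(u)=N^*_\Gamma(u)\cap T,
\]
so
\[
 \Gamma_T\setminus N^*_{\Gamma_T}(u)=\Gamma_{T\setminus N^*_\Gamma(u)}.
\]
Take $x,y\in D\cap S$. The chosen path $P_{u,x,y}$ has all its vertices in $T\setminus N^*_\Gamma(u)$. Its consecutive vertices are adjacent in $\Gamma$, hence also in the induced subgraph $\Gamma_T$. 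So $x$ and $y$ lie in the same component of $\Gamma_T\setminus N^*_{\Gamma_T}(u)$. Because any two points of $D\cap S$ are joined in this way, $D\cap S$ lies in a single component, which is the claim.

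The proof is essentially bookkeeping. The only point needing care is the identity $N^*_{\Gamma_T}(u)=N^*_\Gamma(u)\cap T$, which uses $u\in T$; adjacency in an induced subgraph is the same as adjacency in $\Gamma$. It also matters that the paths are chosen avoiding $N^*_\Gamma(u)$, so they survive deletion of the star in $\Gamma_T$. Note that the statement does not require distinct $\Gamma$-components to stay separated in $\Gamma_T$: passing to a subgraph can only split components, never merge them. So no further argument is needed in that direction.
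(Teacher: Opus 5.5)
Your proposal is correct and follows essentially the same route as the paper's proof: it chooses one path inside $\Gamma\setminus N^*_\Gamma(u)$ for each of the finitely many relevant triples $(u,x,y)$ and adjoins their vertices to $S$. It then uses the identity $\Gamma_T\setminus N^*_{\Gamma_T}(u)=\Gamma_{T\setminus N^*_\Gamma(u)}$ to see that these paths survive in the finite graph.
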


\begin{proof}
For every subset $T\subseteq V(\Gamma)$ and every $u\in T$, one has
$N^*_{\Gamma_T}(u)=N^*_{\Gamma}(u)\cap T$, and therefore
\begin{equation}\label{eq:finite-ambient-deleted-star}
 \Gamma_T\setminus N^*_{\Gamma_T}(u)
 =\Gamma_{T\setminus N^*_{\Gamma}(u)}.
\end{equation}
In particular, the graph on the left is the induced subgraph of
$\Gamma\setminus N^*_{\Gamma}(u)$ on the vertices belonging to $T$.

There are only finitely many triples $(u,x,y)$ with $u\in S$ and
$x,y\in S\setminus N^*_{\Gamma}(u)$.  Whenever $x$ and $y$ lie in the same
connected component of $\Gamma\setminus N^*_{\Gamma}(u)$, choose a finite
path between them in that graph.  Let $T$ be the union of $S$ with the
vertex sets of all the chosen paths; then $T$ is finite.

Fix $u\in S$ and a connected component $D$ of
$\Gamma\setminus N^*_{\Gamma}(u)$.  If $x,y\in D\cap S$, then the path
chosen for $(u,x,y)$ has all its vertices in $T$ and avoids
$N^*_{\Gamma}(u)$.  By~\eqref{eq:finite-ambient-deleted-star}, it is a path
in $\Gamma_T\setminus N^*_{\Gamma_T}(u)$.  Thus every two vertices of $D\cap S$ lie in the same connected component of $\Gamma_T\setminus N^*_{\Gamma_T}(u)$, which proves the assertion.
\end{proof}

Let $\PartConjsc(G)$ be the subgroup generated by those partial
conjugations $\pi(u,a,C)$ for which $C$ is a single connected component of
$\Gamma\setminus N^*(u)$.  Then
\[
 \PartConjsc(G)\leq\PartConj(G)\leq
 \cl{\PartConjsc(G)}=\cl{\PartConj(G)}.
\]
Indeed, on a prescribed finite subset of $G$, a partial conjugation supported
on an arbitrary union of components agrees with the product of the finitely
many single-component partial conjugations meeting the supports of those
elements.  When $\Gamma$ is finite, all the displayed inclusions are
equalities.

\begin{theorem}\label{thm:partial-density}
One has
\[
 \ISpe(G)=\cl{\FInnfin(G) \PartConjsc(G)}^{\ISpe(G)}
\] and \[
\ASpe(G)=\cl{\FAutfin(G) \PartConjsc(G)}.
\]
If $\Gamma$ is finite, the closures may be omitted.
\end{theorem}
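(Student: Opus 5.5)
The easy inclusions come first. Finite-support factorwise automorphisms lie in $\FAut(G)\leq\ASpe(G)$, and single-component partial conjugations lie in $\ISpe(G)\leq\ASpe(G)$. By Theorem~\ref{thm:special-closedness}(1), $\ASpe(G)$ is closed, so $\cl{\FAutfin(G)\PartConjsc(G)}\subseteq\ASpe(G)$. For $\ISpe(G)$ we take the closure relative to $\ISpe(G)$, and $\FInnfin(G)\PartConjsc(G)\subseteq\ISpe(G)$, so again one inclusion is immediate. When $\Gamma$ is finite, $\FAutfin=\FAut$, $\FInnfin=\FInn$ and $\PartConjsc=\PartConj$, so the finite statement is exactly Theorem~\ref{thm:finite-decomposition}.

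For density, fix $\sigma\in\ASpe(G)$ and a finite set $X\subseteq G$. We must find $\tau\in\FAutfin(G)\PartConjsc(G)$ with $\tau|_X=\sigma|_X$.
\begin{enumerate}
\item Choose a finite $S\subseteq V(\Gamma)$ containing $\supp(X)$. For each $x\in X$ write $\sigma(x)$ in normal form; also, for each vertex $v\in\supp(X)$, fix $p_v$ with $\sigma(G_v)=p_vG_vp_v^{-1}$. Enlarge $S$ to contain $\supp(\sigma(X))$ and all $\supp(p_v)$.
\item Apply Lemma~\ref{lem:finite-path-closure} to $S$ to get a finite $T\supseteq S$.
\item By Lemma~\ref{lem:special-retraction}, $\sigma_T=\rho_T\circ\sigma|_{G_T}\in\ASpe(G_T)$. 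Since $\sigma(X)\subseteq G_T$, we have $\sigma_T|_X=\sigma|_X$.
\item By Theorem~\ref{thm:finite-decomposition} for the finite graph $\Gamma_T$, write $\sigma_T=\phi\circ\pi_1\cdots\pi_m$ with $\phi\in\FAut(G_T)$ and each $\pi_j=\pi(u_j,s_j,C_j)$ a single-component partial conjugation of $G_T$.
\item Lift $\phi$ to $\FAutfin(G)$ by extending it by the identity off $T$.
\item Lift each $\pi_j$ to a partial conjugation $\widehat{\pi}_j=\pi(u_j,s_j,\widehat C_j)$ of $G$, where $\widehat C_j$ is a union of components $D$ of $\Gamma\setminus N^*(u_j)$. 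The lift must agree with $\pi_j$ on $G_S$. Then replace $\widehat\pi_j$ by a product of single-component partial conjugations that agrees with it on the relevant finite set, as in the remark preceding the theorem.
\end{enumerate}

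The main obstacle is step 6: compatibility between components of $\Gamma_T\setminus N^*_{\Gamma_T}(u_j)$ and those of $\Gamma\setminus N^*(u_j)$. There are two difficulties.

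First, a lifted partial conjugation must agree with $\pi_j$ on every intermediate element of the composition, not only on $X$. The intermediate images $\pi_{j+1}\cdots\pi_m(X)$ may involve vertices of $T\setminus S$, where Lemma~\ref{lem:finite-path-closure} gives no control. I would fix this by iterating. Put $S_0=S$, and let $S_1$ be $S_0$ together with the supports of the $s_j$ and of all intermediate images; then apply the lemma to $S_1$. The catch is that the decomposition itself depends on $T$. So the cleaner route is to control only the vertex groups. Each $\pi_j$ acts on a vertex group $G_v$ by conjugation by $s_j$ or trivially. The composite $\pi_1\cdots\pi_m$ on $G_v$ is therefore $\ad(c_v)$, where $c_v$ is a product of those $s_j$ with $v\in C_j$, taken in order. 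Hence the lift only needs the right membership pattern $v\in\widehat C_j\iff v\in C_j$ for $v\in\supp(X)$, and $u_j\in T$.

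Second, a component $D$ of $\Gamma\setminus N^*(u_j)$ may meet $T$ in several components of the deleted star of $\Gamma_T$, some inside $C_j$ and some outside. For $v\in S$ this cannot happen, because Lemma~\ref{lem:finite-path-closure} puts $D\cap S$ inside one component of $\Gamma_T\setminus N^*_{\Gamma_T}(u)$ whenever $u\in S$. So I would define $\widehat C_j$ as the union of the components $D$ with $D\cap S\cap C_j\neq\varnothing$. I also need $u_j\in S$; this may require first choosing $T$, then re-choosing $S\supseteq$ all relevant vertices, and arguing via retraction to $S$ that the decomposition may be taken with $u_j\in S$.

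Finally, for $\sigma\in\ISpe(G)$, the same argument with Theorem~\ref{thm:finite-decomposition}'s second equality gives $\phi\in\FInn(G_T)$, whose identity-extension lies in $\FInnfin(G)$.
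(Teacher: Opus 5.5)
Your easy inclusions and the finite case are fine. Your lifting rule for a factor $\pi_j$ with $u_j\in S$ (take the ambient component containing $C_j$, via Lemma~\ref{lem:finite-path-closure}) is also the right one. The density argument, however, has a genuine gap at the point you flag, and the proposed repair does not work.

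The claim that $\pi_1\cdots\pi_m$ acts on $G_v$ as $\ad(c_v)$, with $c_v$ the ordered product of the $s_j$ with $v\in C_j$, is false. The reason is that $\pi_j$ also conjugates every later conjugator $s_k$ ($k>j$) with $u_k\in C_j$. For example, suppose $v$ is adjacent to $u_1$, $u_2\in C_1$ and $v\in C_2$. Then $\pi_1\pi_2$ acts on $G_v$ as $\ad(s_1s_2)$, which differs from $\ad(s_2)$ there. So matching memberships for $v\in\supp(X)$ is not enough: you would also need them for the base vertices $u_k$. When $u_k\in T\setminus S$, Lemma~\ref{lem:finite-path-closure} gives no control over those. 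Enlarging $S$ to contain all $u_j$ is circular, as you note, because it forces a new $T$ and a new decomposition.

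The missing idea is to compare with $\sigma_S=\rho_S\circ\sigma|_{G_S}$ rather than with $\sigma_T$. Use the map $r_{T,S}\colon\ASpe(G_T)\to\ASpe(G_S)$, $\tau\mapsto\rho^T_S\circ\tau|_{G_S}$. By Lemma~\ref{lem:special-retraction} applied inside $G_T$, it is a homomorphism, and it satisfies $r_{T,S}(\sigma_T)=\sigma_S$.

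\begin{itemize}
\item If $u_j\notin S$, then $\rho^T_S(s_j)=1$, so $r_{T,S}(\pi_j)=1$ and $\pi_j$ is simply discarded.
\item If $u_j\in S$, the lift $\pi(u_j,s_j,D_j)$ satisfies $D_j\cap S=C_j\cap S$. Hence it preserves $G_S$ and agrees with $r_{T,S}(\pi_j)$ on all of $G_S$, including the groups $G_{u_k}$ with $u_k\in S$. This is exactly what repairs the membership problem above.
\end{itemize}

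Since every lift preserves $G_S$, the product of the lifts restricts on $G_S$ to
\[
r_{T,S}(\phi)\,r_{T,S}(\pi_1)\cdots r_{T,S}(\pi_m)=r_{T,S}(\sigma_T)=\sigma_S .
\]
Because $\sigma(X)\subseteq G_S$, we get $\sigma_S(f)=\rho_S(\sigma(f))=\sigma(f)$ for every $f\in X$. Intermediate images never leave $G_S$, so both of your difficulties disappear. This is how the paper argues, and it only needs $S=\supp(X)\cup\supp(\sigma(X))$; the sets $\supp(p_v)$ are unnecessary.
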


\begin{proof}
The reverse inclusions follow from Theorem \ref{thm:special-closedness}, the inclusions $\FInn(G) \leq \ISpe(G)$ and $\FAut(G) \leq \ASpe(G)$ noted earlier, and the observation that $\PartConjsc(G) \leq \ISpe(G)$.
We prove the forward inclusions by finite approximation.

\smallskip\noindent
Fix $\sigma\in\ASpe(G)$ and a finite set $F\subseteq G$, and put
$S=\supp(F)\cup\supp(\sigma(F))$.
Choose a finite set $T\supseteq S$ as in
Lemma~\ref{lem:finite-path-closure}.  By
Lemma~\ref{lem:special-retraction}, $\sigma$ induces automorphisms
$\sigma_T\in\ASpe(G_T)$ and $\sigma_S\in\ASpe(G_S)$, given by
\[
 \sigma_T(x)=\rho_T(\sigma(x))\quad(x\in G_T),
 \qquad
 \sigma_S(x)=\rho_S(\sigma(x))\quad(x\in G_S),
\]
where $\rho_A$ denotes the canonical retraction from
Notation~\ref{retraction}.

\smallskip\noindent
For $S\subseteq T$, write
$\rho^T_S=\rho_S|_{G_T}\colon G_T\longrightarrow G_S$
for the canonical retraction.  Applying
Lemma~\ref{lem:special-retraction} to the graph product $G_T$ and the subset
$S\subseteq T$, every $\tau\in\ASpe(G_T)$ induces an automorphism
$r_{T,S}(\tau)\in\ASpe(G_S)$ given by
$r_{T,S}(\tau)(x)=\rho^T_S(\tau(x))$ for $x\in G_S$.
The canonical retractions satisfy
$\rho^T_S\circ\rho_T=\rho_S$.  Hence, for every $x\in G_S$,
\[
 r_{T,S}(\sigma_T)(x)
 =\rho^T_S(\sigma_T(x))
 =\rho^T_S(\rho_T(\sigma(x)))
 =\rho_S(\sigma(x))
 =\sigma_S(x),
\]
and therefore
\begin{equation}\label{eq:restriction-functoriality}
 r_{T,S}(\sigma_T)=\sigma_S.
\end{equation}

\smallskip\noindent
Since $T$ is finite, Theorem~\ref{thm:finite-decomposition} gives 
$\ASpe(G_T)=\FAut(G_T)\PartConj(G_T)$. Moreover, a partial conjugation of $G_T$ whose
conjugated set is a union of components is a finite product of partial conjugations whose conjugated sets are single components.  We may therefore write
\begin{equation}\label{eq:sigmaT-partial-product}
 \sigma_T=\theta \pi_1\cdots\pi_m,
 \qquad
 \pi_i=\pi_T(u_i,a_i,C_i)
 \quad(1\leq i\leq m),
\end{equation}
where $\theta \in \FAut(G_T)$ is a factorwise automorphism of $G_T$, $u_i\in T$, $1\neq a_i\in G_{u_i}$, and $C_i$ is a connected
component of $\Gamma_T\setminus N^*_{\Gamma_T}(u_i)$.  Thus $\pi_i$
conjugates every $G_v$ with $v\in C_i$ by $a_i$ and fixes all the other
vertex groups of $G_T$.

\smallskip\noindent
For each $i\in\{1,\ldots,m\}$, we construct
$\widetilde\pi_i\in\PartConjsc(G)$ which preserves $G_S$ setwise and
satisfies
\begin{equation}\label{eq:lift-restricted-partial}
 \widetilde\pi_i|_{G_S}=r_{T,S}(\pi_i).
\end{equation}
Suppose first that $u_i\notin S$.  Since $a_i\in G_{u_i}$ and the canonical
retraction $\rho^T_S$ kills every vertex group indexed by $T\setminus S$, we
have $\rho^T_S(a_i)=1$.  Thus, for $v\in S$ and $g\in G_v$,
\[
 r_{T,S}(\pi_i)(g)=\rho^T_S(\pi_i(g))
 =\begin{cases}
   \rho^T_S(a_i)\,\rho^T_S(g)\,\rho^T_S(a_i)^{-1}=g,&v\in C_i,\\
   \rho^T_S(g)=g,&v\notin C_i.
  \end{cases}
\]
Hence $r_{T,S}(\pi_i)$ is the identity on $G_S$, and we set
$\widetilde\pi_i=1$.

\smallskip\noindent
Suppose now that $u_i\in S$.  If $C_i\cap S=\varnothing$, then $\pi_i$
fixes every vertex group of $G_S$, so again
$r_{T,S}(\pi_i)=1$ and we set $\widetilde\pi_i=1$.  Assume therefore that
$C_i\cap S\neq\varnothing$.

By~\eqref{eq:finite-ambient-deleted-star}, the graph
$\Gamma_T\setminus N^*_{\Gamma_T}(u_i)$ is an induced subgraph of
$\Gamma\setminus N^*_{\Gamma}(u_i)$.  Since $C_i$ is connected in the
former graph, it is contained in a unique connected component $D_i$ of the
latter graph.  
We claim that
\begin{equation}\label{eq:ambient-finite-component-intersection}
 D_i\cap S=C_i\cap S.
\end{equation}
The inclusion $C_i\cap S\subseteq D_i\cap S$ follows immediately from
$C_i\subseteq D_i$.  For the reverse inclusion, apply
Lemma~\ref{lem:finite-path-closure} to the ambient component $D_i$.  It gives
a connected component $E_i$ of
$\Gamma_T\setminus N^*_{\Gamma_T}(u_i)$ such that
$D_i\cap S\subseteq E_i$. Since $\varnothing \neq C_i \cap S \subset D_i \cap S \subset E_i$, we have $C_i \cap E_i \neq \varnothing$. Since both $C_i$ and $E_i$ are connected components of $\Gamma_T\setminus N^*_{\Gamma_T}(u_i)$, this yields $E_i = C_i$. Thus $D_i \cap S \subset E_i \cap S = C_i \cap S$, proving the claim.

Define $\widetilde\pi_i=\pi(u_i,a_i,D_i)$.
This is a single-component partial conjugation of the ambient graph product,
so $\widetilde\pi_i\in\PartConjsc(G)$.  For $v\in S$ and $g\in G_v$,
equality
\eqref{eq:ambient-finite-component-intersection} gives
$v\in D_i$ if and only if $v\in C_i$.  Moreover, by assumption $u_i\in S$, so
$a_i\in G_S$ and $\rho^T_S(a_i)=a_i$.  Consequently,
\[
 \widetilde\pi_i(g)
 =\begin{cases}
   a_iga_i^{-1},&v\in C_i,\\
   g,&v\notin C_i
  \end{cases}
 =\rho^T_S(\pi_i(g))
 =r_{T,S}(\pi_i)(g).
\]
The two automorphisms therefore agree on every vertex group generating
$G_S$, which proves~\eqref{eq:lift-restricted-partial}; in particular,
$\widetilde\pi_i$ preserves $G_S$ setwise.

\smallskip\noindent

Finally, choose a lift $\widetilde{\theta} \in \FAutfin(G)$ such that $\widetilde{\theta}|_{G_S} = r_{T,S}(\theta)$; for example, choose $\widetilde{\theta}$ to agree with $\theta$ on vertices of $T$ and be trivial elsewhere.

Set $\widetilde\sigma=\widetilde{\theta}\widetilde\pi_1\cdots\widetilde\pi_m$.
Observe $\widetilde\theta$ and each $ \widetilde\pi_i$ preserves the whole subgroup $G_S$, not merely the
finite set $F$.  Consequently, their restrictions to $G_S$ may be composed. As noted in Lemma~\ref{lem:special-retraction}, the assignment $r_{T,S}$ is a homomorphism. Using this fact, together with
\eqref{eq:lift-restricted-partial},
\eqref{eq:sigmaT-partial-product}, and
\eqref{eq:restriction-functoriality}, we obtain
\[
 \begin{aligned}
 \widetilde\sigma|_{G_S}
 &=\bigl(\widetilde{\theta}|_{G_S} \bigr)\bigl(\widetilde\pi_1|_{G_S}\bigr)\cdots
   \bigl(\widetilde\pi_m|_{G_S}\bigr)\\
 &=r_{T,S}(\theta)r_{T,S}(\pi_1)\cdots r_{T,S}(\pi_m)\\
 &=r_{T,S}(\theta \pi_1\cdots\pi_m)
 =r_{T,S}(\sigma_T)
 =\sigma_S.
 \end{aligned}
\]

\smallskip\noindent
For every $f\in F$, both $f$ and $\sigma(f)$ belong to $G_S$ by the
definition of $S$.  Therefore
\[
 \widetilde\sigma(f)=\sigma_S(f)=\rho_S(\sigma(f))=\sigma(f).
\]
We have found an element $\widetilde\sigma \in \FAutfin(G)\PartConjsc(G)$ agreeing with $\sigma$ on the
arbitrary finite set $F$.  Hence
$\sigma\in\cl{\FAutfin(G)\PartConjsc(G)}$, and so
$$\ASpe(G)=\cl{\FAutfin(G)\PartConjsc(G)}.$$

If instead we had started with $\sigma\in\ISpe(G)$, then
$\sigma_T\in\ISpe(G_T)$.  Hence $\theta$ and its lift
$\widetilde{\theta}$ could be chosen in $\FInn(G_T)$ and $\FInn(G)$,
respectively.  This yields an element
$\widetilde\sigma\in\FInnfin(G)\PartConjsc(G)$ agreeing with $\sigma$ on
the arbitrary finite set $F$.  Therefore
$\ISpe(G)=\cl{\FInnfin(G)\PartConjsc(G)}^{\,\ISpe(G)}$.

Finally, if $\Gamma$ is finite, we have $\FInn(G) = \FInnfin(G)$ and $\FAut(G) = \FAutfin(G)$. Furthermore, every partial conjugation may be written as a product of single-component partial conjugations. The result therefore follows from Theorem \ref{thm:finite-decomposition}.
\end{proof}

This result implies the following, which generalizes the density theorem
\cite[Theorem~5.4]{PaoliniRabideau2026} from right-angled Coxeter groups to
arbitrary graph products.  Indeed, in a RACG one has
$\ASpe(G)=\ISpe(G)$ and $\FInn(G)=\FAut(G)=1$.

\begin{theorem}\label{thm:special-density}
If $\Gamma$ is star-connected, then
\begin{equation}\label{eq:ASpe-density}
 \ASpe(G)=\cl{\Inn(G)\FAut(G)}
\end{equation}
and
\begin{equation}\label{eq:ISpe-density}
 \ISpe(G)=\cl{\Inn(G)\FInn(G)}^{\,\ISpe(G)}.
\end{equation}
\end{theorem}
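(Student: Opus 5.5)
The plan is to deduce both equalities from Theorem~\ref{thm:partial-density} by showing that, on a star-connected graph, every single-component partial conjugation already lies in $\Inn(G)\FInn(G)$. Then $\FAutfin(G)\PartConjsc(G)\subseteq\FAut(G)\Inn(G)\FInn(G)$. Since $\FInn(G)\leq\FAut(G)$, and $\FAut(G)$ normalizes $\Inn(G)$ via $\theta\,\ad(g)\,\theta^{-1}=\ad(\theta(g))$, the set $\Inn(G)\FAut(G)=\FAut(G)\Inn(G)$ is a subgroup. It contains the dense subgroup of Theorem~\ref{thm:partial-density}. The reverse inclusion $\Inn(G)\FAut(G)\leq\ASpe(G)$ is immediate, and $\ASpe(G)$ is closed by Theorem~\ref{thm:special-closedness}(1). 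Together these give \eqref{eq:ASpe-density}.

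The key step is the following. Let $\pi(u,s,C)$ be a partial conjugation with $C$ a single connected component of $\Gamma\setminus N^*(u)$. Star-connectedness says that $\Gamma\setminus N^*(u)$ has at most one component, so either it is empty (impossible, since $C\neq\varnothing$) or $C=V(\Gamma)\setminus N^*(u)$. Thus $\pi(u,s,C)$ is entire, and by the identity recorded after Definition~\ref{def:partial-conjugation},
\[
\pi(u,s,C)=\ad(s)\circ\alpha_{u,s^{-1}}\in\Inn(G)\FInn(G).
\]
Hence $\PartConjsc(G)\leq\Inn(G)\FInn(G)$.

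For \eqref{eq:ISpe-density} the argument is the same, with $\FInnfin(G)$ in place of $\FAutfin(G)$. The set $\Inn(G)\FInn(G)$ is a subgroup because $\FInn(G)$ normalizes $\Inn(G)$. It contains $\FInnfin(G)\PartConjsc(G)$ and lies inside $\ISpe(G)$, since inner automorphisms and factorwise inner automorphisms belong to $\ISpe(G)$. Taking closures relative to $\ISpe(G)$ and applying Theorem~\ref{thm:partial-density} yields
\[
\ISpe(G)=\cl{\FInnfin(G)\PartConjsc(G)}^{\ISpe(G)}\subseteq\cl{\Inn(G)\FInn(G)}^{\ISpe(G)}\subseteq\ISpe(G).
\]

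I expect no serious obstacle. The only points needing care are that the products are subgroups (so that closures of products make sense as claimed), and that the relative closure is used in the second statement, since $\ISpe(G)$ need not be closed in $\Aut(G)$.
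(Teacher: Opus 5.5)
Your proof is correct and takes essentially the same route as the paper. Under star-connectedness every (single-component) partial conjugation is entire, so it equals $\ad(s)\circ\alpha_{u,s^{-1}}\in\Inn(G)\FInn(G)$, and both density statements then follow from Theorem~\ref{thm:partial-density} together with the closedness of $\ASpe(G)$; your checks that the products are subgroups and that the second closure is taken relative to $\ISpe(G)$ only make explicit what the paper's shorter argument leaves implicit.
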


\begin{proof}
The reverse inclusions in both equalities are immediate.  For the forward
inclusions, note that when $\Gamma$ is star-connected, every partial
conjugation is entire.  The formula following
Definition~\ref{def:partial-conjugation} therefore gives
$\PartConj(G)\leq\Inn(G)\FInn(G)$.  Combining this with
Theorem~\ref{thm:partial-density} yields
\begin{align*}
 \ASpe(G)
 &=\cl{\PartConj(G)\FAut(G)}\\
 &\leq\cl{\Inn(G)\FInn(G)\FAut(G)}
 =\cl{\Inn(G)\FAut(G)}.
\end{align*}
The same argument gives
$\ISpe(G)=\cl{\Inn(G)\FInn(G)}^{\,\ISpe(G)}$.
\end{proof}

\section{Finite star bases and star-connected graphs}\label{sec:finite-star-base}
Having found topological generating sets for $\ASpe(G)$ and $\ISpe(G)$,
we now seek graph-theoretic conditions under which the corresponding dense
subgroups are closed and hence equal to the ambient special automorphism
groups.  We first treat star-connected graphs.  In this case every partial
conjugation is entire, and the relevant finite-detection condition is the
existence of a finite star base.

\begin{lemma}\label{lem:universal-inner-factorwise}
If $z\in G_{Z_\Gamma}$, then $\ad(z)\in\FInn(G)$.
\end{lemma}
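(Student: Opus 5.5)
The plan is to use the fact that $G_{Z_\Gamma}$ is the restricted direct product of the universal vertex groups, and then show directly that $\ad(z)$ preserves each vertex group and acts on it by an inner automorphism.

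Since every vertex of $Z_\Gamma$ is adjacent to every other vertex, the vertex groups $G_w$ with $w\in Z_\Gamma$ pairwise commute. They also commute with every other vertex group. So any $z\in G_{Z_\Gamma}$ can be written as a finite product $z=z_{w_1}\cdots z_{w_k}$ with distinct $w_j\in Z_\Gamma$ and $z_{w_j}\in G_{w_j}$.

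Fix $v\in V(\Gamma)$ and $g\in G_v$. There are two cases.

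\emph{Case $v\notin Z_\Gamma$.} Each $w_j$ is universal and distinct from $v$, hence adjacent to $v$. Therefore every factor $z_{w_j}$ commutes with $g$, and $\ad(z)(g)=g$.

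\emph{Case $v\in Z_\Gamma$.} All factors $z_{w_j}$ with $w_j\neq v$ commute with $G_v$ for the same reason. Hence $\ad(z)|_{G_v}=\ad(z_v)|_{G_v}$, where $z_v$ is the $G_v$-coordinate of $z$, taken to be $1$ if $v\notin\{w_1,\dots,w_k\}$. This is an inner automorphism of $G_v$.

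In both cases $\ad(z)(G_v)=G_v$, and the restriction to $G_v$ lies in $\Inn(G_v)$. Hence $\ad(z)\in\FAut(G)$, and by Definition~\ref{def:special-groups}(4), $\ad(z)\in\FInn(G)$.

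There is no real obstacle here. The only point needing care is the decomposition of $z$ into commuting coordinates, which follows from the presentation; Fact~\ref{fact:parabolic-basics} is not needed.
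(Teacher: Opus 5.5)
Your argument is correct, and it is a more elementary, hands-on route than the paper's. The paper argues structurally. Since $Z_\Gamma\subseteq N^*(v)$ for every $v$, one has $z\in G_{N^*(v)}=N_G(G_v)$, so $\ad(z)\in\FAut(G)$. As $\ad(z)$ obviously lies in $\ISpe(G)$, the identity $\FInn(G)=\FAut(G)\cap\ISpe(G)$ of \eqref{eq:pinn-intersection} finishes the proof. That identity is proved in general using the normalizer formula of Fact~\ref{fact:parabolic-basics}(2) and the splitting $G_{N^*(v)}=G_v\times G_{N(v)}$. Unpacked, the paper therefore performs the same decomposition you do (a $G_v$-part times something centralizing $G_v$), but separately for each $v$ inside $G_{N^*(v)}$ rather than once for $z$ over $Z_\Gamma$. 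The paper's version is shorter given the machinery already in place, and it reuses an identity needed elsewhere. Yours needs only the defining commutation relations. It also identifies $\ad(z)$ explicitly: it is trivial at non-universal vertices and is conjugation by the $v$-component of $z$ at each universal vertex $v$. One small point: you never use uniqueness of the components $z_{w_j}$, only the existence of a collected decomposition. So the appeal to $G_{Z_\Gamma}$ being a restricted direct product, which does require the normal-form theorem, can be dropped altogether.
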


\begin{proof}
For every $v\in V(\Gamma)$, the definition of $Z_\Gamma$ and
Fact~\ref{fact:parabolic-basics}(2) give
$z\in G_{N^*(v)}=N_G(G_v)$.  Thus $\ad(z)\in\FAut(G)$, while it
plainly belongs to $\ISpe(G)$.  Equation~\eqref{eq:pinn-intersection}
therefore gives $\ad(z)\in\FInn(G)$.
\end{proof}

\begin{proposition}\label{prop:products-closed-fsb}
If $\Gamma$ has a finite star base, then
$\Inn(G)\FAut(G)$ is closed in $\Aut(G)$, and
$\Inn(G)\FInn(G)$ is closed in the relative topology of $\ISpe(G)$.
\end{proposition}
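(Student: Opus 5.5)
Let $A$ be a finite star base. The plan is to take a net $\sigma_i=\ad(g_i)\tau_i$ in $\Inn(G)\FAut(G)$, with $g_i\in G$ and $\tau_i\in\FAut(G)$, converging to some $\sigma\in\Aut(G)$, and to produce a single $g\in G$ with $\ad(g)^{-1}\sigma\in\FAut(G)$. Since $\FAut(G)$ is closed (Proposition~\ref{prop:paut-closed}), it suffices to show that the inner parts $g_i$ can be modified so as to stabilize eventually. For each $a\in A$ choose $1\neq x_a\in G_a$. Eventually $\sigma_i(x_a)=\sigma(x_a)$ for all $a\in A$, and hence $\sigma_i(G_a)=g_iG_ag_i^{-1}$ equals a fixed conjugate $K_a$ of $G_a$ (Lemma~\ref{lem:vertex-conjugates}, as in the proof of Theorem~\ref{thm:special-closedness}). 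Fix such an index $i_0$. Then for $i\geq i_0$ the element $g_{i_0}^{-1}g_i$ normalizes every $G_a$, $a\in A$. By Fact~\ref{fact:parabolic-basics}(1),(2) it therefore lies in $\bigcap_{a\in A}G_{N^*(a)}=G_{\bigcap_a N^*(a)}=G_{Z_\Gamma}$, using that $A$ is a star base.

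Next I absorb this discrepancy. Write $g_i=g_{i_0}z_i$ with $z_i\in G_{Z_\Gamma}$. By Lemma~\ref{lem:universal-inner-factorwise}, $\ad(z_i)\in\FInn(G)\leq\FAut(G)$, so $\sigma_i=\ad(g_{i_0})\bigl(\ad(z_i)\tau_i\bigr)$ with $\ad(z_i)\tau_i\in\FAut(G)$. Hence $\ad(g_{i_0})^{-1}\sigma_i\in\FAut(G)$ for all $i\geq i_0$, and this net converges to $\ad(g_{i_0})^{-1}\sigma$ because group operations are continuous. By closedness of $\FAut(G)$, the limit $\ad(g_{i_0})^{-1}\sigma$ lies in $\FAut(G)$, so $\sigma\in\Inn(G)\FAut(G)$.

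For the second assertion, take the $\tau_i$ in $\FInn(G)$ and $\sigma\in\ISpe(G)$. The same argument shows that $\tau:=\ad(g_{i_0})^{-1}\sigma$ lies in $\FAut(G)$. It also lies in $\ISpe(G)$, since $\ISpe(G)$ is a group containing $\Inn(G)$ and $\sigma$. By \eqref{eq:pinn-intersection}, $\tau\in\FAut(G)\cap\ISpe(G)=\FInn(G)$, so $\sigma\in\Inn(G)\FInn(G)$. This argument needs no closedness hypothesis on $\Inn(G_v)$, which is why the statement is relative to $\ISpe(G)$.

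The main obstacle is the stabilization step. One must check that pointwise convergence on the finitely many $x_a$ really pins down $g_i$ up to the normalizers, and here the finite star base is used exactly once. The case $A=\varnothing$ (complete $\Gamma$) is harmless: there $G_{Z_\Gamma}=G$ and $\Inn(G)\leq\FAut(G)$ already.
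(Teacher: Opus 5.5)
Your proof is correct and follows the same route as the paper. Both arguments stabilize the conjugates $g_iG_ag_i^{-1}$ on a finite star base, absorb the discrepancy $g_{i_0}^{-1}g_i\in G_{Z_\Gamma}$ into $\FInn(G)$ via Lemma~\ref{lem:universal-inner-factorwise}, and then use closedness of $\FAut(G)$. For the relative statement, the paper phrases the last step as relative closedness of $\FInn(G)=\FAut(G)\cap\ISpe(G)$ in $\ISpe(G)$, which is exactly your intersection argument.
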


\begin{proof}
Let $A$ be a finite star base.  Suppose that a net $(\phi_i)$ in
$\Inn(G)\FAut(G)$ converges to $\phi\in\Aut(G)$, and write
$\phi_i=\ad(g_i)\tau_i$.  For every $a\in A$, choose
$1\neq x_a\in G_a$.  For all sufficiently large $i,j$, simultaneously for
all $a\in A$, one has $\phi_i(x_a)=\phi_j(x_a)$.  This non-trivial element
lies in both $g_iG_ag_i^{-1}$ and $g_jG_ag_j^{-1}$, so
Lemma~\ref{lem:vertex-conjugates} gives
$g_iG_ag_i^{-1}=g_jG_ag_j^{-1}$.
Hence
\[
 g_j^{-1}g_i\in\bigcap_{a\in A}N_G(G_a)
 =\bigcap_{a\in A}G_{N^*(a)}=G_{Z_\Gamma}.
\]

Fix one sufficiently large $j$.  By
Lemma~\ref{lem:universal-inner-factorwise}, for all sufficiently large $i$,
we have $\ad(g_j^{-1}g_i)\in\FInn(G)$.  Therefore
\begin{equation}\label{eq:coset_stabilization}
 \ad(g_i)\FInn(G)=\ad(g_j)\FInn(G),
\end{equation}
and consequently
$\ad(g_i)\FAut(G)=\ad(g_j)\FAut(G)$.  Hence,
$\phi_i\in\ad(g_j)\FAut(G)$.
This coset is closed by Proposition~\ref{prop:paut-closed}; hence
$\phi\in\ad(g_j)\FAut(G)\subseteq\Inn(G)\FAut(G)$.
This proves the first assertion.

For the second assertion, let a net in $\Inn(G)\FInn(G)$ converge to an
element of $\ISpe(G)$.  The same argument, using~\eqref{eq:coset_stabilization}, eventually places
the net in one coset of $\FInn(G)$.  By~\eqref{eq:pinn-intersection} and
Proposition~\ref{prop:paut-closed}, $\FInn(G)$ is closed in the relative
topology of $\ISpe(G)$. 
The limit therefore belongs to
$\Inn(G)\FInn(G)$.
\end{proof}

We next isolate the obstruction that will be used to prove the necessity of
star-connectedness in Theorem~\ref{thm:exact-special} below.  If $\Gamma$ is not
star-connected, then $\Gamma\setminus N^*(u)$ has at least two connected
components for some vertex $u$, and conjugating only one of these components
gives a non-trivial non-entire partial conjugation belonging to $\ISpe(G)$.
The following lemma shows that such an automorphism cannot be written as the
product of an inner automorphism and an element of $\FAut(G)$.

\begin{lemma}\label{lem:nonentire-obstruction}
A non-trivial non-entire partial conjugation is not an element of $\cl{\Inn(G)\FAut(G)}$.
\end{lemma}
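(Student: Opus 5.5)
Let $\pi=\pi(u,s,C)$ with $1\neq s\in G_u$ and $\varnothing\subsetneq C\subsetneq V(\Gamma)\setminus N^*(u)$, so $C$ is a nonempty union of components of $\Gamma\setminus N^*(u)$ whose complement $C'=(V(\Gamma)\setminus N^*(u))\setminus C$ is also nonempty. Choose $c\in C$ and $c'\in C'$, and nontrivial elements $x\in G_c$, $y\in G_{c'}$. The strategy is to test $\pi$ on the finite set $F=\{x,y\}$ (possibly enlarged by one element of $G_u$). Suppose for contradiction that some $\phi=\ad(g)\tau$ with $\tau\in\FAut(G)$ agrees with $\pi$ on $F$. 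Then $gG_cg^{-1}\ni\phi(x)=sxs^{-1}\in sG_cs^{-1}$, so Lemma~\ref{lem:vertex-conjugates} gives $gG_cg^{-1}=sG_cs^{-1}$, hence $s^{-1}g\in N_G(G_c)=G_{N^*(c)}$. Likewise $\phi(y)=y$ forces $g\in N_G(G_{c'})=G_{N^*(c')}$. So $s\in G_{N^*(c)}G_{N^*(c')}$, i.e. $s=ab^{-1}$ with $a\in G_{N^*(c)}$, $b\in G_{N^*(c')}$.

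The key step is to show this is impossible, since $u\notin N^*(c)\cup N^*(c')$. Apply the retraction $\rho_{\{u,c,c'\}}$ (Notation~\ref{retraction}) onto the finite standard parabolic on $\{u,c,c'\}$. In that induced subgraph $u$ is adjacent to neither $c$ nor $c'$, and $c,c'$ are non-adjacent (they lie in different components of $\Gamma\setminus N^*(u)$), so $G_{\{u,c,c'\}}=G_u*G_c*G_{c'}$. The retraction sends $N^*(c)$-part into $G_c$ (vertices of $N^*(c)$ in $\{u,c,c'\}$ are only $c$) and $N^*(c')$-part into $G_{c'}$. Thus $s=\rho(s)\in G_cG_{c'}$ inside a free product, which contradicts $1\neq s\in G_u$ by the normal form for free products. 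Hence no element of $\Inn(G)\FAut(G)$ agrees with $\pi$ on $F$, and the pointwise stabilizer neighborhood $\{\psi:\psi|_F=\pi|_F\}$ of $\pi$ misses $\Inn(G)\FAut(G)$, so $\pi\notin\cl{\Inn(G)\FAut(G)}$.

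The main point needing care is the non-adjacency of $c$ and $c'$, which holds since distinct components of $\Gamma\setminus N^*(u)$ have no edges between them; also one should double-check the normalizer identification via Fact~\ref{fact:parabolic-basics}(2) and that $\tau(G_c)=G_c$ is what makes $\phi(G_c)=gG_cg^{-1}$. Nontriviality of $\pi$ is automatic once $s\neq1$ and $C\neq\varnothing$, and the argument is exactly where non-entirety (existence of $c'$) is used.
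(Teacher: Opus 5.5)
Your proof is correct and follows essentially the same route as the paper. The paper uses the same two-element test set, one vertex in $C$ and one outside $N^*(u)\cup C$. It likewise uses Lemma~\ref{lem:vertex-conjugates} and the normalizer formula to get $g\in G_{N^*(c')}$ and $s^{-1}g\in G_{N^*(c)}$, and it derives the contradiction from $u\notin N^*(c)\cup N^*(c')$; the only difference is that the paper argues via supports where you retract onto $G_u*G_c*G_{c'}$.

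Two small points, neither affecting validity. What you actually obtain is $s^{-1}\in G_{N^*(c)}G_{N^*(c')}$, i.e.\ $s\in G_{N^*(c')}G_{N^*(c)}$ rather than the order you wrote, which is harmless. Also, applying $\rho_{\{u\}}$ alone already gives the contradiction, so the non-adjacency of $c$ and $c'$ is not needed.
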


\begin{proof}
Let $\pi=\pi(u,s,C)$ be a non-entire partial conjugation.  Choose
$x\in C$ and $y\in V(\Gamma)\setminus\bigl(N^*(u)\cup C\bigr)$.
Pick $1\neq g_x\in G_x$ and $1\neq g_y\in G_y$, and suppose that
$\pi$ agrees with $\ad(g)\tau$, where $g\in G$ and $\tau\in\FAut(G)$, on
the finite set $\{g_x,g_y\}$.  We show $s=1$, and therefore $\pi$ is trivial.

The automorphism $\pi$ fixes $G_y$. Since $\tau$ preserves $G_y$ and
$\pi(g_y)=\ad(g)\tau(g_y)\in G_y$,
Lemma \ref{lem:vertex-conjugates} implies $g$ normalizes $G_y$. Therefore, Fact~\ref{fact:parabolic-basics}(2) gives
$g\in G_{N^*(y)}$.
As $u\notin N^*(y)$, it follows that $u\notin\supp(g)$.
On the other hand, $\pi(g_x)=\ad(g)\tau(g_x)\in sG_xs^{-1}$,
Lemma \ref{lem:vertex-conjugates} similarly implies $g G_x g^{-1} = sG_xs^{-1}$. Therefore, $s^{-1}g\in N_G(G_x)=G_{N^*(x)}$. If $s\neq1$, then
$u\notin\supp(g)$, and the normal-form theorem gives
$u\in\supp(s)\subseteq \supp(s^{-1}g)\subseteq N^*(x)$, contradicting
$u\notin N^*(x)$.  Thus $s=1$, and hence $\pi$ is trivial.
\end{proof}

\begin{theorem}\label{thm:exact-special}
The following are equivalent.
\begin{enumerate}[(a)]
\item $\Gamma$ is star-connected.
\item $\ASpe(G)=\cl{\Inn(G)\FAut(G)}$.
\item $\ISpe(G)=\cl{\Inn(G)\FInn(G)}^{\ISpe(G)}$.
\end{enumerate}

Moreover, if $\Gamma$ has a finite star base, then the following are equivalent.
\begin{enumerate}[(1)]
\item $\Gamma$ is star-connected.
\item $\ASpe(G)=\Inn(G)\FAut(G)$.
\item $\ISpe(G)=\Inn(G)\FInn(G)$.
\end{enumerate}
\end{theorem}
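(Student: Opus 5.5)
The plan is to prove the first group of equivalences via the cycle (a)$\Rightarrow$(b),(c) and then not-(a)$\Rightarrow$ not-(b), not-(c). The second group will follow by combining the first with closedness.

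\emph{(a) implies (b) and (c).} This is exactly Theorem~\ref{thm:special-density}, so nothing new is needed.

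\emph{Failure of (a) forces failure of (b) and (c).} Suppose $\Gamma$ is not star-connected. Then there is $u$ such that $\Gamma\setminus N^*(u)$ has at least two components. Pick one component $C$ and $1\neq s\in G_u$, and set $\pi=\pi(u,s,C)$.
\begin{itemize}
\item[(i)] $\pi$ is non-trivial. Taking $x\in C$ and $1\neq g_x\in G_x$, Lemma~\ref{lem:reduced-conjugate} shows $sg_xs^{-1}\neq g_x$, since $u\notin N^*(x)$.
\item[(ii)] $\pi$ is non-entire, because $C$ omits another component.
\item[(iii)] $\pi\in\ISpe(G)\leq\ASpe(G)$.
\end{itemize}
Lemma~\ref{lem:nonentire-obstruction} gives $\pi\notin\cl{\Inn(G)\FAut(G)}$, so (b) fails. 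Since $\Inn(G)\FInn(G)\leq\Inn(G)\FAut(G)$, the closure of the former, and a fortiori its relative closure in $\ISpe(G)$, is contained in $\cl{\Inn(G)\FAut(G)}$. Hence $\pi$ also witnesses the failure of (c).

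\emph{The finite star base case.} Now assume $\Gamma$ has a finite star base.

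For (1)$\Rightarrow$(2),(3), combine (b) and (c) with Proposition~\ref{prop:products-closed-fsb}. That proposition says $\Inn(G)\FAut(G)$ is closed in $\Aut(G)$, so it equals its closure. It also says $\Inn(G)\FInn(G)$ is closed in $\ISpe(G)$, so it equals its relative closure there.

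For (2)$\Rightarrow$(1) and (3)$\Rightarrow$(1), note that equality without closures implies the corresponding equality with closures. For (2), the closure of $\Inn(G)\FAut(G)$ lies in $\ASpe(G)$ because $\ASpe(G)$ is closed (Theorem~\ref{thm:special-closedness}). For (3), the relative closure lies in $\ISpe(G)$ by definition. Thus (2) gives (b) and (3) gives (c), and the first part yields (1).

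The main care point is the non-triviality of $\pi$ in step (i), that is, the existence of some $s\neq1$ with $s$ actually moving $G_x$. This is immediate from the normal form, since $u$ and $x$ are non-adjacent. The rest is bookkeeping about closures, where the only things to check are that relative closure in $\ISpe(G)$ is smaller than closure in $\Aut(G)$ and that $\ASpe(G)$ is closed.
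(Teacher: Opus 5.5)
Your proposal is correct and follows essentially the same route as the paper: density from Theorem~\ref{thm:special-density}, the non-entire partial conjugation $\pi(u,s,C)$ with Lemma~\ref{lem:nonentire-obstruction} for the converse, and Proposition~\ref{prop:products-closed-fsb} to remove the closures under a finite star base. Two of your steps are left implicit in the paper, and both are valid: the non-triviality check for $\pi$ via Lemma~\ref{lem:reduced-conjugate}, and the use of closedness of $\ASpe(G)$ to pass from (2) to (b).
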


\begin{proof}
If $\Gamma$ is star-connected, Theorem~\ref{thm:special-density} gives the
two density statements.  Thus (a) implies (b) and (c).

Suppose that $\Gamma$ is not star-connected.  Then for some $u$ the graph
$\Gamma\setminus N^*(u)$ has at least two connected components.  Choose a
non-trivial $s\in G_u$ and let $C$ be one, but not all, of those components.
The partial conjugation $\pi(u,s,C)$ belongs to $\ISpe(G)$, and hence to
$\ASpe(G)$, but Lemma~\ref{lem:nonentire-obstruction} shows that it does not
belong to $\cl{\Inn(G)\FAut(G)}$.  Thus (b) fails.  Since
$\cl{\Inn(G)\FInn(G)}^{\,\ISpe(G)}
\subseteq \cl{\Inn(G)\FAut(G)}$,
condition (c) fails as well.  Hence both (b) and (c) imply (a).

If $\Gamma$ has a finite star base and is star-connected, then Proposition~\ref{prop:products-closed-fsb}
turns both density statements into equality.  Thus (1) implies (2) and (3).

The reverse implications, (2)$\Rightarrow$(1) and (3)$\Rightarrow$(1), follow respectively from (b)$\Rightarrow$(a) and (c)$\Rightarrow$(a).
\end{proof}

The following proof is a slight modification of the implication
(2)$\Rightarrow$(1) in Proposition~\ref{prop:irreducible-fsb}, allowing
$\Gamma$ to be reducible.

\begin{lemma}\label{lem:inn-discrete-implies-fsb}
If $\Inn(G)$ is discrete in $\Aut(G)$, then $\Gamma$ has a finite star base.
\end{lemma}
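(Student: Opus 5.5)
By Lemma~\ref{lem:inn-discrete-centralizer}, discreteness of $\Inn(G)$ gives a finite set $F\subseteq G$ with $C_G(F)=Z(G)$. By Fact~\ref{fact:parabolic-basics}(4), $Z(G)=\bigoplus_{v\in Z_\Gamma}Z(G_v)\leq G_{Z_\Gamma}$. We now imitate the proof of (2)$\Rightarrow$(1) in Proposition~\ref{prop:irreducible-fsb}. Put $A_0=\supp(F)$, which is finite. For each $a\in A_0\setminus Z_\Gamma$ choose a vertex $a'\notin N^*(a)$; such a vertex exists because $a$ is not universal. Set
\[
A=A_0\cup\{a':a\in A_0\setminus Z_\Gamma\}.
\]
This set is finite, and the claim is that it is a star base, i.e.\ $\bigcap_{a\in A}N^*(a)=Z_\Gamma$.

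The inclusion $Z_\Gamma\subseteq\bigcap_{a\in A}N^*(a)$ is automatic, since a universal vertex lies in every star. For the reverse inclusion, let $v\notin Z_\Gamma$; we must find an element of $A$ not adjacent to $v$ and distinct from it.

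\emph{Case $v\in A_0$.} Since $v$ is not universal, the chosen vertex $v'$ lies in $A$ and $v\notin N^*(v')$.

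\emph{Case $v\notin A_0$.} This is where centrality enters; it is the step where reducibility differs from Proposition~\ref{prop:irreducible-fsb}, which used vertex-fullness to get $Z=1$. Pick $1\neq x\in G_v$. Because $v\notin Z_\Gamma$, the element $x$ is not in $G_{Z_\Gamma}$: by the normal form, a single syllable in $G_v$ has support $\{v\}\not\subseteq Z_\Gamma$. Hence $x\notin Z(G)=C_G(F)$, so there is $f\in F$ with $[x,f]\neq1$. Now suppose $v$ were adjacent to every vertex of $\supp(f)$. Note $v\notin\supp(f)$, since $v\notin A_0$. Then $x$ would commute with every syllable of a reduced word for $f$, hence with $f$, a contradiction. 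So some $a\in\supp(f)\subseteq A_0\subseteq A$ satisfies $v\notin N(a)$. Since also $v\neq a$, we get $v\notin N^*(a)$.

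In both cases $v\notin\bigcap_{a\in A}N^*(a)$, so $A$ is a finite star base. I expect no serious obstacle. The only points needing care are the case split on $Z_\Gamma$ when choosing non-neighbours, and the observation that $x\notin Z(G)$ follows from $v\notin Z_\Gamma$ via Fact~\ref{fact:parabolic-basics}(4) and support considerations. If $\Gamma$ is complete, $A=\varnothing$ is allowed and the argument is vacuous.
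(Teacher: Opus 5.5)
Your proof is correct and follows the paper's argument essentially step for step: you use the same finite set $A=A_0\cup\{a':a\in A_0\setminus Z_\Gamma\}$ built from $A_0=\supp(F)$, the same case split, and the same syllable-commutation contradiction. The only cosmetic difference is how you show $x\notin Z(G)$: you use $Z(G)\leq G_{Z_\Gamma}$ from Fact~\ref{fact:parabolic-basics}(4), whereas the paper directly exhibits a non-trivial element of $G_y$, for a non-neighbour $y$ of $v$, that does not commute with $x$.
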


\begin{proof}
By Lemma~\ref{lem:inn-discrete-centralizer}, choose a finite set
$F\subseteq G$ such that $C_G(F)=Z(G)$, and put
$A_0=\supp(F)$.  For every non-universal vertex
$a\in A_0\setminus Z_\Gamma$, choose $a'\notin N^*(a)$, and set
$$A=A_0\cup\{a':a\in A_0\setminus Z_\Gamma\}.$$
We show that every non-universal vertex has a non-neighbor in $A$.  This is
clear for a non-universal $v\in A_0$, using $v'$.  Suppose that
$v\notin A_0$ is non-universal, and choose $1\neq x\in G_v$.  The element
$x$ is not central in $G$: choose a non-neighbor $y$ of $v$ and a
non-trivial element of $G_y$, which does not commute with $x$ by normal
forms.  Hence $x\notin C_G(F)$, and some $f\in F$ does not commute with
$x$.  If $v$ were adjacent to every vertex in $\supp(f)$, then $x$ would
commute with every syllable of $f$, a contradiction.  Thus some vertex of
$A_0$ is a non-neighbor of $v$.  It follows that
$\bigcap_{a\in A}N^*(a)=Z_\Gamma$,
so $A$ is a finite star base.
\end{proof}

\begin{lemma}\label{lem:product-discrete}
Let $(X_i)_{i\in I}$ be nonempty topological spaces.  The product
$X=\prod_{i\in I}X_i$ is discrete if and only if every $X_i$ is discrete and
$|X_i|=1$ for all but finitely many $i$.
\end{lemma}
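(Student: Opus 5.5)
The plan is to prove both directions directly from the definition of the product topology, whose basic open sets are cylinders $\prod_{i\in I}U_i$ with $U_i$ open in $X_i$ and $U_i=X_i$ for all but finitely many $i$.

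For the direction ``if'', assume every $X_i$ is discrete and let $J\subseteq I$ be the finite set of indices with $|X_i|>1$. Given a point $x=(x_i)_{i\in I}\in X$, take $U_i=\{x_i\}$ for $i\in J$ and $U_i=X_i$ otherwise. Each $\{x_i\}$ is open since $X_i$ is discrete, and only finitely many $U_i$ differ from $X_i$, so this cylinder is open. For $i\notin J$ we have $X_i=\{x_i\}$, so the cylinder equals $\{x\}$. Every singleton is therefore open, and $X$ is discrete.

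For the direction ``only if'', assume $X$ is discrete. Since every $X_i$ is nonempty, $X$ is nonempty; fix a point $x\in X$. The singleton $\{x\}$ is open, so it contains a basic cylinder $\prod_i U_i$ containing $x$, with $U_i=X_i$ for all $i$ outside some finite set $J$. Containment in $\{x\}$ forces $U_i=\{x_i\}$ for every $i$, so $X_i=\{x_i\}$ for all $i\notin J$; this gives the finiteness condition. For discreteness of a fixed $X_i$, note that the coordinate embedding $X_i\to X$, obtained by freezing the other coordinates at those of $x$, is continuous and injective. Its image is the subspace $\{y\in X: y_k=x_k \text{ for } k\neq i\}$, which is discrete as a subspace of a discrete space, and the embedding is a homeomorphism onto this image because the projection $p_i$ inverts it continuously. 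Hence $X_i$ is discrete. Equivalently, $p_i$ is an open map, so $p_i(\{y\})$ is open for every $y$.

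No step is a real obstacle. The only point that needs care is that nonemptiness of all the $X_i$ is used twice: to have a basepoint $x$ at all, and to guarantee that every point of each $X_i$ occurs as a coordinate of some point of $X$, which is what makes the argument via $p_i$ or the coordinate embedding valid.
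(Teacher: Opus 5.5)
Your proof is correct and follows essentially the same route as the paper: a basic neighborhood contained in $\{x\}$ restricts only finitely many coordinates, which forces every unrestricted factor to be a singleton, and conversely a singleton is an open cylinder once the finitely many non-singleton factors are discrete. For discreteness of each factor you mainly use the slice embedding $X_i\to X$, while the paper uses that the coordinate projection is an open surjection; you also note the paper's argument as an alternative, so there is no real difference.
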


\begin{proof}
Suppose first that $X$ is discrete.  Every factor $X_i$ is discrete because
the coordinate projection $X\to X_i$ is an open surjection.  Fix
$x\in X$ and choose a basic open neighborhood of $x$ contained in
$\{x\}$.  Such a neighborhood restricts only finitely many coordinates.
Hence every unrestricted factor must be a singleton, so $|X_i|=1$ for all
but finitely many $i$.

Conversely, after discarding the singleton factors, $X$ is a finite product
of discrete spaces, and is therefore discrete.
\end{proof}

The group $\Inn(G)$ is normal in $\Aut(G)$, while $\FInn(G)$ and
$\FAut(G)$ are subgroups.  Therefore both products
$\Inn(G)\FInn(G)$ and $\Inn(G)\FAut(G)$
are subgroups of $\Aut(G)$.

\begin{theorem}\label{thm:pinn-discrete}
The following are equivalent.
\begin{enumerate}[(1)]
\item $\Inn(G)\FInn(G)$ is discrete in $\Aut(G)$.
\item $\Gamma$ has a finite star base, every $\Inn(G_v)$ is discrete in
$\Aut(G_v)$, and $\Inn(G_v)=1$ for all but finitely many vertices $v$.
\end{enumerate}
\end{theorem}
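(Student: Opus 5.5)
For (1)$\Rightarrow$(2), I will use that a subgroup of a discrete group is discrete. Since $\Inn(G)\leq\Inn(G)\FInn(G)$, the group $\Inn(G)$ is discrete in $\Aut(G)$. Lemma~\ref{lem:inn-discrete-implies-fsb} then gives a finite star base. Likewise $\FInn(G)$ is discrete. By Lemma~\ref{lem:factorwise-product-topology}, its topology is the product topology on $\prod_v\Inn(G_v)$, where each $\Inn(G_v)$ carries the subspace topology from $\Aut(G_v)$. Lemma~\ref{lem:product-discrete} then shows that every $\Inn(G_v)$ is discrete and that $\Inn(G_v)=1$ for all but finitely many $v$.

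For (2)$\Rightarrow$(1), let $A$ be a finite star base. Let $V_0$ be the finite set of vertices with $\Inn(G_v)\neq1$. For each $v\in V_0$, discreteness of $\Inn(G_v)$ gives, via Lemma~\ref{lem:inn-discrete-centralizer}, a finite set $F_v\subseteq G_v$ with $C_{G_v}(F_v)=Z(G_v)$. Put
\[
F=\{x_a:a\in A\}\cup\bigcup_{v\in V_0}F_v,
\]
where $1\neq x_a\in G_a$. Let $\phi=\ad(g)\tau\in\Inn(G)\FInn(G)$ fix $F$ pointwise; it suffices to show $\phi=1$.

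I will argue in three steps.
\begin{enumerate}[(i)]
\item As in Proposition~\ref{prop:products-closed-fsb}, from $\phi(x_a)=x_a\in G_a\cap gG_ag^{-1}$ and Lemma~\ref{lem:vertex-conjugates} we get $g\in\bigcap_{a\in A}G_{N^*(a)}=G_{Z_\Gamma}$. Lemma~\ref{lem:universal-inner-factorwise} then gives $\ad(g)\in\FInn(G)$, so $\phi\in\FInn(G)$.
\item Writing $\phi=(\ad(c_v))_v$ with $c_v\in G_v$, the factor $\phi|_{G_v}$ is trivial automatically for $v\notin V_0$.
\item For $v\in V_0$, $\phi|_{G_v}=\ad(c_v)$ fixes $F_v$, so $c_v\in C_{G_v}(F_v)=Z(G_v)$, hence $\phi|_{G_v}=1$.
\end{enumerate}
Therefore $\phi=1$. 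Thus the pointwise stabilizer of $F$ meets $\Inn(G)\FInn(G)$ trivially, and this subgroup is discrete.

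The main obstacle is step (i): the inner part must be absorbed into $\FInn(G)$ even when $Z_\Gamma\neq\varnothing$. This is handled by Lemma~\ref{lem:universal-inner-factorwise}, and the rest is bookkeeping.
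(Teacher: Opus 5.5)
Your proposal is correct and follows the paper's proof essentially step for step. For (1)$\Rightarrow$(2) you use discreteness of subgroups together with Lemma~\ref{lem:inn-discrete-implies-fsb}, Lemma~\ref{lem:factorwise-product-topology} and Lemma~\ref{lem:product-discrete}; for the converse you use the same finite set $F$, the reduction to $g\in G_{Z_\Gamma}$ via Lemma~\ref{lem:vertex-conjugates}, and the absorption of $\ad(g)$ into $\FInn(G)$ by Lemma~\ref{lem:universal-inner-factorwise}. The only cosmetic difference is that you phrase the local step through $C_{G_v}(F_v)=Z(G_v)$ rather than through a finite set with trivial pointwise stabilizer in $\Inn(G_v)$, and these are equivalent by Lemma~\ref{lem:inn-discrete-centralizer}.
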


\begin{proof}
Assume (1).  Both $\Inn(G)$ and $\FInn(G)$ are subgroups of the discrete
group $\Inn(G)\FInn(G)$, hence are discrete.  By
Lemma~\ref{lem:inn-discrete-implies-fsb}, $\Gamma$ has a finite star base.
By Lemma~\ref{lem:factorwise-product-topology} and
Lemma~\ref{lem:product-discrete}, every $\Inn(G_v)$ is discrete and all but
finitely many of them are trivial.  This proves (2).

Conversely, assume (2).  Let $A$ be a finite star base, and choose
$1\neq x_a\in G_a$ for every $a\in A$.  Let
$W=\{v\in V(\Gamma):\Inn(G_v)\neq1\}$;
this is finite.  For every $w\in W$, discreteness of $\Inn(G_w)$ gives a
finite set $F_w\subseteq G_w$ whose pointwise stabilizer in $\Inn(G_w)$ is
trivial.  Put $$F=\{x_a:a\in A\}\cup\bigcup_{w\in W}F_w.$$
We prove that the pointwise stabilizer of $F$ in
$\Inn(G)\FInn(G)$ is trivial.

Let $\phi=\ad(g)\tau$, with $\tau\in\FInn(G)$, and suppose that $\phi$
fixes $F$ pointwise.  For every $a\in A$, the non-trivial element
$\tau(x_a)$ belongs to $G_a$, while $g\tau(x_a)g^{-1}=x_a\in G_a$.
Thus $gG_ag^{-1}\cap G_a\neq1$, and
Lemma~\ref{lem:vertex-conjugates} gives
$gG_ag^{-1}=G_a$.  Hence
\[
 g\in\bigcap_{a\in A}N_G(G_a)
   =G_{\bigcap_{a\in A}N^*(a)}=G_{Z_\Gamma}.
\]
By Lemma~\ref{lem:universal-inner-factorwise}, $\ad(g)\in\FInn(G)$.  Thus
$\phi$ itself belongs to $\FInn(G)$.  For every $w\in W$, the $w$-coordinate of $\phi$ fixes $F_w$ and is
therefore the identity.  At every vertex outside $W$, the local inner
automorphism group is trivial.  Hence $\phi=1$.  The finite set
$F$ isolates the identity, proving (1).
\end{proof}

\begin{remark}
The group $\Inn(G_v)$ is trivial exactly when $G_v$ is abelian.  It is
discrete exactly when some finite $F_v\subseteq G_v$ satisfies
$C_{G_v}(F_v)=Z(G_v)$, by
Lemma~\ref{lem:inn-discrete-centralizer}.
\end{remark}

Analogously, we prove the following.

\begin{theorem}\label{thm:paut-discrete}
The following are equivalent.
\begin{enumerate}[(1)]
\item $\Inn(G)\FAut(G)$ is discrete in $\Aut(G)$.
\item $\Gamma$ has a finite star base, every $\Aut(G_v)$ is discrete in the
pointwise-convergence topology, and $\Aut(G_v)=1$ for all but finitely many
vertices $v$.
\end{enumerate}
\end{theorem}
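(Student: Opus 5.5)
The proof will mirror that of Theorem~\ref{thm:pinn-discrete}, with $\FAut(G)$ in place of $\FInn(G)$ and $\Aut(G_v)$ in place of $\Inn(G_v)$.

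For (1)$\Rightarrow$(2), both $\Inn(G)$ and $\FAut(G)$ are subgroups of the discrete group $\Inn(G)\FAut(G)$, so both are discrete. Lemma~\ref{lem:inn-discrete-implies-fsb} then gives a finite star base. By Lemma~\ref{lem:factorwise-product-topology}, $\FAut(G)$ is homeomorphic to $\prod_v\Aut(G_v)$ with the product topology. Lemma~\ref{lem:product-discrete} therefore shows that every $\Aut(G_v)$ is discrete and that $\Aut(G_v)=1$ for all but finitely many $v$.

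For (2)$\Rightarrow$(1), let $A$ be a finite star base and choose $1\neq x_a\in G_a$ for each $a\in A$. The set $W=\{v:\Aut(G_v)\neq1\}$ is finite. For each $w\in W$, discreteness of $\Aut(G_w)$ gives a finite set $F_w\subseteq G_w$ whose pointwise stabilizer in $\Aut(G_w)$ is trivial. Put $F=\{x_a:a\in A\}\cup\bigcup_{w\in W}F_w$, and suppose $\phi=\ad(g)\tau$, with $\tau\in\FAut(G)$, fixes $F$ pointwise.

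First we show $g\in G_{Z_\Gamma}$. For each $a\in A$, $\tau(x_a)$ is a non-trivial element of $G_a$, and $g\tau(x_a)g^{-1}=x_a\in G_a$. Hence $gG_ag^{-1}\cap G_a\neq1$, and Lemma~\ref{lem:vertex-conjugates} gives $g\in N_G(G_a)=G_{N^*(a)}$. Intersecting over $a\in A$ and using the star base property gives $g\in G_{Z_\Gamma}$.

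Lemma~\ref{lem:universal-inner-factorwise} then gives $\ad(g)\in\FInn(G)\leq\FAut(G)$, so $\phi\in\FAut(G)$. For each $w\in W$, the $w$-coordinate of $\phi$ fixes $F_w$, so it is trivial. At every vertex outside $W$ the coordinate is trivial because $\Aut(G_v)=1$. Hence $\phi=1$, so $F$ isolates the identity and $\Inn(G)\FAut(G)$ is discrete.

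The only step needing care is the passage from $g\in G_{Z_\Gamma}$ to $\ad(g)\in\FAut(G)$, which is exactly Lemma~\ref{lem:universal-inner-factorwise}. After that, $\phi$ lies in the product $\prod_v\Aut(G_v)$ and is controlled coordinatewise.
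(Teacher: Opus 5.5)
Your proposal is correct and follows essentially the same route as the paper. For (1)$\Rightarrow$(2) you pass to the discrete subgroups $\Inn(G)$ and $\FAut(G)$ and use Lemmas~\ref{lem:inn-discrete-implies-fsb}, \ref{lem:factorwise-product-topology} and~\ref{lem:product-discrete}; for (2)$\Rightarrow$(1) you isolate the identity with $F=\{x_a:a\in A\}\cup\bigcup_{w\in W}F_w$, force $g\in G_{Z_\Gamma}$, and then argue coordinatewise in $\FAut(G)$ via Lemma~\ref{lem:universal-inner-factorwise}, exactly as the paper does.
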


\begin{proof}
If (1) holds, then $\Inn(G)$ and $\FAut(G)$ are discrete subgroups.
Lemma~\ref{lem:inn-discrete-implies-fsb} gives a finite star base, while
Lemmas~\ref{lem:factorwise-product-topology} and~\ref{lem:product-discrete}
give all the local assertions.

Conversely, let $A$ be a finite star base and choose
$1\neq x_a\in G_a$ for $a\in A$.  Let
$W=\{v\in V(\Gamma):\Aut(G_v)\neq1\}$,
which is finite.  For every $w\in W$, choose a finite set
$F_w\subseteq G_w$ whose pointwise stabilizer in $\Aut(G_w)$ is trivial.
Such a set exists because $\Aut(G_w)$ is discrete.  Set
$$F=\{x_a:a\in A\}\cup\bigcup_{w\in W}F_w.$$

Suppose $\phi=\ad(g)\tau$, with $\tau\in\FAut(G)$, fixes $F$ pointwise.
Exactly as in the proof of Theorem~\ref{thm:pinn-discrete}, fixing the
non-trivial elements $x_a$ forces
$gG_ag^{-1}=G_a$ for every $a\in A$, and hence
$g\in G_{Z_\Gamma}$.  Therefore $\ad(g)\in\FInn(G)\leq\FAut(G)$, so
$\phi\in\FAut(G)$.  Its $w$-coordinate fixes $F_w$ for every $w\in W$ and
is therefore trivial; all remaining coordinate groups are already trivial.
Thus $\phi=1$, and the product group is discrete.
\end{proof}

\begin{remark}
The group $\Aut(G_v)$ is discrete precisely when a finite subset of $G_v$
determines every automorphism by its pointwise values.  In particular, this
holds whenever $G_v$ is finitely generated.  A non-trivial group has trivial
automorphism group only when it is $C_2$: triviality of inner automorphisms
forces the group to be abelian, inversion then forces exponent two, and a
vector space over $\mathbb F_2$ of dimension greater than one has non-trivial
linear automorphisms.
\end{remark}

\begin{corollary}\label{cor:special-discrete}
Assume that $\Gamma$ is star-connected and has a finite star base.  Then
$\ISpe(G)$ is discrete if and only if every $\Inn(G_v)$ is discrete and all
but finitely many are trivial.  Likewise, $\ASpe(G)$ is discrete if and only
if every $\Aut(G_v)$ is discrete and all but finitely many are trivial.
\end{corollary}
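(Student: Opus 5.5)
The plan is to reduce the corollary to the exact-equality part of Theorem~\ref{thm:exact-special} and then apply the two discreteness criteria, Theorems~\ref{thm:pinn-discrete} and~\ref{thm:paut-discrete}.

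First, since $\Gamma$ is star-connected and has a finite star base, the implication (1)$\Rightarrow$(2),(3) of Theorem~\ref{thm:exact-special} gives
\[
\ISpe(G)=\Inn(G)\FInn(G), \qquad \ASpe(G)=\Inn(G)\FAut(G).
\]
These are equalities of subgroups of $\Aut(G)$, and both sides carry the subspace topology. Hence discreteness of $\ISpe(G)$ is literally discreteness of $\Inn(G)\FInn(G)$, and likewise for $\ASpe(G)$ and $\Inn(G)\FAut(G)$.

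Second, Theorem~\ref{thm:pinn-discrete} says that $\Inn(G)\FInn(G)$ is discrete if and only if three conditions hold: $\Gamma$ has a finite star base, every $\Inn(G_v)$ is discrete, and all but finitely many $\Inn(G_v)$ are trivial. The finite star base condition is already part of the hypothesis, so the criterion reduces exactly to the two local conditions in the statement. This settles the $\ISpe$ assertion. Applying Theorem~\ref{thm:paut-discrete} in the same way to $\Inn(G)\FAut(G)$ settles the $\ASpe$ assertion.

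There is no real obstacle here. The only point to check is that "discrete" means the same thing in every statement being combined, namely discreteness in the subspace topology from $\Aut(G)$. Both Theorem~\ref{thm:pinn-discrete} and Theorem~\ref{thm:paut-discrete} are phrased for subgroups of $\Aut(G)$, so they apply directly. No closure is involved, because the equalities from Theorem~\ref{thm:exact-special} are exact.
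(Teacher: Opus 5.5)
Your proposal is correct and matches the paper's proof: under star-connectedness and a finite star base, Theorem~\ref{thm:exact-special} gives the exact equalities $\ISpe(G)=\Inn(G)\FInn(G)$ and $\ASpe(G)=\Inn(G)\FAut(G)$. The discreteness criteria of Theorems~\ref{thm:pinn-discrete} and~\ref{thm:paut-discrete} then reduce to the local conditions, since the finite star base hypothesis is already assumed.
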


\begin{proof}
Apply Theorem~\ref{thm:exact-special} and then
Theorems~\ref{thm:pinn-discrete} and~\ref{thm:paut-discrete}.
\end{proof}

\section{Finite component witness sets}\label{sec:component-witness}

Just as finite star bases served as the key ingredient in determining discreteness and closedness of special subgroups for star-connected graphs, finite component witness sets determine the same properties for general graphs. In this section, we prove results analogous to the previous section for general graphs. To do so, we rely on the following technical lemmas.

For a subset $K\subseteq G$, put
$\supp(K)=\bigcup_{k\in K}\supp(k)$.

\begin{lemma}\label{lem:minimal-conjugator}
Suppose that $p$ has minimal syllable length in the right coset $pG_{N^*(x)}$.  Then
$\last(p)\cap N^*(x)=\varnothing$
and
\begin{equation}\label{eq:support-conjugate-factor}
 \supp(pG_xp^{-1})=\supp(p)\cup\{x\}.
\end{equation}
\end{lemma}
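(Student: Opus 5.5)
The argument has two parts. First we prove the statement about $\last(p)$, using minimality. Then we derive the support formula from Lemma~\ref{lem:reduced-conjugate}.

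\emph{Step 1: $\last(p)\cap N^*(x)=\varnothing$.} Suppose some $w\in\last(p)\cap N^*(x)$ exists. Choose a reduced representative of $p$ ending in a syllable $t\in G_w$, and write $p=p't$ with $p'$ of syllable length one less than $p$. Since $w\in N^*(x)$, we have $t\in G_{N^*(x)}$, so $p'=pt^{-1}$ lies in the same coset $pG_{N^*(x)}$. This contradicts the minimality of the syllable length of $p$. The only routine point to check is that removing a last syllable from a reduced word leaves a reduced word of strictly smaller length. This holds by the normal-form theorem, since any reduced word for $p$ realising $w\in\last(p)$ can be arranged to end with the $w$-syllable.

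\emph{Step 2: the support formula.} The inclusion $\supp(pG_xp^{-1})\subseteq\supp(p)\cup\{x\}$ follows from Observation~\ref{eq:support-product} applied to $p\,h\,p^{-1}$, together with $\supp(p^{-1})=\supp(p)$. For the reverse inclusion, pick any $1\neq h\in G_x$; this is possible because the vertex groups are non-trivial. By Step 1 the hypothesis of Lemma~\ref{lem:reduced-conjugate} holds, so $\supp(php^{-1})=\supp(p)\cup\{x\}$. Hence the union over $pG_xp^{-1}$ contains $\supp(p)\cup\{x\}$, and the two sets are equal. Here $\supp$ of a subgroup is interpreted as the union of supports of its elements, as in the notation introduced just before the lemma.

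There is no serious obstacle. The only delicate point is Step 1: we must make sure that the element obtained by deleting the final $w$-syllable really has strictly smaller syllable length, and that this length is computed in $G$ rather than as the length of a particular word. Both follow directly from the normal-form theorem cited earlier.
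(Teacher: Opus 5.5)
Your proof is correct and follows the paper's argument. Step~1 is exactly the paper's deletion of a final $G_w$-syllable to contradict minimality in the coset, and Step~2 rests on Lemma~\ref{lem:reduced-conjugate}; the only cosmetic difference is that you get the inclusion $\supp(pG_xp^{-1})\subseteq\supp(p)\cup\{x\}$ from Observation~\ref{eq:support-product}, whereas the paper applies Lemma~\ref{lem:reduced-conjugate} to every non-trivial element of $G_x$ (the identity contributes nothing), which already gives the equality element by element.
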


\begin{proof}
If a vertex $z\in\last(p)\cap N^*(x)$ existed, a reduced representative of
$p$ could be chosen with final syllable $t\in G_z$.  Since
the whole group $G_z$ lies in $G_{N^*(x)}$, the element $pt^{-1}$ would lie
in the same right coset and would have smaller syllable length, contrary to
minimality.  The support formula now follows from
Lemma~\ref{lem:reduced-conjugate}, applied to every non-trivial element of
$G_x$.
\end{proof}

The following lemma extends the right-angled Coxeter statement
\cite[Lemma~2.17]{HyttinenPaolini2019} to arbitrary graph products.  The
right-angled Coxeter version is also the support-propagation input used in
\cite[Fact~5.5]{PaoliniRabideau2026}.

\begin{lemma}\label{lem:support-propagation}
Let $\sigma\in\ASpe(G)$, let $v\in V(\Gamma)$, and let $C$ be a connected
component of $\Gamma\setminus N^*(v)$.  If $x,y\in C$ and $
 v\in \supp(\sigma(G_x))$,
then $v\in \supp(\sigma(G_y))$.
\end{lemma}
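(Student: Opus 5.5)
We argue along a path in $C$, so it suffices to treat one edge. Since $C$ is connected, choose a path $x=z_0,z_1,\dots,z_k=y$ in $\Gamma\setminus N^*(v)$. Then consecutive vertices are adjacent in $\Gamma$ and none lies in $N^*(v)$. By induction on $k$, it is enough to prove the following: if $x,y\notin N^*(v)$ are adjacent in $\Gamma$ and $v\in\supp(\sigma(G_x))$, then $v\in\supp(\sigma(G_y))$.

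Write $\sigma(G_x)=pG_xp^{-1}$ and $\sigma(G_y)=qG_yq^{-1}$. By Fact~\ref{fact:parabolic-basics}(2), $G_{N^*(x)}=N_G(G_x)$. So we may replace $p$ by any element of $pG_{N^*(x)}$ without changing $\sigma(G_x)$, and we choose $p$ of minimal syllable length in this coset. We choose $q$ minimal in $qG_{N^*(y)}$ in the same way. Lemma~\ref{lem:minimal-conjugator} then gives
\[
\supp(\sigma(G_x))=\supp(p)\cup\{x\}
\quad\text{and}\quad
\supp(\sigma(G_y))=\supp(q)\cup\{y\}.
\]
Since $x\neq v$, the hypothesis gives $v\in\supp(p)$. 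Since $y\neq v$, it suffices to show $v\in\supp(q)$. We argue by contradiction and assume $v\notin\supp(q)$.

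Because $x$ and $y$ are adjacent, $G_x$ and $G_y$ commute elementwise. Hence so do their images $pG_xp^{-1}$ and $qG_yq^{-1}$. Using Fact~\ref{fact:parabolic-basics}(3), this gives
\[
qG_yq^{-1}\leq C_G(pG_xp^{-1})=p\,C_G(G_x)\,p^{-1}\leq pG_{N^*(x)}p^{-1},
\]
that is, $(p^{-1}q)G_y(p^{-1}q)^{-1}\leq G_{N^*(x)}$. Lemma~\ref{lem:parabolic-factorization}, applied with conjugator $p^{-1}q$, writes $p^{-1}q=hn$ with $h\in G_{N^*(x)}$ and $n\in G_{N^*(y)}$. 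Hence $p=qn^{-1}h^{-1}$.

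Now $v\notin N^*(x)\cup N^*(y)$, and we assumed $v\notin\supp(q)$. Observation~\ref{eq:support-product} therefore gives
\[
v\notin\supp(q)\cup\supp(n)\cup\supp(h)\supseteq\supp(p).
\]
This contradicts $v\in\supp(p)$, which completes the edge case and hence the induction.

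The main obstacle is the step where we choose minimal coset representatives. It is essential that supports of conjugates are computed exactly, via Lemma~\ref{lem:minimal-conjugator}, rather than merely bounded above. Without that choice, $v$ could appear in $\supp(p)$ only spuriously, through a syllable that could have been absorbed into the normalizer $G_{N^*(x)}$. The other step needing care is converting elementwise commutation of the two conjugates into the parabolic containment required by Lemma~\ref{lem:parabolic-factorization}. This is exactly what the centralizer formula of Fact~\ref{fact:parabolic-basics}(3) supplies.
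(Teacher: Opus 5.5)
Your proof is correct and follows essentially the paper's argument. You reduce to an edge of $C$, choose minimal representatives of the cosets $pG_{N^*(x)}$ and $qG_{N^*(y)}$ so that Lemma~\ref{lem:minimal-conjugator} computes the supports exactly, convert commutation of the images into $(p^{-1}q)G_y(p^{-1}q)^{-1}\leq G_{N^*(x)}$, factor $p^{-1}q=hn$ via Lemma~\ref{lem:parabolic-factorization}, and read off from $p=qn^{-1}h^{-1}$ that $v\in\supp(p)$ forces $v\in\supp(q)$, exactly as the paper does.
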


\begin{proof}
It is enough to prove the assertion when $x$ and $y$ are adjacent in
$\Gamma$, and then propagate along a path in $C$.

Choose $p_x,p_y\in G$ such that
$\sigma(G_x)=p_xG_xp_x^{-1}$ and
$\sigma(G_y)=p_yG_yp_y^{-1}$.
Choose $p_x$ and $p_y$ with minimal syllable length in the right cosets
$p_xG_{N^*(x)}$ and $p_yG_{N^*(y)}$, respectively.

Because $G_x$ and $G_y$ commute, their images under $\sigma$ commute.  Put
$q=p_x^{-1}p_y$.  After conjugating by $p_x^{-1}$, this says
$qG_yq^{-1}\leq C_G(G_x)\leq G_{N^*(x)}$.
Lemma~\ref{lem:parabolic-factorization} gives
$q=hn$ with $h\in G_{N^*(x)}$ and $n\in G_{N^*(y)}$.
Equivalently,
\begin{equation}\label{eq:conjugator-comparison}
 p_x=p_yn^{-1}h^{-1}.
\end{equation}

Since $x,y\in\Gamma\setminus N^*(v)$, we have
$v\notin N^*(x)\cup N^*(y)$.
By the hypothesis and~\eqref{eq:support-conjugate-factor},
$v\in\supp(p_x)$.  If
$v\notin\supp(\sigma(G_y)) = \supp(p_yG_yp_y^{-1})$, then, as $v\neq y$,
again by~\eqref{eq:support-conjugate-factor} we would have
$v\notin\supp(p_y)$.  Neither $h$ nor $n$ has a $v$-syllable.  Thus the
right-hand side of~\eqref{eq:conjugator-comparison} has no $v$ in its
support, by~\eqref{eq:support-product}, contradicting
$v\in\supp(p_x)$.  Thus $v\in\supp(\sigma(G_y))$.
\end{proof}

\begin{lemma}\label{lem:comp_witness_fixing}
Suppose that $\Gamma$ has a component witness set $S$ and that
$\sigma\in\ASpe(G)$ satisfies $\sigma(G_s)=G_s$ for every $s\in S$.  Then
$\sigma\in\FAut(G)$.
\end{lemma}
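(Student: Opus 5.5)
We show directly that $\sigma(G_v)=G_v$ for an arbitrary vertex $v$. The idea is to pick a conjugator for $\sigma(G_v)$ of minimal length and show it is trivial. If it were not, it would end in a syllable at some vertex $u$ not adjacent to $v$. Lemma~\ref{lem:support-propagation} would then carry $u$ from $\supp(\sigma(G_v))$ to $\supp(\sigma(G_s))=\{s\}$ for a suitable witness $s\in S$, which is impossible.

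Fix $v\in V(\Gamma)$. Since $\sigma\in\ASpe(G)$, there is $p\in G$ with $\sigma(G_v)=pG_vp^{-1}$. Any other element of the right coset $pG_{N^*(v)}=pN_G(G_v)$ gives the same conjugate, by Fact~\ref{fact:parabolic-basics}(2). So we may replace $p$ by an element of minimal syllable length in $pG_{N^*(v)}$. Lemma~\ref{lem:minimal-conjugator} then gives
\[
\last(p)\cap N^*(v)=\varnothing
\qquad\text{and}\qquad
\supp(\sigma(G_v))=\supp(p)\cup\{v\}.
\]
It suffices to show $p=1$, since then $\sigma(G_v)=G_v$. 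Because $v$ is arbitrary, this gives $\sigma\in\FAut(G)$.

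Suppose instead that $p\neq1$. Then $\last(p)\neq\varnothing$, so we may choose $u\in\last(p)$. By the display, $u\notin N^*(v)$: thus $u\neq v$ and $u$ is not adjacent to $v$. Equivalently, $v\notin N^*(u)$, so $v$ lies in some connected component $C$ of $\Gamma\setminus N^*(u)$. Since $S$ is a component witness set, there is $s\in C\cap S$. Moreover $u\in\supp(p)\subseteq\supp(\sigma(G_v))$.

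We now apply Lemma~\ref{lem:support-propagation} with the vertex $u$ in the role of its $v$, the component $C$, and the vertices $v,s\in C$. It gives $u\in\supp(\sigma(G_s))$. By hypothesis $\sigma(G_s)=G_s$, whose support is $\{s\}$, so $u=s$. But $s\in C\subseteq V(\Gamma)\setminus N^*(u)$, so $s\neq u$. This contradiction shows $p=1$.

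The main point requiring care is the choice of conjugator. Normalizing $p$ to be minimal in its coset is what guarantees that every vertex in $\last(p)$ is a genuine non-neighbour of $v$. It also ensures that $\supp(\sigma(G_v))$ records exactly $\supp(p)\cup\{v\}$, as Lemma~\ref{lem:minimal-conjugator} states. Once that normalization is in place, the argument is a single application of support propagation.
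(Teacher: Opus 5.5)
Your proof is correct and follows the paper's argument: take a conjugator $p$ of minimal length in $pG_{N^*(v)}$, pick $u\in\last(p)$, use Lemma~\ref{lem:minimal-conjugator} to get $u\notin N^*(v)$ and $u\in\supp(\sigma(G_v))$, and then apply Lemma~\ref{lem:support-propagation} to carry $u$ to a witness $s$ in the component of $\Gamma\setminus N^*(u)$ containing $v$. The only differences are cosmetic: you argue directly for each vertex rather than by contradiction, and you make explicit that a shortest conjugator is a minimal-length element of $pN_G(G_v)=pG_{N^*(v)}$, which is exactly the hypothesis Lemma~\ref{lem:minimal-conjugator} needs.
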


\begin{proof}
Suppose otherwise.  Choose $x\in V(\Gamma)$ with
$\sigma(G_x)\neq G_x$, and choose a shortest conjugator $g\in G$ such that
$\sigma(G_x)=gG_xg^{-1}$.  Then $g\neq1$; choose
$v\in\last(g)$.  By Lemma~\ref{lem:minimal-conjugator},
$v\notin N^*(x)$ and $v\in\supp(\sigma(G_x))$.

Let $C$ be the connected component of $\Gamma\setminus N^*(v)$ containing
$x$.  Since $S$ is a component witness set, choose $s\in C\cap S$.
Lemma~\ref{lem:support-propagation} gives
$v\in\supp(\sigma(G_s))$.  But $\sigma(G_s)=G_s$, so this support is
$\{s\}$.  Hence $v=s$, contradicting
$s\in\Gamma\setminus N^*(v)$.  Therefore $\sigma$ preserves every vertex
group setwise and belongs to $\FAut(G)$.
\end{proof}

With these technical lemmas established, we are now able to prove results analogous to those in the star-connected case. We note also that if $\Gamma$ has a finite component witness set $S$, then
each graph $\Gamma\setminus N^*(u)$ has only finitely many connected
components: distinct components meet $S$ in disjoint non-empty subsets, so
there are at most $|S|$ of them.  Consequently, every partial conjugation is
a finite product of single-component partial conjugations, and hence
$\PartConj(G)=\PartConjsc(G)$.

Also, the groups $\FAut(G)$ and $\FInn(G)$ normalize $\PartConj(G)$.  Indeed,
for $\alpha\in\FAut(G)$ one has
\[
 \alpha\,\pi(u,s,C)\,\alpha^{-1}=\pi(u,\alpha(s),C).
\]
Consequently, $\PartConj(G)\FAut(G)$ and
$\PartConj(G)\FInn(G)$ are subgroups of $\Aut(G)$.

\begin{proposition}\label{prop:product_closed}
If $\Gamma$ has a finite component witness set, then the following hold:
\begin{enumerate}[(1)]
\item $\PartConj(G)\FAut(G)$ is closed in $\Aut(G)$;
\item $\PartConj(G)\FInn(G)$ is closed in the relative topology of
$\ISpe(G)$.
\end{enumerate}
\end{proposition}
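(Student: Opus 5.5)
The plan is to imitate the proof of Proposition~\ref{prop:products-closed-fsb}, with the finite star base replaced by a finite component witness set $S$. The aim is to show that a convergent net in $\PartConj(G)\FAut(G)$ eventually lies in a single coset $\pi_0\FAut(G)$ with $\pi_0\in\PartConj(G)$. Closedness of that coset, from Proposition~\ref{prop:paut-closed}, then puts the limit in the product. Throughout, the key input is Lemma~\ref{lem:comp_witness_fixing}: an element of $\ASpe(G)$ fixing each $G_s$, $s\in S$, setwise lies in $\FAut(G)$.

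First I will carry out the stabilization step. Let $(\phi_i)$ be a net in $\PartConj(G)\FAut(G)$ converging to $\phi\in\Aut(G)$. Every $\phi_i$ lies in $\ASpe(G)$, so $\phi\in\ASpe(G)$ by Theorem~\ref{thm:special-closedness}(1). For each $s\in S$ I choose $1\neq x_s\in G_s$. Since $S$ is finite, for all sufficiently large $i,j$ we have $\phi_i(x_s)=\phi_j(x_s)$ simultaneously for all $s\in S$. Lemma~\ref{lem:vertex-conjugates} then gives $\phi_i(G_s)=\phi_j(G_s)$ for every $s\in S$.

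Next I fix such a $j$ and put $\psi_i=\phi_j^{-1}\phi_i\in\PartConj(G)\FAut(G)\leq\ASpe(G)$. By construction $\psi_i(G_s)=G_s$ for all $s\in S$, so Lemma~\ref{lem:comp_witness_fixing} gives $\psi_i\in\FAut(G)$. Hence $\phi_i\in\phi_j\FAut(G)$ for all large $i$. Writing $\phi_j=\pi_j\tau_j$ with $\pi_j\in\PartConj(G)$ and $\tau_j\in\FAut(G)$, this coset is $\pi_j\FAut(G)$. It is closed, being a translate of the closed subgroup $\FAut(G)$ (Proposition~\ref{prop:paut-closed}). Therefore $\phi\in\pi_j\FAut(G)\subseteq\PartConj(G)\FAut(G)$, which proves (1).

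For (2), I take a net in $\PartConj(G)\FInn(G)$ converging to $\phi\in\ISpe(G)$ and run the same argument. It shows that eventually $\phi_j^{-1}\phi_i\in\FAut(G)$. Since partial conjugations and $\FInn(G)$ lie in $\ISpe(G)$, we also have $\phi_j^{-1}\phi_i\in\ISpe(G)$, and \eqref{eq:pinn-intersection} then gives $\phi_j^{-1}\phi_i\in\FInn(G)$. So the net eventually lies in $\pi_j\FInn(G)$. This coset is closed in $\ISpe(G)$ because $\FInn(G)=\FAut(G)\cap\ISpe(G)$ is relatively closed, so the limit lies in $\PartConj(G)\FInn(G)$.

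I do not expect a genuine obstacle, since Lemma~\ref{lem:comp_witness_fixing} carries the real content. The one point to check is that $\phi_j^{-1}\phi_i$ lies in $\ASpe(G)$, which is what allows that lemma to be applied; this holds because $\PartConj(G)\FAut(G)$ is a subgroup of $\ASpe(G)$.
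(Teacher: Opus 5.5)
Your proposal is correct and follows essentially the same route as the paper. You stabilize the images $\phi_i(G_s)$ for $s$ in a finite component witness set via Lemma~\ref{lem:vertex-conjugates}, then apply Lemma~\ref{lem:comp_witness_fixing} to $\phi_j^{-1}\phi_i$ (the paper uses the inverse, $\phi_i^{-1}\phi_{i_0}$) to trap the tail of the net in a closed coset of $\FAut(G)$, and for part (2) you intersect with $\ISpe(G)$ via \eqref{eq:pinn-intersection}; your extra appeal to Theorem~\ref{thm:special-closedness}(1) is harmless but not needed.
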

\begin{proof}
Let $S$ be a finite component witness set. Suppose that a net
$\phi_i=\pi_i \tau_i\in\PartConj(G)\FAut(G)$
converges to $\phi\in\Aut(G)$. For every $s\in S$, choose
$1\neq x_s\in G_s$. For all sufficiently large $i,j$, simultaneously for
all $s\in S$, one has $\phi_i(x_s)=\phi_j(x_s)$. This nontrivial element lies in both $\phi_i(G_s)$ and $\phi_j(G_s)$. Since both of these sets are conjugates of $G_s$,  Lemma~\ref{lem:vertex-conjugates} gives $\phi_i(G_s) = \phi_j(G_s)$.

Therefore, there is $i_0$ such that for all $i \geq i_0$ and all $s \in S$, we have $\phi_i(G_s) = \phi_{i_0}(G_s)$. Let $\delta_i \coloneq \phi_i^{-1} \phi_{i_0} \in \ASpe(G)$. Thus, $\delta_i(G_s) = G_s$ for all $s \in S$. Lemma \ref{lem:comp_witness_fixing} implies $\delta_i(G_v) = G_v$ for all $v \in V(\Gamma)$ and $i \geq i_0$. Thus, $\phi_i(G_v) = \phi_{i_0}(G_v)$ for all $v \in V(\Gamma)$. Hence
$\phi_i^{-1}\phi_{i_0}\in\FAut(G)$,
so the tail of the net $(\phi_i)$ lies in the closed coset
$\phi_{i_0}\FAut(G)$. Therefore
$\phi\in\phi_{i_0}\FAut(G)\subseteq\PartConj(G)\FAut(G)$,
proving the first assertion.

For the second assertion, if moreover
$\phi_i\in\PartConj(G)\FInn(G)$ and $\phi_i\to\phi\in\ISpe(G)$, then
$\phi_i^{-1}\phi_{i_0}\in\FAut(G)\cap\ISpe(G)=\FInn(G)$. Hence the tail
lies in the relatively closed coset $\phi_{i_0}\FInn(G)$, and therefore
$\phi\in\phi_{i_0}\FInn(G)\subseteq\PartConj(G)\FInn(G)$.
\end{proof}

\begin{corollary}\label{cor:product_equality}
    Let $G = \Gamma\cG$. If $\Gamma$ has a finite component witness set, then \[
 \ASpe(G)=\FAut(G)\PartConjsc(G)
\]
and
\[
 \ISpe(G)=
 \FInn(G)\PartConjsc(G).
\]
\end{corollary}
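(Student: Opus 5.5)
The plan is to combine the density statement of Theorem~\ref{thm:partial-density} with the closedness statement of Proposition~\ref{prop:product_closed}, and then tidy up the order of the factors and the single-component refinement.

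First, recall that when $\Gamma$ has a finite component witness set $S$, each graph $\Gamma\setminus N^*(u)$ has at most $|S|$ connected components. Hence every partial conjugation is a finite product of single-component ones, so $\PartConj(G)=\PartConjsc(G)$. Also, $\FAut(G)$ normalizes $\PartConj(G)$, so
\[
\PartConj(G)\FAut(G)=\FAut(G)\PartConj(G),
\]
and likewise with $\FInn(G)$ in place of $\FAut(G)$. Both products are subgroups of $\Aut(G)$.

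For the first equality, the inclusion $\supseteq$ is immediate, since $\FAut(G)\leq\ASpe(G)$ and $\PartConjsc(G)\leq\ISpe(G)\leq\ASpe(G)$. For $\subseteq$, Theorem~\ref{thm:partial-density} gives
\[
\ASpe(G)=\cl{\FAutfin(G)\PartConjsc(G)}\subseteq \cl{\FAut(G)\PartConj(G)}.
\]
By Proposition~\ref{prop:product_closed}(1), the set $\FAut(G)\PartConj(G)=\PartConj(G)\FAut(G)$ is closed in $\Aut(G)$. So this closure equals $\FAut(G)\PartConj(G)=\FAut(G)\PartConjsc(G)$.

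For the second equality, argue in the same way inside $\ISpe(G)$ with its relative topology. Theorem~\ref{thm:partial-density} gives that $\ISpe(G)$ is the relative closure of $\FInnfin(G)\PartConjsc(G)$. This relative closure is contained in the relative closure of $\FInn(G)\PartConj(G)$. That set is relatively closed in $\ISpe(G)$ by Proposition~\ref{prop:product_closed}(2). The reverse inclusion holds because $\FInn(G)$ and partial conjugations lie in $\ISpe(G)$.

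There is no real obstacle here. The only point needing care is the bookkeeping that identifies $\PartConj(G)$ with $\PartConjsc(G)$ and swaps the order of the two factors, which uses the normalization formula $\alpha\,\pi(u,s,C)\,\alpha^{-1}=\pi(u,\alpha(s),C)$.
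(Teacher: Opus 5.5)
Your proposal is correct and takes essentially the same route as the paper: density from Theorem~\ref{thm:partial-density}, closedness from Proposition~\ref{prop:product_closed}, and the identification $\PartConj(G)=\PartConjsc(G)$ under a finite component witness set. Your remark that $\FAut(G)$ normalizes $\PartConj(G)$, so the two factors can be written in either order, makes explicit a step the paper leaves implicit.
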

\begin{proof}
    For any graph $\Gamma$, we have
\[
 \FAutfin(G)\PartConjsc(G) \leq \FAut(G)\PartConjsc(G) \leq \ASpe(G).
\]
When $\Gamma$ has a finite component witness set, taking closures and applying Proposition \ref{prop:product_closed}, Theorem \ref{thm:special-closedness}, Theorem \ref{thm:partial-density}, and $\PartConjsc(G) = \PartConj(G)$  yields $\FAut(G)\PartConjsc(G) = \ASpe(G)$.

    Similarly, we have
\[
 \FInnfin(G)\PartConjsc(G) \leq \FInn(G)\PartConjsc(G) \leq \ISpe(G).
\]
When $\Gamma$ has a finite component witness set, taking relative closures and applying Proposition \ref{prop:product_closed}, Theorem \ref{thm:special-closedness}, Theorem \ref{thm:partial-density}, and $\PartConjsc(G) = \PartConj(G)$  yields $\FInn(G)\PartConjsc(G) = \ISpe(G)$.
\end{proof}
\begin{lemma}\label{lem:inn-discrete-implies-fcws}
  If $\PartConj(G)$ is discrete in $\Aut(G)$, then $\Gamma$ has a finite component witness set.
\end{lemma}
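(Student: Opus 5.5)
Discreteness of $\PartConj(G)$ gives a finite set $F\subseteq G$ whose pointwise stabilizer in $\PartConj(G)$ is trivial. Put $S=\supp(F)$, a finite set of vertices. The plan is to show that $S$, or a finite enlargement of it, is a component witness set. The natural test automorphisms are the single-component partial conjugations.

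Fix $u\in V(\Gamma)$ and a connected component $C$ of $\Gamma\setminus N^*(u)$, and suppose $C\cap S=\varnothing$. Choose $1\neq s\in G_u$ and set $\pi=\pi(u,s,C)$. Every $f\in F$ is a product of syllables from vertex groups $G_w$ with $w\in S$. Since no such $w$ lies in $C$, $\pi$ fixes each such syllable, and hence fixes $F$ pointwise. So $\pi=1$.

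On the other hand, $\pi$ is non-trivial. Pick $x\in C$ and $1\neq g\in G_x$. Since $u\notin N^*(x)$, the element $sgs^{-1}$ is reduced of length three, so $\pi(g)=sgs^{-1}\neq g$. This contradiction shows that $C\cap S\neq\varnothing$ for every $u$ and every component $C$. Thus $S$ itself is a finite component witness set, and no enlargement is needed.

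The only point requiring care is that $\pi$ fixes $F$ pointwise. This follows because $\pi$ acts as the identity on every vertex group $G_w$ with $w\notin C$, and every element of $F$ is a word in such vertex groups. I do not expect a real obstacle. The argument uses only the definitions and normal forms, and it does not need the trivial-center subtleties that appeared in Lemma~\ref{lem:inn-discrete-implies-fsb}.
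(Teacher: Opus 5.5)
Your proof is correct and follows essentially the same route as the paper's: take $S=\supp(F)$ for a finite $F$ whose pointwise stabilizer in $\PartConj(G)$ is trivial. If some component $C$ of $\Gamma\setminus N^*(u)$ missed $S$, then the single-component partial conjugation $\pi(u,s,C)$ would be a non-trivial element fixing $F$ pointwise, which is a contradiction.
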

\begin{proof}
Since $\PartConj(G)$ is discrete, there is a finite set $F\subseteq G$
whose pointwise stabilizer in $\PartConj(G)$ is trivial. Set
$S=\supp(F)$,
which is finite. We claim that $S$ is a component witness set.

Otherwise, there are $u\in V(\Gamma)$ and a connected component $C$ of
$\Gamma\setminus N^*(u)$ such that $C\cap S=\varnothing$. Choose
$1\neq a\in G_u$ and consider the partial conjugation
$\pi=\pi(u,a,C)$.
This partial conjugation is non-trivial: if $v\in C$ and
$1\neq x\in G_v$, then $u$ and $v$ are non-adjacent, and hence
$\pi(x)=axa^{-1}\neq x$.
On the other hand, since $C\cap\supp(F)=\varnothing$, the automorphism
$\pi$ fixes every syllable occurring in every element of $F$, and
therefore fixes $F$ pointwise. This contradicts the choice of $F$.
Thus $S$ is a finite component witness set.
\end{proof}

\begin{theorem}\label{thm:pinn-discrete-general}
The following are equivalent.
\begin{enumerate}[(1)]
\item $\PartConj(G)\FInn(G)$ is discrete in $\Aut(G)$.
\item $\Gamma$ has a finite component witness set, every $\Inn(G_v)$ is discrete in
$\Aut(G_v)$, and $\Inn(G_v)=1$ for all but finitely many vertices $v$.
\end{enumerate}
\end{theorem}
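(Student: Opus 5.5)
The plan is to follow the proof of Theorem~\ref{thm:pinn-discrete} step by step. Lemma~\ref{lem:comp_witness_fixing} will play the role that the star-base computation $g\in G_{Z_\Gamma}$ played there.

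For (1)$\Rightarrow$(2), I would argue as follows. Subgroups of a discrete group are discrete, so both $\PartConj(G)$ and $\FInn(G)$ are discrete in $\Aut(G)$. Lemma~\ref{lem:inn-discrete-implies-fcws} applied to $\PartConj(G)$ yields a finite component witness set. Lemma~\ref{lem:factorwise-product-topology} identifies $\FInn(G)$ topologically with $\prod_v\Inn(G_v)$. Lemma~\ref{lem:product-discrete} then says that every $\Inn(G_v)$ is discrete and all but finitely many are trivial.

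For (2)$\Rightarrow$(1), let $S$ be a finite component witness set and choose $1\neq x_s\in G_s$ for each $s\in S$. Let $W=\{v:\Inn(G_v)\neq1\}$, which is finite. For each $w\in W$, pick a finite $F_w\subseteq G_w$ whose pointwise stabilizer in $\Inn(G_w)$ is trivial. Set $F=\{x_s:s\in S\}\cup\bigcup_{w\in W}F_w$; the goal is to show that the pointwise stabilizer of $F$ in $\PartConj(G)\FInn(G)$ is trivial.

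Take $\phi$ in this group fixing $F$. Since $\phi\in\ISpe(G)\leq\ASpe(G)$, each $\phi(G_s)$ is a conjugate of $G_s$ containing $\phi(x_s)=x_s\neq1$. Lemma~\ref{lem:vertex-conjugates} therefore gives $\phi(G_s)=G_s$ for all $s\in S$. Lemma~\ref{lem:comp_witness_fixing} then gives $\phi\in\FAut(G)$, and \eqref{eq:pinn-intersection} gives $\phi\in\FAut(G)\cap\ISpe(G)=\FInn(G)$.

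Now I would read off coordinates using \eqref{eq:paut-products}. For $w\in W$, the $w$-coordinate is an inner automorphism of $G_w$ fixing $F_w$, hence trivial. For $v\notin W$, the coordinate lies in $\Inn(G_v)=1$. Hence $\phi=1$, so the identity is isolated and the group is discrete.

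The step most worth checking is the application of Lemma~\ref{lem:comp_witness_fixing}. It only requires $\phi\in\ASpe(G)$, which holds because partial conjugations and factorwise inner automorphisms lie in $\ISpe(G)$. Once that is confirmed, nothing beyond the star-connected argument is needed.
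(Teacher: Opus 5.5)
Your proposal is correct and follows the paper's proof essentially verbatim, with Lemma~\ref{lem:comp_witness_fixing} taking over the role of the star-base computation. The only difference is cosmetic: the paper writes $\phi=\alpha\tau$ and applies Lemmas~\ref{lem:vertex-conjugates} and~\ref{lem:comp_witness_fixing} to the factor $\alpha\in\PartConj(G)$, whereas you apply them directly to $\phi\in\ISpe(G)$, which works equally well.
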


\begin{proof}
Assume (1).  Both $\PartConj(G)$ and $\FInn(G)$ are subgroups of the discrete
group $\PartConj(G)\FInn(G)$, hence are discrete.  By
Lemma~\ref{lem:inn-discrete-implies-fcws}, $\Gamma$ has a finite component witness set.
By Lemma~\ref{lem:factorwise-product-topology} and
Lemma~\ref{lem:product-discrete}, every $\Inn(G_v)$ is discrete and all but
finitely many of them are trivial.  This proves (2).

Conversely, assume (2).  Let $S$ be a finite component witness set, and choose
$1\neq x_s\in G_s$ for every $s\in S$.  Let
$W=\{v\in V(\Gamma):\Inn(G_v)\neq1\}$;
this is finite.  For every $w\in W$, discreteness of $\Inn(G_w)$ gives a
finite set $F_w\subseteq G_w$ whose pointwise stabilizer in $\Inn(G_w)$ is
trivial.  Put $$F=\{x_s:s\in S\}\cup\bigcup_{w\in W}F_w.$$
We prove that the pointwise stabilizer of $F$ in
$\PartConj(G)\FInn(G)$ is trivial.

Let $\phi=\alpha\tau$, with $\alpha \in \PartConj(G)$ and $\tau\in\FInn(G)$, and suppose that $\phi$
fixes $F$ pointwise.  For every $s\in S$, the non-trivial element
$\tau(x_s)$ belongs to $G_s$, while $\alpha(\tau(x_s))=x_s\in G_s$.
Thus $\alpha(G_s)\cap G_s\neq1$.  Since $\alpha(G_s)$ is a conjugate
of $G_s$, Lemma~\ref{lem:vertex-conjugates} gives
$\alpha(G_s)=G_s$.  Lemma~\ref{lem:comp_witness_fixing} now implies
$\alpha\in\FAut(G)$.  Since also
$\alpha\in\PartConj(G)\leq\ISpe(G)$,
\eqref{eq:pinn-intersection} gives $\alpha\in\FInn(G)$.  Hence
$\phi\in\FInn(G)$.

For every $w\in W$, the $w$-coordinate of $\phi$ fixes $F_w$ and is
therefore the identity.  At every vertex outside $W$, the local inner
automorphism group is trivial.  Hence $\phi=1$.  This proves (1).
\end{proof}

Analogously, we prove the following.

\begin{theorem}\label{thm:paut-discrete-general}
The following are equivalent.
\begin{enumerate}[(1)]
\item $\PartConj(G)\FAut(G)$ is discrete in $\Aut(G)$.
\item $\Gamma$ has a finite component witness set, every $\Aut(G_v)$ is discrete in the
pointwise-convergence topology, and $\Aut(G_v)=1$ for all but finitely many
vertices $v$.
\end{enumerate}
\end{theorem}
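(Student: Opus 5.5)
The argument follows the proof of Theorem~\ref{thm:pinn-discrete-general}, with $\FAut(G)$ in place of $\FInn(G)$ and $\Aut(G_v)$ in place of $\Inn(G_v)$.

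\emph{(1)$\Rightarrow$(2).} Assume $\PartConj(G)\FAut(G)$ is discrete. Its subgroups $\PartConj(G)$ and $\FAut(G)$ are then discrete in the subspace topology. Lemma~\ref{lem:inn-discrete-implies-fcws} applied to $\PartConj(G)$ gives a finite component witness set. By Lemma~\ref{lem:factorwise-product-topology}, $\FAut(G)$ is homeomorphic to $\prod_v\Aut(G_v)$ with the product topology. Since every factor is nonempty, Lemma~\ref{lem:product-discrete} shows that each $\Aut(G_v)$ is discrete and that $\Aut(G_v)=1$ for all but finitely many $v$.

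\emph{(2)$\Rightarrow$(1).} Let $S$ be a finite component witness set and choose $1\neq x_s\in G_s$ for each $s\in S$. Let $W=\{v:\Aut(G_v)\neq1\}$, which is finite by hypothesis. For each $w\in W$, discreteness of $\Aut(G_w)$ gives a finite $F_w\subseteq G_w$ whose pointwise stabilizer in $\Aut(G_w)$ is trivial. Put $F=\{x_s:s\in S\}\cup\bigcup_{w\in W}F_w$.

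Let $\phi=\alpha\tau$ with $\alpha\in\PartConj(G)$ and $\tau\in\FAut(G)$, and suppose $\phi$ fixes $F$ pointwise. For each $s\in S$, the element $\tau(x_s)$ is a nontrivial element of $G_s$, and $\alpha(\tau(x_s))=x_s\in G_s$. Hence $\alpha(G_s)\cap G_s\neq1$. Since $\alpha\in\PartConj(G)\leq\ISpe(G)$, the subgroup $\alpha(G_s)$ is a conjugate of $G_s$, so Lemma~\ref{lem:vertex-conjugates} gives $\alpha(G_s)=G_s$. Lemma~\ref{lem:comp_witness_fixing} then yields $\alpha\in\FAut(G)$, and therefore $\phi\in\FAut(G)$.

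Using $\FAut(G)\cong\prod_v\Aut(G_v)$, the $w$-coordinate of $\phi$ fixes $F_w$ for each $w\in W$, so it is trivial. Every coordinate outside $W$ lies in a trivial group. Hence $\phi=1$, the set $F$ isolates the identity, and the product group is discrete.

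There is no real obstacle here. The only point needing care is the step $\alpha\in\FAut(G)$. It differs from the $\FInn$ case only in that the final appeal to \eqref{eq:pinn-intersection} is unnecessary: $\alpha\tau$ already lies in $\FAut(G)$ once $\alpha$ does.
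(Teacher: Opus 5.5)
Your proof is correct and follows the paper's argument essentially line for line. You are also right that the paper's extra appeal to \eqref{eq:pinn-intersection} (passing through $\alpha\in\FInn(G)\leq\FAut(G)$) is unnecessary here: $\alpha\in\FAut(G)$ from Lemma~\ref{lem:comp_witness_fixing} already places $\phi=\alpha\tau$ in $\FAut(G)$.
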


\begin{proof}
Assume (1). Both $\PartConj(G)$ and $\FAut(G)$ are subgroups of the discrete
group $\PartConj(G)\FAut(G)$, hence are discrete.  By
Lemma~\ref{lem:inn-discrete-implies-fcws}, $\Gamma$ has a finite component witness set.
By Lemma~\ref{lem:factorwise-product-topology} and
Lemma~\ref{lem:product-discrete}, every $\Aut(G_v)$ is discrete and all but
finitely many of them are trivial.  This proves (2).

Conversely, assume (2).  Let $S$ be a finite component witness set, and
choose $1\neq x_s\in G_s$ for every $s\in S$.  Let
$W=\{v\in V(\Gamma):\Aut(G_v)\neq1\}$;
this is finite.  For every $w\in W$, discreteness of $\Aut(G_w)$ gives a
finite set $F_w\subseteq G_w$ whose pointwise stabilizer in $\Aut(G_w)$ is
trivial.  Put $$F=\{x_s:s\in S\}\cup\bigcup_{w\in W}F_w.$$
We prove that the pointwise stabilizer of $F$ in
$\PartConj(G)\FAut(G)$ is trivial.

Let $\phi=\alpha\tau$, with $\alpha\in\PartConj(G)$ and
$\tau\in\FAut(G)$, and suppose that $\phi$ fixes $F$ pointwise.  For every
$s\in S$, the non-trivial element $\tau(x_s)$ belongs to $G_s$, while
$\alpha(\tau(x_s))=x_s\in G_s$.
Hence $\alpha(G_s)\cap G_s\neq1$, and
Lemma~\ref{lem:vertex-conjugates} gives $\alpha(G_s)=G_s$.
Lemma~\ref{lem:comp_witness_fixing} now implies
$\alpha\in\FAut(G)$.  Since
$\alpha\in\PartConj(G)\leq\ISpe(G)$,
\eqref{eq:pinn-intersection} gives
$\alpha\in\FInn(G)\leq\FAut(G)$.  Therefore $\phi\in\FAut(G)$.
Its $w$-coordinate fixes $F_w$ for every $w\in W$ and is therefore the
identity; all remaining coordinate groups are already trivial.  Thus
$\phi=1$, and the product group is discrete.
\end{proof}

\section{Special automorphisms of graph products of abelian groups}
\label{sec:special-abelian}

Throughout this section, every vertex group is a non-trivial abelian group.
Since every $G_v$ is abelian, $\Inn(G_v)=1$, and therefore
\begin{equation}\label{eq:abelian-finn-trivial}
 \FInn(G)=1.
\end{equation}

\subsection{A topological semidirect decomposition}

\begin{theorem}\label{thm:abelian-splitting}
There is a canonical continuous homomorphism
\[
 \Theta:\ASpe(G)\longrightarrow\FAut(G)
\]
which restricts to the identity on $\FAut(G)$ and has kernel $\ISpe(G)$.
Consequently, multiplication induces an isomorphism of topological groups
\[
 \ISpe(G)\rtimes\FAut(G)\xrightarrow{\ \cong\ }\ASpe(G).
\]
In particular,
\[
 \ASpe(G)/\ISpe(G)\cong
 \FAut(G)\cong\prod_{v\in V(\Gamma)}\Aut(G_v)
\]
as topological groups.
\end{theorem}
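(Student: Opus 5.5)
We define $\Theta$ vertex by vertex, transporting each image vertex group back to $G_v$. Fix $\sigma\in\ASpe(G)$ and $v\in V(\Gamma)$, and choose $p_v\in G$ with $\sigma(G_v)=p_vG_vp_v^{-1}$. Set
\[
 \theta_v(\sigma)=\ad(p_v^{-1})\circ\sigma|_{G_v}\in\Aut(G_v).
\]
First we check that this does not depend on $p_v$. If $p'_v$ is another choice, then $p_v^{-1}p'_v$ normalizes $G_v$, so by Fact~\ref{fact:parabolic-basics}(2) it lies in $G_{N^*(v)}=G_v\times G_{N(v)}$. Writing $p_v^{-1}p'_v=ab$ with $a\in G_v$ and $b\in G_{N(v)}$, conjugation by $ab$ acts on $G_v$ as $\ad(a)$. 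This is trivial because $G_v$ is abelian, which is exactly where \eqref{eq:abelian-finn-trivial} enters. We then let $\Theta(\sigma)\in\FAut(G)$ be the factorwise extension of $(\theta_v(\sigma))_v$ given by \eqref{eq:paut-products}.

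Next we prove the algebraic properties, each of which reduces to well-definedness.
\begin{itemize}
\item For $\sigma\in\FAut(G)$ we may take $p_v=1$, so $\Theta(\sigma)=\sigma$.
\item For the homomorphism property, take $\sigma,\tau$ with conjugators $p_v$ for $\sigma$ and $q_v$ for $\tau$. Then $\sigma(q_v)p_v$ is a valid conjugator for $\sigma\tau$ at $v$, and
\[
 \ad\bigl((\sigma(q_v)p_v)^{-1}\bigr)\circ\sigma\tau|_{G_v}
 =\ad(p_v^{-1})\circ\sigma|_{G_v}\circ\ad(q_v^{-1})\circ\tau|_{G_v}.
\]
Since $\ad(q_v^{-1})\tau|_{G_v}$ maps $G_v$ into $G_v$, this equals $\theta_v(\sigma)\theta_v(\tau)$.
\item For the kernel, $\Theta(\sigma)=1$ means exactly that $\sigma|_{G_v}=\ad(p_v)|_{G_v}$ for every $v$, that is, $\sigma\in\ISpe(G)$.
\end{itemize}
A retraction of $\ASpe(G)$ onto the subgroup $\FAut(G)$ with kernel $\ISpe(G)$ gives the abstract splitting $\ASpe(G)=\ISpe(G)\rtimes\FAut(G)$. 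The multiplication map is a bijection with inverse $\sigma\mapsto\bigl(\sigma\Theta(\sigma)^{-1},\Theta(\sigma)\bigr)$.

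\textbf{Main obstacle: continuity of $\Theta$.} By Lemma~\ref{lem:factorwise-product-topology}, $\FAut(G)$ carries the product topology, so it suffices to show each $\theta_v$ is continuous into $\Aut(G_v)$. Fix $\sigma$, a vertex $v$, and a finite set $E\subseteq G_v$; we may assume $E$ contains some $x\neq1$. Suppose $\sigma'$ agrees with $\sigma$ on $E$. Then $\sigma'(x)=\sigma(x)$ is a non-trivial element of both $\sigma'(G_v)$ and $\sigma(G_v)$, so Lemma~\ref{lem:vertex-conjugates} gives $\sigma'(G_v)=\sigma(G_v)$. We may therefore use the same $p_v$ for $\sigma'$, and then $\theta_v(\sigma')$ and $\theta_v(\sigma)$ agree on $E$. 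This argument is the same as the one in Theorem~\ref{thm:special-closedness}.

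It follows that the inverse of the multiplication map is continuous. Multiplication itself is continuous, so the map is a homeomorphism and an isomorphism of topological groups. Finally, $\Theta$ is a continuous surjection that is the identity on $\FAut(G)$, so it induces $\ASpe(G)/\ISpe(G)\cong\FAut(G)$ topologically. The induced map is continuous, and its inverse is the composite $\FAut(G)\hookrightarrow\ASpe(G)\to\ASpe(G)/\ISpe(G)$, which is continuous. Combined with Lemma~\ref{lem:factorwise-product-topology}, this gives $\FAut(G)\cong\prod_v\Aut(G_v)$.
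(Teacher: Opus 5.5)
Your proposal is correct and follows the paper's proof essentially step for step. It uses the same conjugator-transport definition of $\Theta_v$, well-definedness via $N_G(G_v)=G_{N^*(v)}$ together with abelianness, the conjugator $\sigma(q_v)p_v$ for the homomorphism property, and coordinatewise continuity via Lemma~\ref{lem:vertex-conjugates}. The only differences are cosmetic: you argue continuity with basic neighborhoods rather than nets, and you spell out the quotient isomorphism that the paper leaves implicit.
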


\begin{proof}
Let $\sigma\in\ASpe(G)$.  For each $v$, choose $p_v\in G$ such that
$\sigma(G_v)=p_vG_vp_v^{-1}$, and define
$\Theta_v(\sigma)=\ad(p_v^{-1})\circ\sigma|_{G_v}\in\Aut(G_v)$.
This does not depend on the choice of $p_v$.  Indeed, if $p_v'$ is another
choice, then $(p_v')^{-1}p_v\in N_G(G_v)=G_{N^*(v)}$.  Conjugation by an
element of $G_{N^*(v)}$ induces on $G_v$ the inner automorphism determined by
its $G_v$-coordinate.  Since $G_v$ is abelian, this induced automorphism is
the identity.

Let $\sigma,\tau\in\ASpe(G)$, and choose conjugators $p_v,q_v$ for them,
respectively. 

For $x\in G_v$ we have $\tau(x)=q_v\Theta_v(\tau)(x)q_v^{-1}$
and hence
\[
 (\sigma\tau)(x)=
 \sigma(q_v)p_v
 \Theta_v(\sigma)\bigl(\Theta_v(\tau)(x)\bigr)
 p_v^{-1}\sigma(q_v)^{-1}.
\]
Thus $\sigma(q_v)p_v$ is a conjugator for $(\sigma\tau)(G_v)$, and the
well-definedness just proved gives
$\Theta_v(\sigma\tau)=\Theta_v(\sigma)\Theta_v(\tau)$.
The family $(\Theta_v(\sigma))_v$ therefore defines a homomorphism
$\Theta:\ASpe(G)\longrightarrow\prod_v\Aut(G_v)=\FAut(G)$.
If $\sigma\in\FAut(G)$, one can take every $p_v=1$, so
$\Theta(\sigma)=\sigma$.  Moreover, $\Theta(\sigma)=1$ exactly when
$\sigma|_{G_v}=\ad(p_v)|_{G_v}$ for every $v$, which is precisely the
condition $\sigma\in\ISpe(G)$.  This proves exactness and the algebraic
semidirect decomposition.

It remains to check the topology.  We show that every coordinate
$\Theta_v$ is continuous.  Let a net $(\sigma_i)$ in $\ASpe(G)$ converge to
$\sigma$, and choose $1\neq x\in G_v$.  Eventually
$\sigma_i(x)=\sigma(x)$.  This common non-trivial element belongs to both
$\sigma_i(G_v)$ and $\sigma(G_v)$, so
Lemma~\ref{lem:vertex-conjugates} gives $\sigma_i(G_v)=\sigma(G_v)$
for all sufficiently large $i$.  Fix a conjugator $p_v$ for
$\sigma(G_v)$.  For those indices it can also be used in the definition of
$\Theta_v(\sigma_i)$, and for every $y\in G_v$ we have eventually
$\Theta_v(\sigma_i)(y)=p_v^{-1}\sigma_i(y)p_v
=p_v^{-1}\sigma(y)p_v=\Theta_v(\sigma)(y)$.
Thus $\Theta_v$ is continuous in the pointwise-convergence topology, and
therefore so is $\Theta$ into the product.  By
Lemma~\ref{lem:factorwise-product-topology}, the canonical section
$\FAut(G)\hookrightarrow\ASpe(G)$ is a topological embedding.  Hence
\[
 \sigma\longmapsto
 \bigl(\sigma\Theta(\sigma)^{-1},\Theta(\sigma)\bigr)
\]
is a continuous inverse to multiplication
$\ISpe(G)\rtimes\FAut(G)\to\ASpe(G)$.
\end{proof}

\subsection{Right-angled Coxeter groups}

We now formulate our previous results in the setting of right-angled Coxeter groups. Let $W=W(\Gamma)$ be the graph product with $G_v=C_2$ for every $v$.  We
write $\Spe(W)$ for Tits' special automorphism group: the group of
$\alpha\in\Aut(W)$ such that every involution of $W$ is sent to a conjugate
of itself.

\begin{proposition}\label{prop:racg-identification}
For every right-angled Coxeter group,
\[
 \FAut(W)=\FInn(W)=1,
 \qquad
 \Spe(W)=\ASpe(W)=\ISpe(W).
\]
\end{proposition}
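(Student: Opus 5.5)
The plan is to check the equalities one at a time, starting from the fact that every vertex group is $C_2$.

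\emph{Factorwise groups.} Since $\Aut(C_2)=1$, the identification \eqref{eq:paut-products} gives $\FAut(W)\cong\prod_v\Aut(C_2)=1$. Because $\FInn(W)\leq\FAut(W)$, we also get $\FInn(W)=1$.

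\emph{$\ASpe(W)=\ISpe(W)$.} The inclusion $\ISpe(W)\leq\ASpe(W)$ always holds. Conversely, let $\sigma\in\ASpe(W)$ with $\sigma(G_v)=p_vG_vp_v^{-1}$, and write $G_v=\{1,t_v\}$. Then $\sigma(t_v)$ is the unique non-trivial element of $p_vG_vp_v^{-1}$, namely $p_vt_vp_v^{-1}$. Hence $\sigma|_{G_v}=\ad(p_v)|_{G_v}$, and so $\sigma\in\ISpe(W)$. Alternatively, this follows from the kernel description in Theorem~\ref{thm:abelian-splitting} together with $\FAut(W)=1$.

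\emph{$\Spe(W)\leq\ISpe(W)$.} Each generator $t_v$ is an involution. An element of $\Spe(W)$ therefore sends $t_v$ to some conjugate $p_vt_vp_v^{-1}$, so it acts on $G_v$ as $\ad(p_v)$.

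\emph{$\ISpe(W)\leq\Spe(W)$.} This is the step needing real input. We must show that any $\sigma$ which conjugates each generator (by a conjugator that may depend on the generator) sends every involution of $W$ to a conjugate of itself. The standard fact I would invoke is that every involution of a right-angled Coxeter group is conjugate to the product $t_K=\prod_{k\in K}t_k$ over a finite clique $K$. This holds for arbitrary graphs: an involution generates a finite subgroup, which lies in a finite induced subgraph after retraction, and one then uses the finite-graph result there, or the known conjugacy of finite subgroups into spherical parabolics $G_K$. So it suffices to show that $\sigma(t_K)$ is conjugate to $t_K$.

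\emph{Cliques.} Here I would use Theorem~\ref{thm:partial-density}: $\ISpe(W)$ is the relative closure of $\PartConjsc(W)$, since $\FInnfin(W)=1$. Approximating on the single element $t_K$, there is a product of partial conjugations $\widetilde\sigma$ with $\widetilde\sigma(t_K)=\sigma(t_K)$. Each partial conjugation $\pi(u,s,C)$ sends $t_K$ to a conjugate of itself. To see this, note that the vertices of $K$ outside $N^*(u)$ lie in the same component $C'$, because they are pairwise adjacent. Thus either all of $K\setminus N^*(u)$ is conjugated by $s$ or none of it is. The vertices of $K\cap N^*(u)$ are either $u$ itself or commute with $s\in G_u$, so conjugation by $s$ fixes them. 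In either case $\pi(u,s,C)(t_K)$ equals $t_K$ or $st_Ks^{-1}$. Composing such maps preserves conjugacy classes of cliques, since each map sends a conjugate $gt_Kg^{-1}$ to $\pi(g)\pi(t_K)\pi(g)^{-1}$, which is again a conjugate of $t_K$. Hence $\sigma(t_K)$ is conjugate to $t_K$, and so $\sigma\in\Spe(W)$.

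The main obstacle is this last inclusion, specifically the normal form for involutions in arbitrary (possibly infinite) right-angled Coxeter groups. If needed, I would prove it by retracting to the finite graph $\supp$ of the involution and citing the classical finite case.
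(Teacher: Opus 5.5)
Your proof is correct. Your first three steps match the paper, which settles $\FAut(W)=\FInn(W)=1$, $\ASpe(W)=\ISpe(W)$ and $\Spe(W)\leq\ASpe(W)$ in one line. Your argument for the remaining inclusion $\ISpe(W)\leq\Spe(W)$, however, takes a genuinely different route.

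The paper invokes Tits' decomposition $\Aut(W)=\Spe(W)\rtimes F(\Gamma)$. It then observes that the $F(\Gamma)$-coordinate is detected by the action on $W_{\mathrm{ab}}$, where an element of $\ASpe(W)$ acts trivially. You instead reduce to the clique involutions $t_K$. You then use Theorem~\ref{thm:partial-density}, with $\FInnfin(W)=1$, to replace $\sigma$ on the single element $t_K$ by a product of partial conjugations. Each of these visibly sends $t_K$ to $t_K$ or to $st_Ks^{-1}$, because $K\setminus N^*(u)$ lies in a single component. There is no circularity, since Theorem~\ref{thm:partial-density} does not use this proposition.

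The trade-off is this. The paper's version is two lines but rests entirely on Tits' structure theorem. Yours stays inside the paper's own machinery, and so, through Theorem~\ref{thm:partial-density}, ultimately rests on Genevois--Martin, together with the classical fact that every involution is conjugate to some $t_K$.

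Once you have that involution classification, the density step is actually unnecessary. An element $\sigma\in\ASpe(W)$ induces the identity on $W_{\mathrm{ab}}\cong\bigoplus_v\mathbb{F}_2$. If $x$ is conjugate to $t_K$, then $\sigma(x)$ is an involution, hence conjugate to some $t_L$. Comparing images in $W_{\mathrm{ab}}$, where $t_K$ maps to the indicator vector of $K$, forces $L=K$. This is essentially the mechanism packaged in the paper's appeal to Tits.
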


\begin{proof}
Since $\Aut(C_2)=1$, we have $\FAut(W)=\FInn(W)=1$ and
$\ASpe(W)=\ISpe(W)$, while $\Spe(W)\leq\ASpe(W)$ is immediate.  Conversely,
in Tits' decomposition $\Aut(W)=\Spe(W)\rtimes F(\Gamma)$
\cite{Tits1988}, the $F(\Gamma)$-coordinate is determined by the images of
the standard generators in $W_{\mathrm{ab}}$; an element of $\ASpe(W)$
sends every generator to a conjugate of itself and therefore has trivial
$F(\Gamma)$-coordinate.
\end{proof}

Although assertion~(1) and the ``if'' direction of~(2) are new, all the remaining assertions can be extracted from \cite{PaoliniRabideau2026}.  For completeness, we state the full set of results on right-angled Coxeter groups here.

\begin{theorem}\label{thm:racg-density-equality}
Let $W=W(\Gamma)$ be a right-angled Coxeter group.
\begin{enumerate}[(1)]
\item For every defining graph, we have $
 \Spe(W)=\cl{\PartConj(W)}$.
\item The graph $\Gamma$ is star-connected if and only if $
 \Spe(W)=\cl{\Inn(W)}$.
\item $\Spe(W)=\Inn(W)$ if and only if $\Gamma$ is star-connected
and $\Inn(W)$ is closed in $\Aut(W)$.
\item If $\Gamma$ has a finite star base, then $\Spe(W)=\Inn(W)$ if
and only if $\Gamma$ is star-connected.
\item If $W$ is countable, then $\Spe(W)=\Inn(W)$ if and only if $\Gamma$ is
star-connected and has a finite star base.
\end{enumerate}
\end{theorem}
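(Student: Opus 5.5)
The plan is to specialize the general results of Sections~\ref{sec:special}--\ref{sec:finite-star-base} using Proposition~\ref{prop:racg-identification}. That proposition gives $\FAut(W)=\FInn(W)=1$ and $\Spe(W)=\ASpe(W)=\ISpe(W)$, and $\Spe(W)$ is closed in $\Aut(W)$ by Theorem~\ref{thm:special-closedness}(1).

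\emph{Items (1) and (2).} For (1), Theorem~\ref{thm:partial-density} gives $\Spe(W)=\cl{\PartConjsc(W)}$, since $\FAutfin(W)=1$. The displayed chain before that theorem gives $\cl{\PartConjsc(W)}=\cl{\PartConj(W)}$. For (2), apply Theorem~\ref{thm:exact-special}, equivalence (a)$\Leftrightarrow$(b), with $\FAut(W)=1$. This gives: $\Gamma$ is star-connected if and only if $\Spe(W)=\cl{\Inn(W)}$.

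\emph{Items (3) and (4).} For (3), suppose first $\Spe(W)=\Inn(W)$. Then $\Inn(W)$ is closed, because $\Spe(W)$ is. Also $\Spe(W)=\cl{\Inn(W)}$, so $\Gamma$ is star-connected by (2). Conversely, star-connectedness gives $\Spe(W)=\cl{\Inn(W)}$ by (2), and closedness of $\Inn(W)$ then gives equality. Item (4) is the second half of Theorem~\ref{thm:exact-special} with $\FAut(W)=1$. Alternatively, Proposition~\ref{prop:products-closed-fsb} shows that a finite star base makes $\Inn(W)$ closed, and then (3) applies.

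\emph{Item (5).} Assume $W$ is countable. If $\Gamma$ is star-connected and has a finite star base, apply (4). Conversely, suppose $\Spe(W)=\Inn(W)$. Star-connectedness follows from (3). Moreover $\Inn(W)$ is a countable closed subgroup of $\Aut(W)$, so it is discrete by the last assertion of Lemma~\ref{lem:inn-discrete-centralizer}. Lemma~\ref{lem:inn-discrete-implies-fsb} then yields a finite star base.

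The only step needing care is (5). There one must notice that countability of $W$, via the Baire category argument in Lemma~\ref{lem:inn-discrete-centralizer}, converts closedness of $\Inn(W)$ into discreteness. Discreteness is exactly what Lemma~\ref{lem:inn-discrete-implies-fsb} requires; this lemma holds for reducible $\Gamma$ as well, so no irreducibility hypothesis is needed. Every other step is a direct substitution into earlier results.
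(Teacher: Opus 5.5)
Your proposal is correct and takes essentially the same route as the paper. Each item comes from substituting $\FAut(W)=\FInn(W)=1$ and $\Spe(W)=\ASpe(W)=\ISpe(W)$ (Proposition~\ref{prop:racg-identification}) into Theorems~\ref{thm:partial-density}, \ref{thm:exact-special} and~\ref{thm:special-closedness}, and item~(5) uses the same argument: Baire category makes the countable closed group $\Inn(W)$ discrete, and then Lemma~\ref{lem:inn-discrete-implies-fsb} applies. The only cosmetic difference is that in (5) you get star-connectedness directly from (3), whereas the paper first obtains the finite star base and then invokes Theorem~\ref{thm:exact-special}.
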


\begin{proof}
The first assertion is Theorem \ref{thm:partial-density} together with Proposition~\ref{prop:racg-identification}.

The second and fourth assertions are Theorem \ref{thm:exact-special} and Proposition~\ref{prop:racg-identification}.

The third assertion is Theorem \ref{thm:exact-special}, Theorem \ref{thm:special-closedness}, and Proposition~\ref{prop:racg-identification}.

Finally, if $W$ is countable, Theorem \ref{thm:exact-special} gives the 'if' direction. For the 'only if' direction, since $W$ is countable, $\Aut(W)$ is Polish. Thus, the Baire category theorem implies countable closed subgroups of $\Aut(W)$ are discrete. Hence, suppose that $\Spe(W)=\Inn(W)$. Since $W$ is countable, so is $\Inn(W)$; it is closed by Theorem~\ref{thm:special-closedness}. It is therefore discrete. Thus Lemma \ref{lem:inn-discrete-implies-fsb} implies $\Gamma$ has a finite star base, and then Theorem \ref{thm:exact-special} implies $\Gamma$ is star-connected.
\end{proof}

\end{document}